\numberwithin{equation}{section}
\newtheorem{theorem}{Theorem}[section]
\newtheorem{remark}[theorem]{Remark}
\newtheorem{lemma}[theorem]{Lemma}
\newtheorem{proposition}[theorem]{Proposition}
\newtheorem{corollary}[theorem]{Corollary}
\newtheorem{definition}[theorem]{Definition}
\newcommand{\C}{\mathbf{C}}
\newcommand{\D}{\mathbf{D}}
\newcommand{\E}{\mathbf{E}}
\newcommand{\h}{\mathbf{H}}
\newcommand{\N}{\mathbf{N}}
\newcommand{\p}{\mathbf{P}}
\newcommand{\Q}{\mathbf{Q}}
\newcommand{\R}{\mathbf{R}}
\newcommand{\Fh}{\mathfrak {h}}
\newcommand{\CA}{\mathcal {A}}
\newcommand{\CE}{\mathcal {E}}
\newcommand{\CF}{\mathcal {F}}
\newcommand{\CI}{\mathcal {I}}
\newcommand{\CR}{\mathcal {R}}
\newcommand{\CT}{\mathcal {T}}
\newcommand{\CV}{\mathcal {V}}
\newcommand{\CW}{\mathcal {W}}
\newcommand{\CZ}{\mathcal {Z}}
\newcommand{\CG}{\mathcal {G}}
\newcommand{\CH}{\mathcal {H}}
\newcommand{\SLE}{{\rm SLE}}
\newcommand{\CLE}{{\rm CLE}}
\newcommand{\im}{\mathrm{Im}}
\newcommand{\re}{\mathrm{Re}}
\newcommand{\cov}{\mathrm{cov}}
\newcommand{\wt}{\widetilde}
\newcommand{\wh}{\widehat}
\newcommand{\ol}{\overline}
\newcommand{\ul}{\underline}
\newcommand{\RPM}{\mathsf{RPM}}
\newcommand{\BPRPM}{\mathsf{BPRPM}}
\newcommand{\BES}{\mathrm{BES}}
\newcommand{\BESQ}{\mathrm{BESQ}}
\newcommand{\strip}{{\mathscr{S}}}
\newcommand{\qmeasure}[1]{\mu_{#1}}
\newcommand{\qbmeasure}[1]{\nu_{#1}}
\newcommand{\qnt}{{\mathfrak q}}
\newcommand{\lightcone}{{\mathbf L}}
\title[$\SLE_\kappa(\rho)$ processes in the light cone regime on Liouville quantum gravity]{$\SLE_{\kappa}(\rho)$ processes in the light cone regime on\\ Liouville quantum gravity}
\author{Konstantinos Kavvadias and Jason Miller}
\begin{document}

\begin{abstract}
We study the relationship between certain $\SLE_\kappa(\rho)$ processes, which are variants of the Schramm-Loewner evolution with parameter $\kappa$ in which one keeps track of an extra marked point, and Liouville quantum gravity (LQG).  These processes are defined whenever $\rho > -2-\kappa/2$ and in this work we will focus on the light cone regime, meaning that $\kappa \in (0,4)$ and $\max(\kappa/2-4,-2-\kappa/2) < \rho < -2$.  Such processes are self-intersecting even though ordinary $\SLE_\kappa$ curves are simple for $\kappa \in (0,4)$.  We show that such a process drawn on top of an independent $\sqrt{\kappa}$-LQG surface called a weight $(\rho+4)$-quantum wedge can be represented as a gluing of a pair of trees which are described by the two coordinate functions of a correlated $\alpha$-stable L\'evy process with $\alpha = 1-2(\rho+2)/\kappa$.  Combined with another work, this shows that bipolar oriented random planar maps with large faces can be identified in the scaling limit with an $\SLE_\kappa(\kappa-4)$ curve on an independent $\sqrt{\kappa}$-LQG surface for $\kappa \in (4/3,2)$.
\end{abstract}

\date{\today}
\maketitle

\setcounter{tocdepth}{1}
\tableofcontents

\parindent 0 pt
\setlength{\parskip}{0.20cm plus1mm minus1mm}

\section{Introduction}
\label{sec:introduction}

\subsection{Overview and setting}

The purpose of this work is to study the connection between Liouville quantum gravity (LQG) surfaces and the $\SLE_\kappa(\rho)$ processes, which are variants of the Schramm-Loewner evolution ($\SLE_\kappa$) \cite{schramm2000scaling} in which one keeps track of an extra marked point \cite[Section~8.3]{lsw2003restriction}.  These processes are defined whenever $\rho > -2-\kappa/2$ and we will focus on the so-called light cone regime \cite{ms2019lightcone} which means that $\kappa \in (0,4)$ and $\max(\kappa/2-4,-2-\kappa/2) < \rho < -2$.  These processes exhibit rather different behavior from ordinary $\SLE_\kappa$.  For example, they are self-intersecting even though $\kappa \in (0,4)$.  We will begin by giving a brief overview of the objects which will be relevant for this work.

Recall that an LQG surface is a random two-dimensional Riemannian manifold which is formally described by the metric tensor
\begin{equation}
\label{eqn:lqg_form}
e^{\gamma h(z)} (dx^2 + dy^2)
\end{equation}
where $z=x+iy$, $dx^2 + dy^2$ is the Euclidean metric on a domain $D \subseteq \C$, $h$ is (some form of) the Gaussian free field (GFF) on $D$, and $\gamma \in (0,2]$ is a parameter.  Since the GFF $h$ and its variants are random variables which live in the space of distributions rather than in a space of functions, some care is required in order to make sense of~\eqref{eqn:lqg_form}.  The volume form $\qmeasure{h}$ associated with~\eqref{eqn:lqg_form} was constructed in \cite{ds2011lqgkpz} as the limit as $\epsilon \to 0$ of
\begin{equation}
\label{eqn:volume_limit}
\epsilon^{\gamma^2/2} e^{\gamma h_\epsilon(z)} dx dy	
\end{equation}
where $dx dy$ denotes Lebesgue measure and $h_\epsilon(z)$ denotes the average of $h$ on the circle $\partial B(z,\epsilon)$.  The boundary length measure $\qbmeasure{h}$ on a linear segment $L$ was also constructed in \cite{ds2011lqgkpz} as the limit as $\epsilon \to 0$ of
\begin{equation}
\label{eqn:boundary_limit}
\epsilon^{\gamma^2/4} e^{\gamma h_\epsilon(x)/2} dx	
\end{equation}
where $dx$ denotes Lebesgue measure on $L$ and $h_\epsilon(x)$ denotes the average of $h$ on the semi-circle $D \cap \partial B(x,\epsilon)$.  The measures $\qmeasure{h}$, $\qbmeasure{h}$ can also be understood as Gaussian multiplicative chaos measures in the framework developed by Kahane \cite{k1985gmc}.  The metric (two-point distance function) associated with~\eqref{eqn:lqg_form} was later constructed in the case $\gamma=\sqrt{8/3}$ in \cite{ms2020qle1,ms2021qle2,ms2021qle3} and in \cite{dddf2020tightness,gm2021metric} in the case that $\gamma \in (0,2)$ where the latter works are based on a regularization procedure analogous to~\eqref{eqn:volume_limit} while the former takes a more indirect approach.  LQG surfaces are important in statistical mechanics, string theory, and conformal field theory.

The regularization procedure~\eqref{eqn:volume_limit}, \eqref{eqn:boundary_limit} implies that the following is true.  Suppose that $h$ is an instance of (some form of) the GFF on $D$, $\varphi \colon \wt{D} \to D$ is a conformal map, and
\begin{align}
\label{eqn:change_of_coordinates}
\wt{h} = h \circ \varphi + Q \log |\varphi'| \quad\text{where}\quad Q = \frac{2}{\gamma} + \frac{\gamma}{2}.
\end{align}
Then $\qmeasure{h}(\varphi(A)) = \qmeasure{\wt{h}}(A)$ for all $A \subseteq \wt{D}$ Borel.  We say that two pairs $(D,h)$, $(\wt{D},\wt{h})$ consisting of a planar domain and a field are equivalent as quantum surfaces if they are related as in~\eqref{eqn:change_of_coordinates} and a \emph{quantum surface} is an equivalence class under the equivalence relation defined by~\eqref{eqn:change_of_coordinates}.  This definition extends to quantum surfaces which have extra marked points where one requires the conformal map $\varphi$ to take the marked points associated with $\wt{h}$ to those associated with $h$.

There has been a substantial amount of work in recent years focused on developing connections between LQG and $\SLE$ and its variants.  This started with \cite{she2016zipper} in which it was shown that it is possible to conformally weld together two independent copies $\CW_1,\CW_2$ of a certain type of quantum surface called a (weight $2$) quantum wedge and obtain a quantum wedge  $\CW$ (but with weight $4$) where the welding interface $\eta$ is given by an $\SLE_\kappa$ curve which is independent of $\CW$.  Roughly, a quantum wedge is an infinite volume surface with two marked points (the ``origin'' and the ``infinity'' point) and two boundary rays which have locally finite quantum length.  Here, it is important that the parameters $\kappa$ and $\gamma$ are matched by $\kappa = \gamma^2 \in (0,4)$.  A number of other welding results of this type were proved in \cite{dms2014mating} with different types of quantum wedges and depending on the setting one obtains as the welding interface an $\SLE_\kappa(\rho_1;\rho_2)$ process with $\rho_1, \rho_2 > -2$.  Recall that the $\SLE_\kappa(\rho)$ processes are variants of $\SLE_\kappa$ in which one keeps track of extra marked points \cite[Section~8]{lsw2003restriction} called force points and the $\rho$'s are the weights, which determine how the force points affect the behavior of the curve.  By $\SLE_\kappa(\rho_1;\rho_2)$ we typically mean the case where there is a force point of weight $\rho_1$ (resp.\ $\rho_2$) located at $0_-$ (resp.\ $0_+$).  These processes can be boundary intersecting even though ordinary $\SLE_\kappa$ for $\kappa \in (0,4)$ does not intersect the boundary.  The threshold $-2$ is important because when $\rho > -2$ an $\SLE_\kappa(\rho)$ process is absolutely continuous with respect to an ordinary $\SLE_\kappa$ process when it is away from the boundary while this is not the case for $\rho \leq -2$.

It is also shown in \cite{dms2014mating} that if one welds the two boundary rays of a quantum wedge to each other then one obtains a quantum cone, which is a surface homeomorphic to $\C$, decorated by an independent whole-plane $\SLE_\kappa(\rho)$ process.  This ultimately leads to the proof that one can explore and encode an LQG surface by an appropriate space-filling version of $\SLE_{\kappa'}$ for $\kappa' > 4$ (throughout we will use the imaginary geometry \cite{ms2016imag1} convention that $\kappa \in (0,4)$, $\kappa'=16/\kappa > 4$) which can be thought of as the Peano curve which traces the interface between a pair of continuum random trees (CRTs) $\CT_1$, $\CT_2$ drawn in the plane and whose branches are themselves whole-plane $\SLE_\kappa(\rho)$ processes (with $\rho=2-\kappa$) \cite{ms2017imag4}.  This yields the so-called ``mating of trees'' representation of LQG, whose importance is that it provides one avenue for making connections between LQG, $\SLE$, and random planar maps ($\RPM$s).  The reason for this is that a number of $\RPM$ models can be encoded using so-called tree bijections and the mating of trees representation of LQG can be thought of as a continuous analog of such a bijection.  In particular, there have been a number of scaling limit results for $\RPM$s towards $\SLE$ on LQG in which it is shown that the contour functions which encode a pair of discrete trees used to construct a planar map using a tree bijection converge in the scaling limit to the pair of contour functions which encode the continuous trees associated with space-filling $\SLE_{\kappa'}$ on LQG.  Such scaling limit results are referred to as being of ``peanosphere'' type and we will discuss this point in more detail later on.

As we mentioned earlier, the $\SLE_\kappa(\rho)$ processes with $\rho < -2$ exhibit a rather different character from the $\SLE_\kappa(\rho)$ processes with $\rho > -2$ (we exclude the case $\rho = -2$ because it is in a certain sense degenerate).  In particular, they are self-intersecting for $\kappa \in (0,4)$ even though the ordinary $\SLE_\kappa$ processes are not.  There are two regimes of $\rho$ values in which these processes are defined:
\begin{enumerate}[(i)]
\item The loop-making regime: $-2-\kappa/2 < \rho \leq \kappa/2-4$.
\item The light cone regime:	 $\max(\kappa/2-4,-2-\kappa/2) < \rho < -2$.
\end{enumerate}
The basic properties of the $\SLE_\kappa(\rho)$ processes in the loop-making regime were studied in \cite{msw2017clepercolations} and their relationship with LQG in \cite{msw2020simplecle}.  The particular case $\rho=\kappa-6$ in this regime is of special significance because it corresponds to the conformal loop ensembles ($\CLE$) \cite{s2009cle,sw2012cle}.  This case is thus motivated because it conjecturally corresponds to the scaling limit of $\RPM$ models decorated by a statistical mechanics model such as the Ising model.  In the light cone regime, the basic properties were studied in \cite{ms2019lightcone} and the purpose of the present paper is to study the relationship between these processes and LQG.  One application of this regime is that it makes it possible to connect bipolar oriented random planar maps with large faces to LQG \cite{km2022bplargefaces} using a ``peanosphere'' type scaling limit as was briefly described above.

\subsection{Main results}
\label{subsec:main_results}

We now turn to state our main results, which are stated in terms of quantum wedges and $\SLE_\kappa(\rho)$ processes in the light cone regime.  We will give the precise definition of the former in Section~\ref{subsec:lqg} and the latter in Section~\ref{subsec:slekapparho}.  The theorem statements that we will give concern an $\SLE_\kappa(\rho)$ process $\eta$ with $\max(\kappa/2-4,-2-\kappa/2) < \rho < -2$ drawn on top of an independent quantum wedge $\CW = (\h,h,0,\infty)$ and that the quantum surfaces which are parameterized by the components of $\h \setminus \eta$ together form a quantum wedge.  We emphasize that $\CW$ will be homeomorphic to $\h$ while the quantum wedge corresponding to $\h \setminus \eta$ is not homeomorphic to $\h$.  As we will describe in further detail in Section~\ref{subsec:lqg}, the starting point for the construction of this latter surface is a Bessel process $Y$ with dimension in $(0,1)$ and therefore has a well-defined local time $\ell$ for the times that it hits $0$.

\begin{theorem}
\label{thm:quantum_natural_time_cutting}
Fix $\kappa \in (0,4) , \rho \in (\kappa/2 - 4, -2) \cap (-2 -\kappa/2 , -2)$, and suppose that $\CW = (\h,h,0,\infty)$ is a quantum wedge of weight $\rho + 4$. Let $\eta$ be an $\SLE_{\kappa}(\rho)$ process in $\h$ from $0$ to $\infty$ with a single force point of weight $\rho$ located at $0_+$ and assume that $\eta$ is independent of $h$.  Then the following hold:
\begin{enumerate}[(i)]
\item The law of the beaded surface consisting of the components of $\h \setminus \eta$ which are to the right of $\eta$ is that of a quantum wedge of weight $\rho + 2$.
\item Let $\qnt_{u}$ be the first capacity time $t$ for $\eta$ that the local time at $0$ of the Bessel process $Y$ which encodes the weight $\rho + 2$ wedge is equal to $u$. For each $u > 0$, we have (as path-decorated quantum surfaces) that $(h,\eta)$ and $(h \circ f_{\qnt_{u}}^{-1} + Q\log|(f_{\qnt_{u}}^{-1})'|,f_{\qnt_{u}}(\eta))$ have the same law,  where $(f_t)$ is the forward centered Loewner flow corresponding to $\eta$.
\end{enumerate}
\end{theorem}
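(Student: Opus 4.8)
The plan is to establish both statements simultaneously by identifying the welding of the weight $\rho+2$ wedge (to the right of $\eta$) together with the weight $2$ wedge or thin wedge (to its left), but since only the right surface appears in the statement, I would focus on the following more direct route. First I would recall from the imaginary geometry framework and \cite{ms2019lightcone} that an $\SLE_\kappa(\rho)$ process in the light cone regime can be constructed as a flow line or branch of a space-filling exploration, and in particular that the capacity-parametrized Loewner evolution for $\eta$ is governed by a Bessel-type SDE: writing the driving force point distance as $O_t - W_t$, the relevant process is (up to time change) a Bessel process $Y$ of dimension $d = 1 + 2(\rho+2)/\kappa \in (0,1)$, whose excursions away from $0$ correspond to the ``bubbles'' cut out by $\eta$ and whose local time $\ell$ at $0$ is the natural parametrization by which the left boundary of the beaded surface is measured.

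The core of the argument is a change-of-coordinates computation. The idea is: fix $u>0$ and let $T = \qnt_u$, the first capacity time at which $\ell$ reaches $u$. I would apply the centered Loewner map $f_T$, which sends $(\h, \eta(T), \infty)$ to $(\h, 0, \infty)$, and set $\wt h = h \circ f_T^{-1} + Q \log|(f_T^{-1})'|$ so that $(\h, \wt h, 0, \infty)$ is again a (random) quantum surface and $f_T(\eta)$ is the image path. By the definition of quantum wedge via a Bessel process $Y$ and by the conformal covariance \eqref{eqn:change_of_coordinates} of the LQG measures, the surface to the right of $f_T(\eta)$ is the image of the surface to the right of $\eta$; since the latter is a weight $\rho+2$ wedge by part (i), and a wedge ``restarted'' at a stopping time of its defining Bessel local time has the same law (the Bessel process after the $u$-th zero-local-time epoch is again a Bessel process of the same dimension, and the corresponding field decomposition is scale-invariant modulo the additive constant absorbed by the wedge's intrinsic coordinate), I get that the marginal law of the right surface is preserved. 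The heart of the matter is then to promote this marginal statement on the right surface to the joint law of $(h,\eta)$, which requires knowing that the left surface, and the conditional law of $h$ given both pieces and the interface, is also reproduced.

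Thus the key steps, in order, are: (1) prove part (i) by constructing the right surface's Bessel encoding directly from the Loewner SDE for $\eta$ on the wedge and matching the $\BESQ$/$\BES$ exponents with the weight $\rho+2$ wedge — this is where the identity $\alpha = 1-2(\rho+2)/\kappa$ and the dimension $d\in(0,1)$ enter and where one must carefully check the length-measure normalization against \eqref{eqn:boundary_limit}; (2) establish a Markov/stationarity property: the triple consisting of the weight $\rho+2$ wedge on the right, the complementary surface on the left, and the field on $\h$ decomposed along $\eta$ up to capacity time $t$, has a law that is invariant (modulo recentering by $f_t$) under advancing $t$ to any stopping time of the local-time clock $\ell$; (3) deduce (ii) from (2) by specializing to $t=\qnt_u$ and noting that after recentering the whole configuration $(h,\eta)$ is reconstructed from the same ingredients with the same law. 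For step (3) one uses that $\qnt_u$ is a stopping time for the filtration generated by $\eta$ (since $\ell$ is adapted to $\eta$ and independent of $h$) together with the independence of $\eta$ and $h$ at time zero.

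The main obstacle I anticipate is step (2) — the stationarity of the \emph{joint} configuration under the local-time clock, rather than the capacity clock. Under capacity time the Loewner flow has the obvious Markov property, but the natural reparametrization by $\ell$ (which is the ``quantum natural time'' of the title) is singular with respect to capacity, supported on the zero set of $Y$, so one cannot simply quote a strong Markov property for the SDE. The right tool is likely the excursion decomposition of $Y$: between consecutive zeros $\eta$ cuts out an independent quantum bubble, and the weight $\rho+2$ wedge is literally the concatenation of these bubbles indexed by a Poisson point process on $\ell$-time with an $\alpha$-stable intensity for the bubble boundary lengths. Advancing $\ell$ past a deterministic level $u$ then just means discarding the first $u$ units of this Poisson process, which by stationarity of the intensity leaves the conditional law of the remainder unchanged; simultaneously one must track that the ``used up'' part, re-welded with the left boundary, exactly reconstitutes the original wedge $\CW$ after applying $f_{\qnt_u}^{-1}$. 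Making the bookkeeping between the two boundary arcs, the capacity time change, and the local-time clock fully rigorous — in particular handling the infinitely many small bubbles near each cut time — is where the real work lies, and I would lean on the quantum-wedge/quantum-bubble technology and the Poissonian structure from \cite{dms2014mating} and \cite{ms2019lightcone} to control it.
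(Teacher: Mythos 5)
Your skeleton (excursion decomposition, Poissonian structure of the bubbles, stationarity under the local-time clock) matches the spirit of the paper, and you correctly single out the passage from capacity time to the local-time clock as delicate. But there is a genuine gap at the very first step, and it propagates. You propose to prove part (i) ``directly from the Loewner SDE for $\eta$ on the wedge,'' matching Bessel exponents. This cannot work as stated: the Bessel process of dimension $1+2(\rho+2)/\kappa$ appearing in the Loewner SDE drives the \emph{curve}, which is independent of the field $h$, so no amount of exponent matching on the driving process alone identifies the law of the \emph{quantum surfaces} cut out by $\eta$. The identification of the bubbles as a weight $\rho+2$ wedge is exactly the content of Theorem~\ref{thm:wedge_cutting}, and its proof requires coupling the field with the \emph{reverse} Loewner flow (the quantum zipper, Theorem~\ref{thm:reverse_coupling}) for $\wt{\rho}=\rho+4<2$ -- which is the paper's main new technical input, since in this regime the reverse driving process is not a semimartingale and one must pass through the $\epsilon$-Bessel approximation and a generalized It\^o formula -- together with an asymptotic independence statement for the successively zipped-in bubbles (the triviality of the harmonic parts $\wh{\psi}_j^{\epsilon,\overline{\epsilon},r}$), which in turn rests on the law of an $\SLE_\kappa(\rho)$ excursion (Theorem~\ref{thm:law_of_sle_excursion}) and Lemma~\ref{lem:sle_separates_points}. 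None of this machinery appears in your outline, and without it your step (1) has no proof.

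Moreover, even granting the Poissonian structure of the right-hand bubbles, your steps (2)--(3) only control the \emph{marginal} law of the right beaded surface: discarding the first $u$ units of the excursion Poisson process says nothing, by itself, about the conditional law of the field $h$ on all of $\h$ given that surface, which is what part (ii) (equality in law of the pairs $(h,\eta)$ and the recentered pair at $\qnt_u$) requires. In the paper this is handled by the zipper stationarity at inverse local times (Lemma~\ref{lem:equality_in_law}) combined with a quantum-typical-point argument (Lemma~\ref{lem:main_lemma}): one samples a point $w$ from the local-time measure, which adds a $-\gamma(2-d)\log|\cdot-w|$ singularity; together with the original $-(2+\rho)/\gamma\log|\cdot|$ singularity this is precisely what produces the weight $\rho+4$ wedge of the statement after zooming in, and a total variation argument as $u\to\infty$ then yields the invariance in (ii). Your proposal contains no analogue of this re-rooting/zooming step, so it never explains where the weight $\rho+4$ hypothesis is actually used, nor how the joint law of $(h,\eta)$ (rather than the right surface alone) is reproduced.
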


\begin{definition}
\label{def:quantum_natural_time}
We call $\qnt_u$ the quantum natural time parameterization of the bubbles which are cut off by $\eta$ from $\infty$.
\end{definition}

In \cite{dms2014mating}, a similar definition of quantum natural time for $\SLE_{\kappa'}$ processes with $\kappa' \in (4,8)$ is given.  In contrast to the case of $\SLE_{\kappa'}$ processes, an $\SLE_\kappa(\rho)$ process as in Theorem~\ref{thm:quantum_natural_time_cutting} parameterized by quantum natural time is not continuous, the reason being that as the process draws the boundary of a large component it does not discover any small components.

\begin{figure}[ht!]
\begin{center}
\includegraphics[scale=0.85]{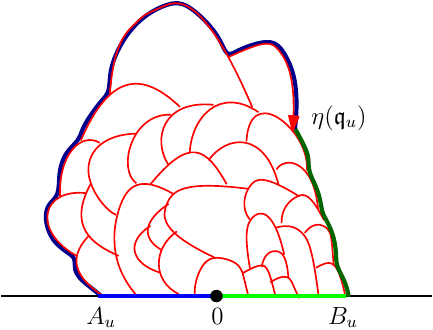}
\end{center}
\caption{\label{fig:boundary_length_evolution} Shown is an $\SLE_\kappa(\rho)$ process in $\h$ from $0$ to $\infty$ with a single force point located at $0_+$ in the light cone regime and drawn up to a typical quantum natural time $\qnt_u$.  The arc which disconnects each component from $\infty$ is drawn in its entirety before $\eta$ ``creeps'' up it disconnects further disks.  Its left boundary length $L_u$ is equal to the quantum length of the dark blue arc (counterclockwise from $\eta(\qnt_u)$ to $A_u$) minus the quantum length of the light blue arc ($[A_u,0]$).  Similarly, its right boundary length $R_u$ is equal to the quantum length of the dark green arc (clockwise from $\eta(\qnt_u)$ to $B_u$) minus the quantum length of the light green arc ($[0,B_u]$).  We prove in Theorem~\ref{thm:boundary_length_evolution} that $(L,R)$ evolves as an $\alpha$-stable L\'evy process where $\alpha$ is determined by $\rho$ as in~\eqref{eqn:alpha_value}.}
\end{figure}

Theorem~\ref{thm:quantum_natural_time_cutting} allows us to describe the law of the boundary length evolution of $\eta$ when it has the quantum natural time parameterization.  More precisely,  for each $u > 0$ we let $A_u$ (resp.\ $B_u$) be the leftmost (resp.\ rightmost) point of intersection of $\eta([0,\qnt_u])$ with $\R_-$ (resp.\ $\R_+$).  Let also $L_u$ be the difference between the quantum length of the segment of the outer boundary of $\eta([0,\qnt_u])$ connecting $\eta(\qnt_u)$ and $A_u$ with the quantum length of $[A_u,0]$.  Similarly, let $R_u$ be the difference between the quantum length of the segment of the outer boundary of $\eta([0,\qnt_u])$ connecting $\eta(\qnt_u)$ and $B_u$ with the quantum length of $[0,B_u]$.  Then the following theorem describes the evolution of $(L,R)$.  Recall that $(L,R)$ is an $\alpha$-stable L\'evy process if it has stationary and independent increments and for each $c > 0$ we have that $(L_u,R_u)_{u \geq 0}$ and $(c^{-1/\alpha}L_{c u},c^{-1/\alpha}R_{c u})_{u \geq 0}$ have the same law.

\begin{theorem}
\label{thm:boundary_length_evolution}
Suppose that we have the setup described in the statement of Theorem~\ref{thm:quantum_natural_time_cutting} and the process $(L,R)$ is as described just above.  Then $(L,R)$ is an $\alpha$-stable L\'evy process with
\begin{align}\label{eqn:alpha_value}
\alpha = 1 - \frac{2(\rho+2)}{\kappa} \in (1,2).
\end{align}
Moreover,  $L$ (resp.\ $R$) has only upward (resp.\ downward) jumps and the jump times of $L$ coincide with the jump times of $R$.  In the case that $\kappa \in (\frac{4}{3},2)$ and $\rho = \kappa - 4$ the sequence of jumps of $(L,-R)$ forms a Poisson point process (p.p.p.) $\Lambda^*$ whose law can be sampled from as follows.
\begin{itemize}
\item Firstly,  we sample a p.p.p.\  $\Lambda = \{(s_j,t_j) : j \in \N \}$ on $\R_+ \times \R_+$ with intensity measure given by $c(du \times t^{-4/\kappa}dt)$,  where $c > 0$ is a constant depending only on $\kappa$.
\item Next, we sample independently a sequence of i.i.d.\ random variables $(u_j)_{j \in \N}$ which are uniform on $[0,1]$ and consider 
\begin{align*}
\Lambda^* = \{(t_j u_j,  (1-u_j)t_j) : (s_j,t_j) \in \Lambda,  j \in \N\}.
\end{align*}
\end{itemize}
\end{theorem}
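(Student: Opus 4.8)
The plan is to deduce Theorem~\ref{thm:boundary_length_evolution} from Theorem~\ref{thm:quantum_natural_time_cutting} by exploiting the scaling and Markov properties encoded in the quantum natural time parameterization. First I would establish that $(L,R)$ has stationary and independent increments. The stationarity is essentially immediate from part~(ii) of Theorem~\ref{thm:quantum_natural_time_cutting}: applying the forward centered Loewner flow $f_{\qnt_u}$ maps $(h,\eta)$ to an equivalent path-decorated quantum surface, and since $L_u,R_u$ are defined intrinsically from the quantum surface $(\h,h)$ decorated by $\eta$ (as differences of quantum lengths of boundary arcs), the increment $(L_{u+v}-L_u, R_{u+v}-R_u)$ seen from $\eta(\qnt_u)$ in the flowed coordinates has the same law as $(L_v,R_v)$, and is independent of the past by the Markov property of the Loewner flow together with the domain Markov property of the $\SLE_\kappa(\rho)$ and the conditional independence of $h$. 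I would need to be careful that the quantum natural time clock $u$, which is built from the Bessel local time $\ell$ of the weight $\rho+2$ wedge via part~(i), is itself an additive functional that is compatible with this resampling — this is where the precise construction of the weight $\rho+2$ wedge as a Bessel-driven surface, together with the additivity of local time, does the work.

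Next I would pin down the scaling exponent. Scaling the field $h \mapsto h + \tfrac{2}{\gamma}\log c$ multiplies all quantum lengths by $c$ and all quantum areas by $c^2$; simultaneously this rescales the Bessel local time and hence the quantum natural time clock by the appropriate power. Tracking how the weight $\rho+2$ wedge (equivalently the Bessel process of dimension $\delta = 1 - 2(\rho+2)/\kappa$, so that $\alpha = \delta$) transforms under this rescaling, and matching the time change of the local time with the spatial rescaling of $(L,R)$, produces precisely the identity $(L_u,R_u) \overset{d}{=} (c^{-1/\alpha}L_{cu}, c^{-1/\alpha}R_{cu})$ with $\alpha = 1 - 2(\rho+2)/\kappa \in (1,2)$ (the range follows from the hypotheses $\max(\kappa/2-4,-2-\kappa/2) < \rho < -2$). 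Together with the first step, this shows $(L,R)$ is an $\alpha$-stable L\'evy process. The structure of the jumps — that $L$ jumps only upward, $R$ only downward, and the jump times coincide — follows from the geometry described in Figure~\ref{fig:boundary_length_evolution}: a jump occurs exactly when $\eta$ finishes tracing the outer boundary arc of a bubble and begins creeping up it, at which instant the piece of left (resp.\ right) boundary of $\eta([0,\qnt_u])$ jumps by the quantum length of that arc, which is positive on the left side and forces a negative change on the right side by the disconnection.

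For the final and most delicate claim — the explicit Poisson point process description in the case $\kappa \in (4/3,2)$, $\rho = \kappa - 4$ — I would proceed as follows. When $\rho = \kappa - 4$, we have $\alpha = 3 - 4/\kappa$, and the jumps of the stable process $(L,-R)$ form a Poisson point process on $\R_+ \times (\R_{>0}^2)$; by the L\'evy–Khintchine/It\^o description its intensity in the jump-size variable is a homogeneous measure of the appropriate index. The key geometric input is that each jump corresponds to a single bubble $U$ cut off by $\eta$ from $\infty$, whose left and right quantum boundary lengths are exactly the coordinates of the jump of $(L,-R)$; and the total quantum boundary length $t_j$ of such a bubble, together with its appearance time $s_j$ in the natural time clock, should form a Poisson point process with intensity $c\,du\times t^{-4/\kappa}\,dt$ — this is the statement that the bubbles cut off form a Poisson point process governed by the disintegration of a quantum disk measure, as in the mating-of-trees framework, and the exponent $-4/\kappa$ is the dimensional/scaling exponent for the boundary length of a quantum disk of weight matched to $\rho = \kappa-4$. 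Then I would argue that, conditionally on the total length $t_j$ of a bubble, the split of $t_j$ into its left part $t_j u_j$ and right part $(1-u_j)t_j$ is uniform: this is a consequence of the fact that the marked point on the boundary of the quantum disk where $\eta$ reconnects is, conditionally on the disk, distributed according to quantum boundary length, hence the proportion $u_j$ is uniform on $[0,1]$ and independent across bubbles. The main obstacle is this last point — making rigorous the independence between the ``shape/total length'' of each cut-off bubble and the ``location of the reconnection point'' within it, and identifying the resulting conditional law of the reconnection point as uniform with respect to quantum length. I expect this to require combining Theorem~\ref{thm:quantum_natural_time_cutting} with a careful analysis of the Bessel excursion decomposition underlying the weight $\rho+2$ wedge (each excursion of $Y$ away from $0$ corresponding to one bubble, its duration/height to the bubble's boundary length, and an independent uniform mark to the reconnection proportion), plus the scaling computation to fix the constant $c$ and the exponent $-4/\kappa$.
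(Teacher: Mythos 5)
Your outline follows essentially the same route as the paper: Lévy property of $(L,R)$ from the invariance in Theorem~\ref{thm:quantum_natural_time_cutting}, stable scaling by adding a constant to the field and tracking how quantum length, the Bessel local time and hence the quantum natural time clock transform, jumps identified with the bubbles cut off by $\eta$, and, for $\rho=\kappa-4$, the uniform split of each jump obtained from the fact that the two marked points of a quantum disk are distributed according to quantum boundary length (the paper invokes the resampling property \cite[Proposition~A.8]{dms2014mating} for exactly this).

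There is, however, a genuine gap in the last step: the intensity $c\,(du\times t^{-4/\kappa}dt)$ for the point process of total bubble boundary lengths is asserted (``should form a Poisson point process \ldots the exponent $-4/\kappa$ is the dimensional/scaling exponent'') rather than derived, and this is precisely the part the paper treats as a separate theorem (the Poissonian structure of the quantum lengths of the beads of a weight $\rho+2$ wedge, Theorem~\ref{thm:law_of_quantum_lengths}). Its proof is not a formality: one decomposes the encoding Bessel process into excursions via the Itô measure on maxima $\nu_\delta^*(de^*)\propto (e^*)^{\delta-3}de^*$, represents each excursion as two back-to-back $\BES^{4-\delta}$ pieces, and uses Brownian scaling together with the independent lateral (mean-zero) part of the field to write each bead's boundary length as $e^* U_e$ with $(U_e)$ i.i.d.\ and independent of the maxima; the power-law intensity then follows from the mapping lemmas \cite[Lemmas~4.19--4.20]{dms2014mating}, which require the moment bound $\E[U_e^{-\alpha-1}]<\infty$. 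In particular, your gloss that the excursion's ``duration/height'' gives the bubble's boundary length is not accurate — the length is the excursion maximum times an independent random factor coming from the quantum measure of the bead, and without the moment bound the stated intensity does not follow. A smaller slip: the Bessel dimension of the weight $\rho+2$ wedge is $\delta=1+\tfrac{2(\rho+2)}{\kappa}\in(0,1)$, and the scaling computation gives $\alpha=2-\delta$, not $\alpha=\delta$; your final formula for $\alpha$ is nonetheless correct.
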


Note that by \cite{bertoin1996levy},  the law of a two-dimensional $\alpha$-stable L\'evy process is determined by the law of its jumps.  Hence when $\rho = \kappa - 4$,  Theorem~\ref{thm:boundary_length_evolution} gives a complete description of the law of the process $(L,R)$.  Moreover,  combining Theorem~\ref{thm:boundary_length_evolution} with the one-dimensional $\text{KPZ}$ formula for quantum boundary length,  we obtain the following formula for the Hausdorff dimension of $\eta \cap \R_+$.

\begin{corollary}
\label{cor:dim_of_boundary_intersection}
Fix $\kappa \in (0,4),  \rho \in (\kappa/2-4,-2) \cap (-2 - \kappa/2,-2)$ and let $\eta$ be an $\SLE_{\kappa}(\rho)$ process in $\h$ from $0$ to $\infty$ with its force point located at $0_+$.  Then, a.s.\  the Hausdorff dimension of the set $\eta \cap \R_+$ is given by 
\begin{equation}
\label{eqn:dim_of_intersection_with_R_+}
-\frac{(2+\rho)(\kappa + 8 + 2\rho)}{2\kappa}.
\end{equation}
\end{corollary}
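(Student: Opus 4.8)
The plan is to combine Theorem~\ref{thm:boundary_length_evolution} with the one-dimensional KPZ relation for quantum boundary length in order to transfer the known dimension of a set measured with respect to quantum length into a Euclidean Hausdorff dimension. The key point is that on the weight $(\rho+2)$-quantum wedge, the set $\eta \cap \R_+$ — the range of the process $(L,R)$ intersected with the positive axis — carries a natural description in terms of the $\alpha$-stable L\'evy process $(L,R)$ of Theorem~\ref{thm:boundary_length_evolution}, and its \emph{quantum} dimension (as a subset of the quantum boundary line equipped with the measure $\qbmeasure{h}$) can be read off from classical fluctuation theory for stable processes. I would then invoke the (boundary) KPZ formula, in the form appropriate for the boundary measure of a $\sqrt{\kappa}$-LQG surface (the one-dimensional KPZ relation of Duplantier--Sheffield, as used in this subject, with boundary dimension parameter), to convert the quantum dimension into the Euclidean dimension, and finally simplify the resulting algebraic expression to~\eqref{eqn:dim_of_intersection_with_R_+}.

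More concretely, the first step is to identify the quantum-length dimension of $\eta \cap \R_+$. By part (i) of Theorem~\ref{thm:quantum_natural_time_cutting}, the surface to the right of $\eta$ is a weight $(\rho+2)$-wedge, and by Theorem~\ref{thm:boundary_length_evolution} the right boundary length process $R$ is (the downward-jumping coordinate of) an $\alpha$-stable L\'evy process with $\alpha = 1 - 2(\rho+2)/\kappa \in (1,2)$. The times $u$ at which $\eta$ has cut off a bubble correspond to jumps of $R$, and the quantum boundary length parameterization of $\R_+$ near $0$ is governed by the running infimum (or an appropriate ladder-time/regenerative structure) of $R$. The set $\eta \cap \R_+$, viewed inside the quantum line, is then essentially the closure of the range of a stable subordinator-type object, or the zero set / ladder set associated with $R$; its Hausdorff dimension with respect to the measure $\qbmeasure{h}$ is a classical quantity. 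For a spectrally one-sided $\alpha$-stable process the relevant ladder/regenerative set has dimension $1-1/\alpha$ (equivalently, the range of the associated stable subordinator of index $1/\alpha$ has dimension $1/\alpha$ — one must be careful which set is the right one). I would pin this down by a short argument using the scaling relation for $(L,R)$ stated in the theorem together with Blumenthal--Getoor index computations and the regenerative set structure, so that the quantum dimension $d_q$ of $\eta \cap \R_+$ comes out as an explicit function of $\alpha$, hence of $\rho$ and $\kappa$.

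The second step is the KPZ transfer. The one-dimensional KPZ formula for a $\sqrt{\kappa}$-LQG boundary measure relates the Euclidean (Minkowski/Hausdorff) dimension $d$ of a (suitably regular, independent) subset of a boundary line to its quantum dimension $d_q$ via a quadratic relation in which the coefficients depend on $\gamma = \sqrt{\kappa}$. One has to check that $\eta \cap \R_+$ is of the type to which the KPZ formula applies — here independence of $\eta$ and $h$ (assumed in Theorem~\ref{thm:quantum_natural_time_cutting}), together with the fact that the quantum boundary measure is a boundary Gaussian multiplicative chaos independent of the structure cutting out the set, is exactly what makes the formula applicable; this is the standard mechanism by which Euclidean dimensions of $\SLE$-type sets have been computed (e.g. for $\SLE_\kappa(\rho)$ boundary intersection in the boundary-intersecting regime). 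Plugging the value of $d_q$ from Step 1 into the KPZ relation and solving the quadratic for $d$ should, after simplification, yield exactly $-(2+\rho)(\kappa+8+2\rho)/(2\kappa)$; I would verify the algebra and also sanity-check the endpoints (e.g. that $d \to 0$ as $\rho \downarrow -2$ where the intersection becomes a.s.\ a single point, and that $d$ stays in $[0,1]$ throughout the light cone regime).

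The main obstacle I anticipate is not the KPZ bookkeeping but Step 1: making rigorous the identification of $\eta \cap \R_+$, as a subset of the quantum boundary line, with a concrete regenerative/range set of the stable process $(L,R)$, and correctly computing its quantum dimension. Two subtleties must be handled carefully. First, the quantum length parameterization of $\R_+$ is not the natural time parameterization $\qnt_u$ of the bubbles, so one must relate the stable process (indexed by quantum natural time $u$) to the object indexed by quantum boundary length — this involves a time change by (roughly) the inverse of $u \mapsto$ (quantum length of $[0,B_u]$), and one needs to argue the time change does not affect the Hausdorff dimension (it is a.s.\ locally bi-H\"older on the relevant set, or one argues via the scaling exponent directly). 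Second, $\eta$ is self-intersecting and ``creeps'' along bubble boundaries, so $\eta \cap \R_+$ receives contributions both from the cut points $B_u$ and from the subsequent tracing of boundary arcs; one must check that the dimension is dominated by the ``jump/ladder'' structure and that the additional arc-tracing contributes a set of no larger dimension. Modulo these points — which I expect to be dispatched using the explicit description of the jumps in Theorem~\ref{thm:boundary_length_evolution} and standard Lévy fluctuation theory — the corollary follows.
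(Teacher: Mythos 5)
Your overall strategy is the same as the paper's: use Theorem~\ref{thm:boundary_length_evolution} together with L\'evy fluctuation theory to compute the quantum-length dimension of $\eta \cap \R_+$, and then transfer it to a Euclidean dimension via the one-dimensional boundary KPZ relation (justified by the independence of $\eta$ and $h$, plus local absolute continuity of the wedge field with respect to a free boundary GFF, which you should state explicitly since the KPZ input is formulated for the latter). The genuine gap is in your Step 1, and it matters because the final formula is sensitive to the exponent fed into KPZ. Neither candidate value you float is correct: $1-1/\alpha$ is the dimension of the ladder \emph{time} set of $R$, i.e.\ of the relevant set in the quantum \emph{natural time} parameterization, and ``the range of a subordinator of index $1/\alpha$'' does not correspond to any object in this picture. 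Moreover, your fallback — that the time change from quantum natural time to quantum boundary length is locally bi-H\"older and hence dimension-preserving — is false here: that time change is precisely the ladder-time/ladder-height correspondence for a stable process, and it changes the dimension from $1-1/\alpha$ to $\alpha-1$ (these agree only when $\alpha=1$). Running your plan as written would therefore either output the wrong exponent or rest on an invalid invariance claim.

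The missing idea, which makes any reparameterization argument unnecessary, is a direct set identification. Parameterize $\R_+$ by quantum length and $\eta$ by quantum natural time, and let $I_u=\inf_{s\le u}R_s$. Then $\eta$ lies on $\R_+$ at time $u$ precisely when $R_u=I_u$, and at such a time the quantum length of the segment from $0$ to the hit point equals $-R_u$. Hence $\{\nu_h([0,x]) : x \in \eta\cap\R_+\}$ is exactly the closure of the range of the ascending ladder height process $H$ of $-R$. Since $-R$ is a spectrally positive $\alpha$-stable process with $\alpha\in(1,2)$, $H$ is an $(\alpha-1)$-stable subordinator, so the quantum dimension is $\alpha-1=-2(\rho+2)/\kappa$ (not $1-1/\alpha$); plugging $d_q=\alpha-1$ into the boundary KPZ relation $\dim_{\mathcal H} = (1+\kappa/4)\,d_q - (\kappa/4)\,d_q^2$ and simplifying yields~\eqref{eqn:dim_of_intersection_with_R_+}. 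This identification also disposes of your second worry: the ``arc-tracing'' of bubble boundaries happens off $\R_+$, and every boundary hit is recorded by a running-infimum time of $R$, so no separate estimate for an extra contribution is needed.
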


We note that the analog of Corollary~\ref{cor:dim_of_boundary_intersection} for $\rho > -2$ was given in \cite{mw2017slepaths} and for $-2-\kappa/2 < \rho \leq \kappa/2-4$ follows from the loop-trunk decomposition of such processes given in \cite{msw2017clepercolations} combined with the $\rho > -2$ case given in \cite{mw2017slepaths}.  We note that for $\rho = \kappa/2-4$ the value from~\eqref{eqn:dim_of_intersection_with_R_+} is equal to $2-8/\kappa'$ where $\kappa'=16/\kappa$.  This, in turn, is equal to the dimension of the intersection of an $\SLE_{\kappa'}$ process in $\h$ with $\partial \h$ determined in \cite{as2011covariant}.  This is not a coincidence as it was shown in \cite{ms2019lightcone} that the law of the range of an $\SLE_\kappa(\kappa/2-4)$ process agrees with that of an $\SLE_{\kappa'}(\kappa'/2-4)$ process (but the corresponding curves visit the points in their range in a different order).

\subsection{Mating of trees interpretation}
\label{subsec:interpretation}

We will now describe how Theorems~\ref{thm:quantum_natural_time_cutting} and~\ref{thm:boundary_length_evolution} have the interpretation of giving a mating of trees representation of $\SLE_\kappa(\rho)$ processes in the light cone regime drawn on top of LQG, how it contrasts with the mating of trees representations given in \cite{dms2014mating}, and how this is related to the scaling limits of certain types of $\RPM$s.

\begin{figure}[ht!]
\begin{center}
\includegraphics[scale=0.85]{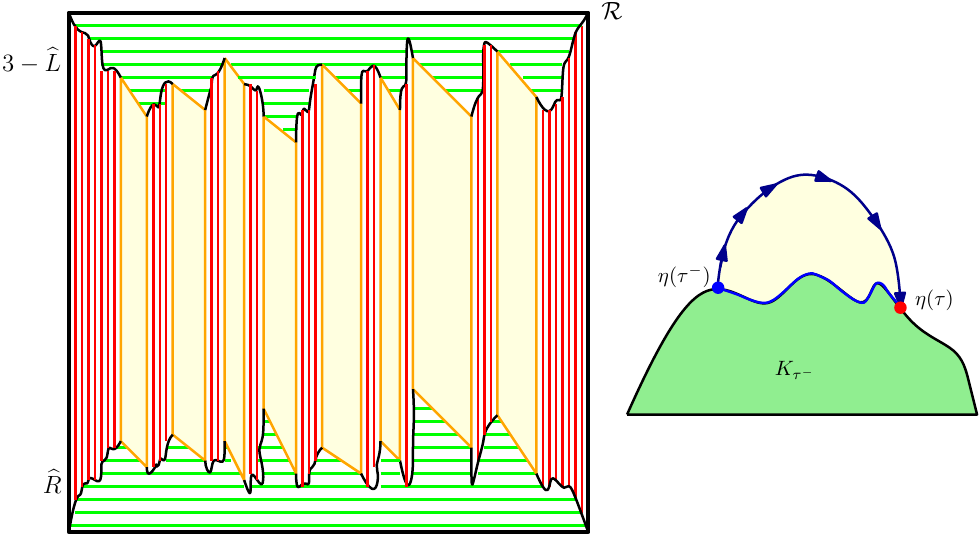}
\end{center}
\caption{\label{fig:mot_representation} Illustration of the ``mating of trees representation'', which describes the topology of an $\SLE_\kappa(\rho)$ process $\eta$ drawn on top of an independent quantum wedge of weight $\rho+4$.  {\bf Left:} Points on the graph of $\wh{R}$ are equivalent if they can be connected by a horizontal line which is below the graph of $\wh{R}$ and points on the graph of $3-\wh{L}$ are equivalent if they can be connected by a horizontal line which is above the graph of $3-\wh{L}$.  Points on the graphs of $\wh{R}$ and $3-\wh{L}$ are equivalent if they can be connected by a vertical line which does not correspond to a jump time of $(L,R)$.  The jump times $(L,R)$ are the yellow regions and in the quotient correspond to doubly marked disks which are cut out by $\eta$.  The opening (resp.\ closing) point of such a disk $D$ is given by the projection of the left (resp.\ right) vertical boundary of the corresponding yellow region $Y$ and the left (resp.\ right) boundary of $D$ is given by the projection of the part of $\partial Y$ which is on the graph of $3-\wh{L}$ (resp.\ $\wh{R}$). {\bf Right:} The time $\tau$ is a jump time for $L$, $R$, which corresponds to a time at which $\eta$ disconnects a topological disk from $\infty$ (shown in yellow); $K_{\tau-}$ is the hull cut out by $\eta$ up to just before time $\tau$.  The opening point of the disk is $\eta(\tau^-)$ (blue) and the closing point is $\eta(\tau)$ (red).  At the time $\tau$, $R$ (resp.\ $L$) makes a downward (resp.\ upward) jump of size equal to the quantum length  counterclockwise (resp.\ clockwise) arc of the disk  boundary from the opening point to the closing point.}
\end{figure}

Suppose that $(L,R)$ is an $\alpha$-stable L\'evy process with law as in Theorem~\ref{thm:boundary_length_evolution} so that $L$ (resp.\ $R$) has only upward (resp.\ downward) jumps and that the jump times of $L$ and $R$ coincide.  Let $\wt{L}$ (resp.\ $\wt{R}$) be the continuous process obtained by starting with $L$ (resp.\ $R$) and then replacing each jump with a linear segment whose length is equal to square of the jump made by $L-R$ at this time.  Since the sum of the squares of the jumps of an $\alpha$-stable L\'evy process with $\alpha \in (1,2)$ on any compact time interval is finite, it follows that $(\wt{L},\wt{R})$ is well-defined.  Let $\phi$ be a homeomorphism from $\R$ to $(-1,1)$ which takes $-\infty$ (resp.\ $\infty$) to $-1$ (resp.\ $1$) and fixes $0$.  Then the processes $\wh{L}_u = \phi(\wt{L}_{\phi^{-1}(u)})$, $\wh{R}_u = \phi(\wt{R}_{\phi^{-1}(u)})$ are defined on $[0,1)$ and take values in $(-1,1)$.  Let $\CI$ be the image under $\phi$ of those intervals where $\wt{L}$, $\wt{R}$ are linear.

Suppose that we draw the graph of $\wh{R}$ and the graph of $3-\wh{L}$ in the rectangle $\CR = [0,1] \times [-1,4]$.  Let $\CA$ consist of those vertical lines $[(u, \wh{R}_u), (u, 3-\wh{L}_u)]$ where $u \in \CI$.  We consider the finest equivalence relation $\sim$ on $\CR$ such that points in $\CR \setminus \CA$ which can be connected by:
\begin{itemize}
\item a horizontal line which lies below the graph of $\wh{R}$, or
\item a horizontal line which lies above the graph of $3-\wh{L}$, or
\item a vertical line which connects the graphs of $\wh{R}$ and $3-\wh{L}$ whose $x$-coordinate is not in $\CI$.
\end{itemize}
Arguing as in \cite{dms2014mating}, it is possible to check using Moore's theorem that the topological space $\CR / \sim$ is homeomorphic to $\h$.  Moreover, one can consider the continuous curve $\eta$ which is given by the projection of the curve $u \mapsto 3 - \wh{L}_u$ under the quotient map.  Then $\eta$ is homeomorphic to an $\SLE_\kappa(\rho)$ process where $\rho$ is determined by $\alpha$ as in~\eqref{eqn:alpha_value}.  If $I$ is a connected component of $\CI$ (i.e., $I = (a,b)$ for some $-1 < a < b < 1$), then the topological disk given by the union of the vertical segments $[(u, \wh{R}_u), (u, 3-\wh{L}_u)]$ for $u \in I$ corresponds to a component which is disconnected from $\infty$ by $\eta$.

Recall that a planar map is a graph $G = (V,E)$ together with an embedding into the plane so that no two edges cross,  considered up to orientation preserving homeomorphism.  A $\RPM$ is a planar map chosen according to some probability measure.  Examples of $\RPM$s include planar triangulations and quadrangulations chosen uniformly at random \cite{lg2013bm,m2013bm,lg2019survey,miermontstflour} and weighted by the partition function of various models from statistical mechanics \cite{s2016qginv,kmsw2019bipolar,lsw2017schnyder,gm2021saw,gm2017percolation,gkmw2018active}.  One approach to study $\RPM$s is to encode them in terms of random trees and random walks via combinatorial bijections.  An important class of such bijections are the so-called \emph{mating of trees bijections} which represent a planar map decorated by a statistical mechanics model as the gluing of a pair of discrete trees.  Since discrete trees can be represented in terms of their contour function, such an encoding is equivalent to encoding the map by a two-dimensional walk.  The construction of LQG surfaces using matings of trees \cite{dms2014mating} therefore leads to a natural topology on surfaces which is called the \emph{peanosphere topology}.  In particular, a surface decorated by a tree can be encoded in terms of a pair of continuous functions $(L,R)$ where $L$ (resp.\ $R$) is given by the contour function of the tree (resp.\ dual tree) on the surface.  Then if we have two tree-decorated surfaces with associated pairs of contour functions $(L,R)$ and $(L',R')$,  we define the distance between the two surfaces to be the $L^\infty$ distance between $(L,R)$ and $(L',R')$.  In other words, the peanosphere topology is the restriction of the $L^\infty$ metric to the space of continuous functions which arise as the contour functions related to tree-decorated surfaces.  Since applying a rescaling to a planar map corresponds to applying a rescaling to the discrete pair of trees encoding the map, we say that a sequence of rescaled $\RPM$s converges in distribution to a tree-decorated random quantum surface as the number of vertices (or faces) of the map tends to infinity, if the rescaled random walks encoding the $\RPM$s converge in distribution with respect to the~$L^\infty$ metric to the pair of continuous functions $(L,R)$ encoding the tree-decorated quantum surface. It was shown in \cite{kmsw2019bipolar} that if we pick a bipolar oriented planar map uniformly at random with fixed boundary lengths and whose faces are all triangles then the corresponding rescaled random walk converges in the limit to a correlated two-dimensional Brownian motion.  Using the construction related to the peanosphere topology, this result can be interpreted as being a scaling limit result towards the $\SLE$ with parameter $\kappa = 12$ on a $\sqrt{4/3}$-LQG surface.

The case of random bipolar oriented planar maps ($\BPRPM$s) with \emph{large faces}, meaning that the law of the face degree is in the domain of attraction of an $\alpha$-stable random variable, are considered in \cite{km2022bplargefaces}. It is shown in \cite[Theorem 1.3]{km2022bplargefaces} that the contour functions for the pair of discrete trees which encode such a $\BPRPM$ converge in the scaling limit to an $\alpha$-stable L\'evy process with the same law as in Theorem~\ref{thm:boundary_length_evolution} in the case that $\rho = \kappa-4$ and $\alpha=4/\kappa-1$. This allows us to interpret $\SLE_\kappa(\kappa-4)$ drawn on top of a weight $\rho+4$ quantum wedge as the scaling limit of $\BPRPM$s with large faces in the peanosphere sense.

\subsection*{Outline}  The remainder of this article is structured as follows.  In Section~\ref{sec:preliminaries}, we will collect a number of preliminaries.  Next, in Section~\ref{sec:slegff_couplings} we will collect some results on the $\SLE$/GFF coupling.  Finally, we will complete the proofs of our main theorems in Section~\ref{sec:main_theorems_proof}.

\subsection*{Acknowledgements} K.K.'s work was supported by the EPSRC grant EP/L016516/1 for the University of Cambridge CDT (CCA) and by ERC starting grant 804166 (SPRS).  J.M.'s work was supported by ERC starting grant 804166.

\section{Preliminaries}
\label{sec:preliminaries}

The purpose of this section is collect a number of preliminaries which will be used in the remainder of this article.  In Section~\ref{subsec:bessel}, we will give a brief review of Bessel processes.  Next, in Section~\ref{subsec:gff} we will give a review of the GFF and in Section~\ref{subsec:lqg} of LQG.  In Section~\ref{subsec:slekapparho} we will review the $\SLE_\kappa(\rho)$ processes and in Section~\ref{subsec:ig} the aspects of imaginary geometry which are relevant for this work.  Finally, in Section~\ref{subsec:light_cones} we will review the $\SLE_\kappa(\rho)$ processes in the light cone regime in the imaginary geometry framework.

\subsection{Bessel processes}
\label{subsec:bessel}

In this subsection,  we recall a few facts about Bessel processes which will play an important role in this paper.  We refer the reader to \cite[Chapter XI]{revuz2013continuous} for a more in depth overview of Bessel processes.

Fix $\delta \in \R$ and $x \geq 0$.  The squared $\delta$-dimensional Bessel process ($\BESQ^\delta$) starting from $x^2$ is given by the unique strong solution to the SDE
\begin{equation}\label{eq:square_bessel_sde}
dZ_t = \delta dt +  2\sqrt{Z_t} dB_t,\quad Z_0 = x^2
\end{equation}
where $B$ is a standard Brownian motion.  If we want to emphasize the starting point of a $\BESQ^\delta$ process we will write $\BESQ_{x^2}^\delta$.  Standard results for SDEs imply that there is a unique strong solution to~\eqref{eq:square_bessel_sde} up until the first time that $Z$ hits $0$.  When $\delta > 0$,  there exists a unique strong solution defined for all times which always remains non-negative.  Then for $\delta > 0$, the \emph{Bessel process} of dimension $\delta$ ($\BES^{\delta}$),  is the process $X_t = \sqrt{Z_t}$, where $Z$ is the unique strong solution to~\eqref{eq:square_bessel_sde}.  If we want to emphasize the starting point of a $\BES^\delta$ process we will write $\BES_x^\delta$.

If $\delta \geq 2$,  then a.s.\ $X_t > 0$ for all $t > 0$ while if $\delta \in (0,2)$,  then $X_t$ a.s.\ assumes the value zero on a non-empty random set with zero Lebesgue measure.  Also,  for all $\delta > 0$,  the process $X_t$ is invariant under Brownian scaling,  i.e.,  for every given constant $c > 0$,  the processes $(c^{-1/2}X_{ct})$ and $(X_t)$ have the same law.  Moreover,  when $\delta > 1$, the process $X_t$ is a semimartingale and a strong solution to the SDE
\begin{equation}
\label{eq:bessel_sde}
d X_t =  \frac{\delta-1}{2} \cdot \frac{1}{X_t} dt + d B_t, \quad X_0 = x
\end{equation}
and when $\delta =1$,  the process $X_t$ is equal in distribution to $|B|$ and still a semimartingale. However, when $\delta \in (0,1]$,  \eqref{eq:bessel_sde} holds up until the first time $t$ such that $X_t = 0$,  but it does not hold for larger $t$ since the integral $\int_{0}^{t} X_s^{-1}ds$ is infinite a.s.  In order to make sense of a solution to~\eqref{eq:bessel_sde} in integrated form for $\delta \in (0,1)$,  we make a principal value correction.  More precisely,  $X$ satisfies the equation
\begin{equation}
\label{eq:principal_value_correction_equation}
X_t = x + \frac{\delta-1}{2}\text{P.V.}\int_{0}^{t}\frac{1}{X_s} ds + B_t.
\end{equation}
The principal value correction can be expressed in terms of an integral of the local time processes associated with $X$ (see \cite[Chapter XI]{revuz2013continuous}).

Next,  we recall the \emph{approximate Bessel processes} for $\delta \in (0,1)$ considered in \cite[Section 6]{s2009cle}.  For fixed $\epsilon > 0$,  we define an $\epsilon$-$\BES^{\delta}$ process $X_t^{\epsilon}$ to be the Markov process beginning at $x > 0$ that solves~\eqref{eq:bessel_sde} except that each time it hits zero it immediately jumps to $\epsilon$ and continues.  Then we have that
\begin{equation}
\label{eq:approximate_bessel_sde}
X_t^{\epsilon} = x + \frac{\delta-1}{2} \int_{0}^{t}\frac{1}{X_s^{\epsilon}}ds + B_t + J_t^{\epsilon}
\end{equation}
for all times $t \geq 0$, where $J_t^{\epsilon}$ is $\epsilon$ times the number of $\epsilon$-jump discontinuities of $X_t^{\epsilon}$ up to and including time $t$.  Then if $t_i$ is the time of the $i$th jump,   the following was shown in \cite{s2009cle}:

\begin{proposition}\label{prop:epsilon_bessel}
As $\epsilon \to 0$,  the $X_{t}^{\epsilon}$ converge in law to a $\BES_{x}^{\delta}$ with respect to the $L^{\infty}$ metric on a fixed interval $[0,T]$, with $T>0$. As $\epsilon \to 0$ we have that
\begin{enumerate}[(i)]
\item  $J_{T}^{\epsilon} \to 0$ if $\delta>1$,
\item  $J_{T}^{\epsilon} \to \infty$ if  $0<\delta<1$, and
\item\label{it:jtsquared}  $J_{T}^{\epsilon^{2}} := \sum_{t_i \leq T} \epsilon^{2} \to 0$ for all $\delta>0$.
\end{enumerate}
\end{proposition}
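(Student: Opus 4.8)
The plan is to establish (iii) first — together with the first-moment estimate that yields (i) and (ii) — and then to deduce the convergence in law from it. Write $N_\epsilon(T)$ for the number of jumps of $X^\epsilon$ in $[0,T]$, so that, in the notation of the statement, $J^{\epsilon^2}_T=\epsilon^2 N_\epsilon(T)$ and $J^\epsilon_T=\epsilon N_\epsilon(T)$; thus (i)--(iii) all reduce to the order of growth of $N_\epsilon(T)$ as $\epsilon\to0$. By the strong Markov property of $X^\epsilon$ at its jump times and Brownian scaling for Bessel processes (the hitting time of $0$ by $\BES^\delta_y$ has the law of $y^2$ times that of $\BES^\delta_1$), the inter-jump durations of $X^\epsilon$ are, after the first, i.i.d.\ copies of $\epsilon^2\zeta$, where $\zeta$ is the hitting time of $0$ by $\BES^\delta_1$; hence $N_\epsilon(T)$ is, up to its first term, the renewal counting function at time $T/\epsilon^2$ of an i.i.d.\ sequence distributed as $\zeta$. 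For $\delta\in(0,2)$ there is the standard tail asymptotic $\mathbf{P}[\zeta>t]\sim c_\delta\,t^{-(1-\delta/2)}$ as $t\to\infty$, with exponent $1-\delta/2\in(0,1)$ (see \cite[Chapter~XI]{revuz2013continuous}), so $\zeta$ lies in the domain of attraction of an index-$(1-\delta/2)$ stable law. Renewal theory for increments of infinite mean then gives both that $N_\epsilon(T)(\epsilon^2/T)^{1-\delta/2}$ converges in law to a strictly positive, a.s.\ finite random variable and that $\mathbf{E}[N_\epsilon(T)]\asymp(T/\epsilon^2)^{1-\delta/2}$. Consequently $J^{\epsilon^2}_T=\epsilon^2 N_\epsilon(T)$ is of order $\epsilon^{\delta}$, which tends to $0$ for every $\delta>0$ (in probability and in $L^1$), giving (iii); and $J^\epsilon_T=\epsilon N_\epsilon(T)$ is of order $\epsilon^{\delta-1}$, which tends to $0$ in probability when $\delta>1$ and to $\infty$ in probability when $\delta<1$, giving (i) and (ii). (When $\delta\ge 2$, $\BES^\delta_x$ with $x>0$ a.s.\ never reaches $0$, so $X^\epsilon=\BES^\delta_x$ makes no jumps and the statement is trivial.)

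For the convergence in law I would prove the stronger fact that $X^\epsilon\to\BES^\delta_x$ uniformly on $[0,T]$ in probability, under the coupling driving both by the same Brownian motion $B$, by passing to the squared processes. Let $Z^\epsilon=(X^\epsilon)^2$. From \eqref{eq:approximate_bessel_sde} and It\^o's formula, between jumps $Z^\epsilon$ solves $dZ^\epsilon_t=\delta\,dt+2\sqrt{Z^\epsilon_t}\,dB_t$ driven by $B$, and at each jump it increases by $\epsilon^2$, so $Z^\epsilon_t=x^2+\delta t+2\int_0^t\sqrt{Z^\epsilon_s}\,dB_s+J^{\epsilon^2}_t$; that is, $Z^\epsilon$ is the $\BESQ^\delta$ equation perturbed by the increasing process $J^{\epsilon^2}$. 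Let $Z$ be the $\BESQ^\delta_{x^2}$ driven by the same $B$, so $X^\epsilon=\sqrt{Z^\epsilon}$ and $\BES^\delta_x=\sqrt{Z}$. Since adding an increasing process can only push the solution up, the comparison theorem for $\BESQ$ (whose drift $\delta$ is constant, hence Lipschitz) gives $Z^\epsilon\ge Z$; writing $D=Z^\epsilon-Z\ge0$, one has $D_t=J^{\epsilon^2}_t+M_t$ with $M_t=2\int_0^t(\sqrt{Z^\epsilon_s}-\sqrt{Z_s})\,dB_s$ a continuous local martingale, and $d[M]_t\le 4D_t\,dt$ because $(\sqrt{Z^\epsilon}+\sqrt{Z})^2\ge Z^\epsilon=Z+D\ge D$. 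A routine localization (using that the renewal function is finite) gives $\mathbf{E}[D_s]=\mathbf{E}[J^{\epsilon^2}_s]$, hence $\mathbf{E}[[M]_T]\le 4T\,\mathbf{E}[J^{\epsilon^2}_T]\to0$, so by Doob's inequality $\sup_{[0,T]}|M|\to0$ in probability and therefore $\sup_{[0,T]}D\le\sup_{[0,T]}|M|+J^{\epsilon^2}_T\to0$ in probability. Since $|a-b|\le\sqrt{|a^2-b^2|}$ for $a,b\ge0$, this yields $\sup_{[0,T]}|X^\epsilon_t-\sqrt{Z_t}|\le(\sup_{[0,T]}D)^{1/2}\to0$ in probability, and in particular $X^\epsilon$ converges in law to $\BES^\delta_x$ in $C([0,T])$.

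The main obstacle, I expect, is not the convergence in law — which the squared-process comparison makes soft — but the asymptotic order of $N_\epsilon(T)$: the dichotomy (i) versus (ii), and (through $\mathbf{E}[J^{\epsilon^2}_T]\to0$) the convergence in law, both hinge on $N_\epsilon(T)\asymp\epsilon^{-(2-\delta)}$, which rests on the exact tail exponent $1-\delta/2$ of the Bessel hitting time together with renewal theory for infinite-mean increments. I therefore expect the points requiring care to be (a) establishing $\mathbf{P}[\zeta>t]\sim c_\delta\,t^{-(1-\delta/2)}$ and the matching asymptotics of the renewal function, and (b) the localization in the comparison step needed to justify $\mathbf{E}[M_s]=0$ and the finiteness of $\mathbf{E}[[M]_T]$ — both standard, but to be carried out with some care.
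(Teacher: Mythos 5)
Your proposal is essentially correct, but note that the paper does not prove this proposition at all: it is quoted from \cite{s2009cle}, so there is no in-paper argument to compare against, and your write-up amounts to a self-contained substitute for that citation. Your route has two components. For (i)--(iii) you reduce everything to the growth of the jump count $N_\epsilon(T)$, identified via the strong Markov property and Brownian scaling as a (delayed) renewal count with i.i.d.\ increments $\epsilon^2\zeta$, $\zeta$ the hitting time of $0$ by $\BES_1^\delta$; the tail $\p[\zeta>t]\sim c_\delta t^{-(1-\delta/2)}$ (Getoor) plus infinite-mean renewal/Tauberian theory gives $N_\epsilon(T)\asymp\epsilon^{-(2-\delta)}$ and $\E[N_\epsilon(T)]\asymp\epsilon^{-(2-\delta)}$, whence $J^{\epsilon^2}_T\asymp\epsilon^\delta$ and $J^\epsilon_T\asymp\epsilon^{\delta-1}$. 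For the functional statement you work with $Z^\epsilon=(X^\epsilon)^2$, which satisfies the $\BESQ^\delta$ equation perturbed by the increasing process $J^{\epsilon^2}$, and compare it with the $\BESQ^\delta$ driven by the same Brownian motion; the bound $d\langle M\rangle_t\le 4D_t\,dt$, $\E[\langle M\rangle_T]\le 4T\,\E[J^{\epsilon^2}_T]\to0$, Doob, and $|a-b|\le\sqrt{|a^2-b^2|}$ give uniform convergence in probability under this coupling. This is a clean argument: it avoids excursion-theoretic machinery, is consistent with how $\delta\in(0,1)$ Bessel processes are defined in the paper (through the square), and actually delivers more than the stated convergence in law, namely convergence in probability under an explicit coupling, which is the form invoked in Section~3.1.

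Two caveats, neither of which I regard as a genuine gap. First, with $x>0$ the event that the process does not reach $0$ before time $T$ has positive probability, and on it $N_\epsilon(T)=0$ for every $\epsilon$; so your claim that $\epsilon^{2-\delta}N_\epsilon(T)$ has a strictly positive limit, and hence assertion (ii), can only be read conditionally on the process hitting zero. This imprecision is inherited from the statement itself (and disappears in the paper's application, where the relevant Bessel process is started at an inverse local time of $0$), but you should flag the conditioning. Second, in the comparison step you only need, and can only cheaply justify, $\E[D_s]\le\E[J^{\epsilon^2}_s]$ (optional stopping plus Fatou, using $D\ge0$); the asserted equality would need uniform integrability, but your subsequent bound uses only the inequality. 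Similarly, since $D$ has downward jumps, the Tanaka/Yamada--Watanabe argument behind $Z^\epsilon\ge Z$ produces extra jump terms whose signs must be checked (they work out), or one can bypass positivity entirely using $(\sqrt a-\sqrt b)^2\le|a-b|$ together with a Gronwall-type closing of the estimate.
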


\subsection{Gaussian free fields}
\label{subsec:gff}

Let $D \subseteq \C$ be a simply connected domain with harmonically non-trivial boundary and let $H_{0}(D)$ be the Hilbert space closure of $C_{0}^{\infty}(D)$ with respect to the Dirichlet inner product 
\[ (f,g)_{\nabla} = \frac{1}{2\pi}\int_{D}\nabla f(z) \cdot \nabla g(z) dz.\]
Then the zero boundary Gaussian free field (\text{GFF}) $h$ is the random distribution defined by 
\begin{equation}\label{eq:gff_expression}
h = \sum_{n \geq 1}\alpha_n \phi_n
\end{equation}
where $(\phi_n)_{n \geq 1}$ is an orthonormal basis of $H_{0}(D)$ with respect to $(\cdot,\cdot)_{\nabla}$ and $(\alpha_n)_{n \geq 1}$ is a sequence of i.i.d.\ random variables with distribution $N(0,1)$.  Note that a \text{GFF} on $D$ should not be considered as a function but rather as a random variable taking values in the space of distributions.  

The above construction also applies for other variants of the GFF.  In particular, a free boundary \text{GFF} is defined in the same way except that we replace $H_{0}(D)$ by the closure $H(D)$ with respect to $(\cdot,\cdot)_{\nabla}$ of the space of functions $f \in C^{\infty}(D)$ such that $\int_{D}f(z)dz = 0$.  Note that in this way,  the free boundary \text{GFF} is defined in a space of distributions modulo additive constant.  However,  we can fix the additive constant for the field by fixing its value when acting on a fixed test function which does not have mean zero.  We also note that we can define the free boundary \text{GFF} in the following way.  Let $G_{D}^{N}$ be the Green's function with Neumann boundary conditions on $\partial D$ and recall that
\begin{equation}
\label{eqn:neumann_greens}
G_{\h}^{N}(z,w) = -\log |z-w| - \log |z-\overline{w}|.
\end{equation}
Then, we can define the free boundary \text{GFF} on $\h$ as the centered Gaussian process with covariance kernel~$G_{\h}^{N}$.  We can also define the free boundary \text{GFF} in other simply connected domains by conformally mapping a free boundary \text{GFF} on~$\h$.

\subsection{Liouville quantum gravity}
\label{subsec:lqg}

Recall the definition of the \text{LQG} surfaces given in Section~\ref{sec:introduction}. We can also consider \text{LQG} surfaces with marked points $(D,h,x_1,\dots,x_n)$,  $(\wt{D},\wt{h},\wt{x}_1,\dots,\wt{x}_n)$ where we consider them to be equivalent if~\eqref{eqn:change_of_coordinates} holds and $x_j = \varphi(\wt{x}_j)$ for all $j=1,\dots,n$.

Next we give the definition of a so-called \emph{thick} quantum wedge, which is a type of quantum surface which is homeomorphic to $\h$ and has two marked points: the \emph{origin} and \emph{infinity}.  Compact neighborhoods of the former have finite quantum area while any neighborhood of the latter has infinite quantum area.  We recall that a quantum surface is an equivalence class under the equivalence relation defined by~\eqref{eqn:change_of_coordinates}.  Thus to specify the law of a quantum surface, we are free to choose which domain we will use to parameterize it.  In the case of a quantum wedge, the most convenient choice is the infinite strip $\strip = \R \times (0,\pi)$.

\begin{definition}
\label{def:thick_quantum_wedge}
Fix $\gamma \in (0,2)$, $W > \gamma^2 / 2$, and set $\alpha = \frac{\gamma}{2} + Q - \frac{1}{\gamma}W$.  A quantum wedge of weight $W$ is the quantum surface together with two marked boundary points at $-\infty$ and $+\infty$ (when parameterized by $\strip$) described by the distribution $h$ on $\strip$ whose law can be sampled from as follows.  Let $X_s$ be the process defined for $s \in \R$ such that
\begin{itemize}
\item For $s \geq 0$,  $X_s = B_{2s} + (Q - \alpha)s$ where $B$ is a standard Brownian motion with $B_0 = 0$ conditioned so that $B_{2u} + (Q - \alpha)u > 0$ for all $u > 0$ and
\item For $s < 0$,  $X_s = \wh{B}_{-2s} + (Q - \alpha)s$ where $\wh{B}$ is a standard 	Brownian motion independent of $B$ with $\wh{B}_0 = 0$.
\end{itemize}

Let $\mathcal{H}_1(\strip)$ be the subspace of $H(\strip)$ which consists of those functions which are constant on vertical lines of the form $u + [0,i\pi]$ for $u \in \R$ and let $\mathcal{H}_2(\strip)$ be the subspace of $H(\strip)$ which consists of those functions which have a common mean on all such vertical lines.  Then $h$ is the field with projection onto $\mathcal{H}_1(\strip)$ given by the function whose common value on $u + [0,i\pi]$ is $X_u$ for $u \in \R$ and its projection onto $\mathcal{H}_2(\strip)$ is given by the corresponding projection of a free boundary GFF on $\strip$.  We fix the additive constant for the projection of $h$ onto $\mathcal{H}_1(\strip)$ (resp.\ $\mathcal{H}_2(\strip)$) so that its average on $[0,i\pi]$ vanishes.
\end{definition}

\begin{remark}
The above embedding of a quantum wedge into $\strip$ is the so-called \emph{circle average embedding}.  Note that we can change the embedding $\strip$ while keeping the marked points fixed by applying~\eqref{eqn:change_of_coordinates} and using a conformal transformation $\phi : \strip \rightarrow \strip$ fixing $-\infty$ and $+\infty$, and this corresponds to translating the field $h$ by a constant $r \in \R$.
\end{remark}

Throughout this paper, we will use the notation $\langle \cdot \rangle$ in order to denote the quadratic variation.

\begin{remark}
Suppose that $X$ is as in Definition~\ref{def:thick_quantum_wedge} and let $Z_t = \exp( \gamma X_t/2)$.  By It\^o's formula,
\begin{align*}
d\langle Z \rangle_{t} = \frac{\gamma^2}{2} Z_t^2 dt.
\end{align*}
By \cite[Proposition 3.4]{dms2014mating},  we see that if we reparameterize $Z$ by its quadratic variation then it evolves as a $\BES^{\delta}$ with 
\begin{align}
\label{eqn:bessel_dimension_wrt_weight}
\delta = 2 + \frac{2(Q-a)}{\gamma} = 1 + \frac{2}{\gamma^2}W.
\end{align}
This gives another way of sampling $h$ as follows.  Start with a $\BES^{\delta}$ process $Z$ with $\delta$ as in~\eqref{eqn:bessel_dimension_wrt_weight}.  Then we sample the projection of $h$ onto $\mathcal{H}_1(\strip)$ to be given by reparameterizing $2\gamma^{-1}\log(Z)$ to have quadratic variation $2dt$ and taking the horizontal translation so that it last hits $0$ at time $0$.  The projection onto $\mathcal{H}_2(\strip)$ is sampled as in Definition~\ref{def:thick_quantum_wedge} and independently of $Z$.  This is how we define the $a = Q$,  $W = \gamma^2/2$ quantum wedge.  When $\delta \geq 2$,  we call the resulting wedge a \emph{thick} quantum wedge.
\end{remark}

We can also consider quantum surfaces parameterized by a closed set (not necessarily homeomorphic to a disk) such that each component of its interior together with its prime-end boundary is homeomorphic to the closed unit disk,  and $h$ is defined as a distribution on each of these components.

\begin{definition}
\label{def:thin_quantum_wedge}
Fix $\gamma \in (0,2)$ and $W \in (-\gamma^2 / 2,  \gamma^2 / 2)$.  A quantum wedge of weight $W$ on $\strip$ (with marked points at $-\infty$ and $+\infty$) is the random distribution $h$ on $\strip$ that can be sampled as follows.  Let $Y$ be a $\BES^{\delta}$ process where $\delta$ is as in~\eqref{eqn:bessel_dimension_wrt_weight}.  Let $\mathcal{H}_1(\strip),  \mathcal{H}_2(\strip)$ be as before.  We sample a countable collection of distribution valued random variables $h_{e}$ on $\strip$ indexed by the excursions~$e$ of~$Y$ from~$0$ and take the projection of $h_{e}$ onto $\mathcal{H}_1(\strip)$ to be given by reparameterizing $2\gamma^{-1}\log(e)$ to have quadratic variation $2dt$ and the projection of $h_{e}$ onto $\mathcal{H}_2(\strip)$ to be sampled according to the law of the corresponding projection of a free boundary GFF on $\strip$ (with additive constant fixed so that its average on $[0,i\pi]$ vanishes) taken to be independent of the corresponding projection of the other excursions from $0$ of $Y$.  Note that $\delta \in (0,2)$ and in that case we call the quantum wedge \emph{thin} quantum wedge.
\end{definition}

In \cite{dms2014mating}, the thin quantum wedges are defined only when $W \in (0,\gamma^2/2)$.  The definition given in Definition~\ref{def:thin_quantum_wedge} is the same as in \cite{dms2014mating} except it is for $W \in (-\gamma^2/2,\gamma^2/2)$.  The difference between the regime $W \in (-\gamma^2/2,0]$ and $W \in (0,\gamma^2/2)$ is that for the latter the sum of the boundary lengths of the bubbles which make up the wedge is locally finite while in the former this is not the case.

\begin{remark}
Let $D \subseteq \C$ be a simply connected domain.  Then we can consider a quantum wedge on $D$ with distinct marked points $x,y \in \partial D$ by starting with a quantum wedge on $\strip$ and then applying~\eqref{eqn:change_of_coordinates} for some conformal transformation $\phi : \strip \rightarrow D$ such that $\phi(-\infty) = x$ and $\phi(+\infty) = y$.
\end{remark}

\subsection{$\SLE_{\kappa}(\rho)$ processes with $\kappa > 0$ and $\rho > -2 - \kappa/2$}
\label{subsec:slekapparho}

We will now review the definition of the $\SLE_\kappa(\rho)$ processes, first focusing on the forward case and then on the reverse case.  In order to differentiate between the former and the latter, we will use tildes for quantities associated with the reverse case.

\subsubsection{Forward $\SLE_\kappa(\rho)$}
Fix $\kappa > 0$,  $\rho > -2 - \kappa/2$, and let $\delta$ be such that
\begin{equation}
\label{eqn:delta_and_rho}
\delta = 1 + \frac{2(\rho + 2)}{\kappa}.
\end{equation}
Let $X$ be a $\BES^{\delta}$ process and let 
\begin{align*}
V_t = \frac{2}{\sqrt{\kappa}}\text{P.V.}\int_{0}^{t} \frac{1}{X_s} ds\quad \text{and} \quad W = V - \sqrt{\kappa}X.
\end{align*}
Then an $\SLE_{\kappa}(\rho)$ process is described by the solution to the \text{ODE}
\begin{align*}
\partial_{t}g_t(z) = \frac{2}{g_t(z) - W_t},\quad g_0(z) = z.
\end{align*}
For each $t \geq 0$ we let $\h_t$ be the domain of $g_t$ and $K_t = \h \setminus \h_t$ be the hull at time $t$ associated with the $\SLE_\kappa(\rho)$ process.  The location of the force point at time $0$ is given by $V_0$ and at time $t$ is given by $g_t^{-1}(V_t)$. 

When $\rho > -2$, we have that $\delta > 1$ and so the force point at time $t$ is located at the rightmost intersection of the hull at time $t$ with $\R$.  The continuity of the process in that case was shown in \cite{ms2016imag1}.  When $\rho \in (-2-\kappa/2,-2)$,  the force point does not stay in $\R$ because of the principal value correction and since $\delta \in (0,1)$.  The continuity of the process in the case that $\rho \in (-2-\kappa/2,\kappa/2-4]$ and $\kappa \in (2,4)$ was shown in \cite{msw2017clepercolations} while the continuity when $\rho \in (-2-\kappa/2,-2) \cap (\kappa/2-4,-2)$ and $\kappa \in (0,4)$ was shown in \cite{ms2019lightcone}. 

We can more generally consider the $\SLE_\kappa(\rho)$ processes with multiple force points, which we will denote by $\SLE_\kappa(\ul{\rho})$.  More specially, suppose that $\ul{x} = (x_1,\ldots,x_n)$ are distinct points in $\partial \h$ (where we consider $0_-$ and $0_+$ to be distinct) and that $\ul{\rho} = (\rho_1,\ldots,\rho_n) \in \R^n$.  Suppose that $B$ is a standard Brownian motion and let $W$ be the solution to the SDE
\begin{align}
\label{eqn:sle_kappa_rho_sde}
dW_t = \sqrt{\kappa} dB_t + \sum_{i=1}^n \frac{\rho_i}{W_t - V_t^i} dt, \quad dV_t^i = \frac{2}{V_t^i-W_t} dt, \quad V_0^i = x_i.
\end{align}
It is shown in \cite{ms2016imag1} that there is a unique solution~\eqref{eqn:sle_kappa_rho_sde} up until the so-called \emph{continuation threshold}, which is the first time $t$ that $\sum_{i : V_t^i = W_t} \rho_i = -2$ and that the corresponding Loewner flow is generated by a continuous curve up until this time.

\subsubsection{Reverse $\SLE_\kappa(\wt{\rho})$}
\label{subsubsec:reverse_sle_kappa_rho}

Fix $\kappa > 0$, $\wt{\rho} > 2-\kappa/2$, and let
\begin{equation}
\label{eqn:delta_and_rho_reverse}
\wt{\delta} = 1 + \frac{2(\wt{\rho}-2)}{\kappa}.
\end{equation}
Let
\[ \wt{V}_{t} = -\frac{2}{\sqrt{\kappa}}\text{P.V.} \int_{0}^{t}\frac{1}{\wt{X}_{s}}ds\quad\text{and}\quad \wt{W} = \wt{V} - \sqrt{\kappa}\wt{X}.\]
A reverse $\SLE_{\kappa}(\wt{\rho})$ (with the force point located at $0^{+}$) centered Loewner flow is defined to be  the  family of conformal maps ($\wt{f_{t}}$) given by $\wt{g_{t}}(z) - \wt{W_{t}}$ where ($\wt{g_{t}}$) solve the reverse Loewner equation
\begin{align}
\label{eqn:rev_loewner}
    \partial_{t}{\wt{g}_{t}}(z) = -\frac{2}{\wt{g}_{t}(z) - \wt{W}_{t}},\quad \wt{g}_0(z) = z.
\end{align}
We note that~\eqref{eqn:rev_loewner} has a unique solution for every fixed $z$ in $\h$ defined for all times $t$.

\subsection{Imaginary geometry}
\label{subsec:ig}

Fix $\kappa \in (0,4)$, $\kappa'=16/\kappa > 4$, and let
\begin{equation}
\label{eqn:chi_value}
\lambda = \frac{\pi}{\sqrt{\kappa}},\quad  \lambda' = \frac{\pi}{\sqrt{\kappa'}}, \quad\text{and}\quad \chi = \frac{2}{\sqrt{\kappa}} - \frac{\sqrt{\kappa}}{2}.
\end{equation}
It was shown in \cite{she2016zipper,ms2016imag1} (see also \cite{dub2009partition}) that the $\SLE_{\kappa}(\ul{\rho})$ processes with $\kappa \in (0,4)$ and $\rho > -2$ can be considered as the flow lines of the vector field $e^{ih / \chi}$,  where $h$ is a GFF with fixed boundary data.

Let us first explain how this works in the case of the $\SLE_\kappa$ processes.  Suppose that~$h$ is a GFF on~$\h$ with boundary conditions given by $-\lambda$ on $\R_-$ and $\lambda$ on $\R_+$.  Then it is shown in \cite{ms2016imag1} that there exists a unique coupling of $h$ with an $\SLE_\kappa$ process $\eta$ in $\h$ from $0$ to $\infty$ so that the following is true.  Let $(g_t)$ be the Loewner flow for $\eta$, $W$ its driving function, and let $f_t = g_t - W_t$ be its centered Loewner flow.  Then for every stopping time $\tau$ for $\eta$ we have that
\[ h \circ f_\tau^{-1} - \chi \arg (f_\tau^{-1})' \stackrel{d}{=} h.\]
In this coupling, we moreover have that $\eta$ is determined by $h$.

More generally, suppose that $\eta$ is an $\SLE_\kappa(\ul{\rho})$ process where $\ul{\rho} = (\ul{\rho}^L; \ul{\rho}^R)$, $\ul{\rho}^L = (\rho_1^L,\ldots,\rho_\ell^L)$, $\ul{\rho}^R = (\rho_1^R,\ldots,\rho_k^R)$ with force points located at $\ul{x} = (\ul{x}^L;\ul{x}^R)$ with $-\infty = x_{\ell+1}^L < x_\ell^L < \cdots < x_1^L \leq x_0^L = 0_-$ and $0_+ = x_0^R \leq x_1^R < \cdots < x_k^R < x_{k+1}^R = +\infty$.  Suppose that $h$ is a GFF on $\h$ with boundary conditions given by
\begin{align*}
 -\lambda\left(1+ \sum_{i=1}^j \rho_i^L \right) \quad&\text{in}\quad (x_{i+1}^L,x_i^L] \quad\text{for}\quad  0 \leq i \leq \ell \quad \quad\text{and}\\
 \lambda\left( 1+ \sum_{i=1}^j \rho_i^R \right) \quad&\text{in}\quad (x_i^R,x_{i+1}^R] \quad\text{for}\quad 0 \leq i \leq k.
\end{align*}
Let $(f_t)$ be the centered Loewner flow for $\eta$.  Then there exists a unique coupling of $\eta$ with $h$ so that for every stopping time $\tau$ for $\eta$ we have that $h \circ f_\tau^{-1} - \chi \arg (f_\tau^{-1})'$ is a GFF on $\h$ with boundary conditions given by
\begin{align*}
 -\lambda\left(1+ \sum_{i=1}^j \rho_i^L \right) \quad&\text{in}\quad (f_\tau(x_{i+1}^L),f_\tau(x_i^L)] \quad\text{for}\quad  0 \leq i \leq \ell \quad \quad\text{and}\\
 \lambda\left( 1+ \sum_{i=1}^j \rho_i^R \right) \quad&\text{in}\quad (f_\tau(x_i^R),f_\tau(x_{i+1}^R)] \quad\text{for}\quad 0 \leq i \leq k
\end{align*}
where we take the convention that $f_\tau(x_0^L) = 0_-$ and $f_\tau(x_0^R) = 0_+$.  In this coupling, we moreover have that $\eta$ is determined by $h$.

In both of the above situations, we refer to $\eta$ as the flow line of $h$ from $0$ to $\infty$.  We can more generally define the flow line of $h$ from $x$ to $\infty$ by translating $h$.  Also, if $\theta \in \R$ then we define the flow line of $h$ with angle $\theta$ to be the flow line of $h + \theta \chi$.

The manner in which the flow lines interact was determined in \cite{ms2016imag1}.  In particular, suppose that $\eta_{x_i}^{\theta_i}$ for $i=1,2$ are the the flow lines of $h$ starting from $x_1 < x_2$ with angles $\theta_1,\theta_2$.  If $\theta_1 > \theta_2$, then $\eta_{x_1}^{\theta_1}$ stays to the left of $\eta_{x_2}^{\theta_2}$.  If $\theta_1 = \theta_2$, then $\eta_{x_1}^{\theta_1}$ merges with $\eta_{x_2}^{\theta_2}$ and does not subsequently separate.  Finally, if $\theta_1 \in (\theta_2-\pi,\theta_2)$, then $\eta_{x_1}^{\theta_1}$ crosses $\eta_{x_2}^{\theta_2}$ upon intersecting and does not subsequently cross back.

The $\SLE_{\kappa'}$ curves are also coupled with the GFF in a similar manner but the interpretation of the coupling is different.  Suppose that $h$ is a GFF on $\h$ with boundary conditions given by $\lambda'$ on $\R_-$ and $-\lambda'$ on $\R_+$.  Then it is shown in \cite{ms2016imag1} that there exists a unique coupling of $h$ with an $\SLE_{\kappa'}$ process $\eta'$ in $\h$ from $0$ to $\infty$ so that the following is true.  Let $(g_t)$ be the Loewner flow for $\eta'$, $W$ its driving function, and let $f_t = g_t - W_t$ be its centered Loewner flow.  Then for every stopping time $\tau$ for $\eta'$ we have that
\[ h \circ f_\tau^{-1} - \chi \arg (f_\tau^{-1})' \stackrel{d}{=} h.\]
In this coupling, we moreover have that $\eta'$ is determined by $h$.

More generally, suppose that $\eta$ is an $\SLE_\kappa(\ul{\rho})$ process where $\ul{\rho} = (\ul{\rho}^L; \ul{\rho}^R)$, $\ul{\rho}^L = (\rho_1^L,\ldots,\rho_\ell^L)$, $\ul{\rho}^R = (\rho_1^R,\ldots,\rho_k^R)$ with force points located at $\ul{x} = (\ul{x}^L;\ul{x}^R)$ with $-\infty = x_{\ell+1}^L < x_\ell^L < \cdots < x_1^L \leq x_0^L = 0_-$ and $0_+ = x_0^R \leq x_1^R < \cdots < x_k^R < x_{k+1}^R = +\infty$.  Suppose that $h$ is a GFF on $\h$ with boundary conditions given by
\begin{align*}
 \lambda' \left(1+ \sum_{i=1}^j \rho_i^L \right) \quad&\text{in}\quad (x_{i+1}^L,x_i^L] \quad\text{for}\quad  0 \leq i \leq \ell \quad \quad\text{and}\\
 -\lambda' \left( 1+ \sum_{i=1}^j \rho_i^R \right) \quad&\text{in}\quad (x_i^R,x_{i+1}^R] \quad\text{for}\quad 0 \leq i \leq k.
\end{align*}
Let $(f_t)$ be the centered Loewner flow for $\eta'$.  Then there exists a unique coupling of $\eta'$ with $h$ so that for every stopping time $\tau$ for $\eta'$ we have that $h \circ f_\tau^{-1} - \chi \arg (f_\tau^{-1})'$ is a GFF on $\h$ with boundary conditions given by
\begin{align*}
 \lambda'\left(1+ \sum_{i=1}^j \rho_i^L \right) \quad&\text{in}\quad (f_\tau(x_{i+1}^L),f_\tau(x_i^L)] \quad\text{for}\quad  0 \leq i \leq \ell \quad \quad\text{and}\\
 -\lambda'\left( 1+ \sum_{i=1}^j \rho_i^R \right) \quad&\text{in}\quad (f_\tau(x_i^R),f_\tau(x_{i+1}^R)] \quad\text{for}\quad 0 \leq i \leq k
\end{align*}
where we take the convention that $f_\tau(x_0^L) = 0_-$ and $f_\tau(x_0^R) = 0_+$.  In this coupling, we moreover have that $\eta'$ is determined by $h$.

In both of the above situations, we refer to $\eta'$ as the counterflow line of $h$ from $0$ to $\infty$.

In \cite{ms2016imag1} the manner in which the flow and counterflow lines of the GFF interact is also described.  In particular, suppose that $h$ is a GFF on $\h$ and $\eta'$ is the counterflow line of $h$ from $\infty$ to $0$.  Then the left (resp.\ right) boundary of $\eta'$ is equal to the flow line of $h$ from $0$ to $\infty$ with angle $\pi/2$ (resp.\ $-\pi/2$).

It is also possible to consider flow lines starting from interior points \cite{ms2017imag4} but we will not need to consider this in the present work.

\subsection{Light cones}
\label{subsec:light_cones}

Now, we review some basic properties of \emph{light cones} and their relation with the $\SLE_{\kappa}(\rho)$ processes.

We will first review the definition of the light cone.  We will focus on the case that it is defined on~$\h$; the definition on another simply connected domain is given by conformal mapping.  Suppose that $h$ is a GFF on $\h$ and $x \in \partial \h$ with piecewise constant boundary conditions that change only finitely many times.  Fix angles $\theta_1 \leq \theta_2 \leq \theta_1 + \pi$.  The $\SLE_{\kappa}$ \emph{light cone} $\lightcone_x(\theta_1,\theta_2)$ of $h$ starting from $x$ with angle range $[\theta_1,\theta_2]$ is given by the closure of the set of points accessible by the flow lines of $h$ starting from $x$ with angles which are either rational and contained in $[\theta_1,\theta_2]$ or equal to $\theta_1$ or $\theta_2$ and which change angles a finite number of times and only at positive rational times.  More generally,  if $A$ is a segment of $\partial D$,  we let $\lightcone_A(\theta_1,\theta_2)$ be the set of points accessible by flow lines of $h$ starting from a countable dense subset of $A$ with angles which are either rational and contained in $[\theta_1,\theta_2]$ or equal to $\theta_1$ or $\theta_2$ which change angles only a finite number of times and only at positive rational times.  It was shown in \cite[Theorem 1.2]{ms2019lightcone} that the range of an $\SLE_{\kappa}(\rho)$ process for $\kappa \in (0,4)$,  $\rho \in [\kappa/2-4,-2) \cap (-2 - \kappa/2,-2)$ is equal to $\lightcone_{\R_-}(0,\theta)$ when the boundary of $h$ and~$\theta$ are chosen appropriately.  More precisely, the following was shown in \cite{ms2019lightcone}:
\begin{theorem}
\label{thm:ligthcone_and_sle}
Fix $\kappa \in (0,4)$,  $\rho \in [\kappa/2-4,-2)$ and $\rho > -2 - \kappa/2$,  and suppose that $h$ is a \text{GFF} on $\h$ whose boundary data is given by $-\lambda$ on $\R_-$ and $\lambda (1+\rho)$ on $\R_+$.  Let $\eta$ be an $\SLE_{\kappa}(\rho)$ process on $\h$ from $0$ to $\infty$ where its force point is located at $0_+$.  For each $t \geq 0$,  let $K_t$ denote the closure of the complement of the unbounded connected component of $\h \setminus \eta([0,t])$,  let $g_t : \h \setminus K_t \to \h$ be the unique conformal transformation with $g_t(z)-z \to 0$ as $z \to \infty$,  and let $(W,V)$ be the Loewner driving pair for $\eta$.  There exists a unique coupling of $h$ and $\eta$ such that the following is true.  For each $\eta$-stopping time $\tau$,  the conditional law of 
\begin{align*}
h \circ g_{\tau}^{-1} - \chi \arg(g_{\tau}^{-1})'
\end{align*}
given $\eta|_{[0,\tau]}$ is that of a \text{GFF} on $\h$ with boundary conditions given by 
\begin{align*}
h|_{(-\infty,W_{\tau}]} \equiv -\lambda,\quad h|_{(	W_{\tau},V_{\tau}]} \equiv \lambda,\quad \text{and} \quad h|_{(V_{\tau},\infty)}\equiv \lambda (1+\rho),
\end{align*}
where $\lambda = \pi / \sqrt{\kappa}$.

Moreover,  in the coupling $(h,\eta)$,  $\eta$ is a.s.\ determined by $h$.  Finally,  let 
\begin{equation}\label{eqn:theta_and_rho}
\theta = \theta_{\rho} = \pi \!\left ( \frac{\rho + 2}{\kappa / 2 - 2} \right ).
\end{equation}
Then the range of $\eta$ is a.s.\ equal to $\lightcone_{\R_-}(0,\theta)$.
\end{theorem}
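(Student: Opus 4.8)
The plan is to prove Theorem~\ref{thm:ligthcone_and_sle} in two stages: first establish the GFF coupling together with the measurability statement, and then identify the range of $\eta$ with the light cone $\lightcone_{\R_-}(0,\theta)$. Throughout I would use the Bessel description of $\SLE_\kappa(\rho)$: since $\rho \in (-2-\kappa/2,-2)$, the exponent in~\eqref{eqn:delta_and_rho} satisfies $\delta = 1 + 2(\rho+2)/\kappa \in (0,1)$, so $\eta$ is driven by $W = V - \sqrt{\kappa}\,X$ with $X$ a $\BES^{\delta}$ process and $V_t = (2/\sqrt{\kappa})\,\mathrm{P.V.}\!\int_0^t X_s^{-1}\,ds$; the principal value correction is the only feature not already covered by the imaginary geometry framework of~\cite{ms2016imag1}.

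For the coupling, I would run the usual martingale computation. Let $\mathfrak{h}_\tau$ be the harmonic function on $\h$ with boundary values $-\lambda$ on $(-\infty,W_\tau]$, $\lambda$ on $(W_\tau,V_\tau]$ and $\lambda(1+\rho)$ on $(V_\tau,\infty)$, pulled back by $g_\tau$ and corrected by $-\chi\arg(g_\tau^{-1})'$. Using It\^o's formula for the Loewner pair $(W,V)$ one checks that, paired against a fixed smooth compactly supported test function, $\tau\mapsto \mathfrak{h}_\tau$ is a continuous local martingale, with the drift cancelling precisely because $\rho$ is matched to $\delta$ through~\eqref{eqn:delta_and_rho}. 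To handle the principal value correction I would first carry this out with $X$ replaced by the $\epsilon$-$\BES^{\delta}$ approximations of Proposition~\ref{prop:epsilon_bessel}, for which $V$ is a genuine semimartingale and the computation is literally the one in~\cite{ms2016imag1}, obtaining a coupling $(h^\epsilon,\eta^\epsilon)$; then I would let $\epsilon\to 0$, using the $L^\infty$ convergence of the driving functions from Proposition~\ref{prop:epsilon_bessel} together with the continuity of $\eta$ from~\cite{ms2019lightcone} to pass to the limiting coupling and to upgrade the Markov property from deterministic times to all $\eta$-stopping times. That $\eta$ is determined by $h$ would then follow from the standard local-set argument: away from the Lebesgue-null set of times at which the tip lies on $\R$ the law is mutually absolutely continuous with an $\SLE_\kappa(\rho)$ with $\rho>-2$, for which measurability is known~\cite{ms2016imag1}, and one patches the pieces together using continuity of $\eta$ and the fact that its behaviour near $0$ and near each bubble-closing point is characterized by the conditional GFF.

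For the identification of the range, the point is that $\theta$ from~\eqref{eqn:theta_and_rho} is precisely the angle for which the conditional boundary data $-\lambda$ on one side of the tip and $\lambda(1+\rho)$ on the other coincide with the data seen by a pair of extremal flow lines of angles $0$ and $\theta$. Feeding this into the coupling: whenever $\eta$ is tracing the arc that disconnects a bubble from $\infty$, the conditional GFF read off from $g_\tau$ has boundary data of exactly the three-interval form above, so that arc is a segment of a flow line of the restricted field with extremal angle $0$ or $\theta$; iterating this over the nested family of bubbles cut off by $\eta$, and noting that flow lines started from points in the interior of such arcs realize the intermediate rational angles, gives $\eta\subseteq \lightcone_{\R_-}(0,\theta)$. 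For the reverse inclusion I would invoke the flow-line interaction rules of~\cite{ms2016imag1}: the leftmost angle-$0$ flow line and the extremal angle-$\theta$ flow line from $\R_-$ are the two sides of the hull generated by $\eta$ (by the merging rule), every flow line of intermediate angle is trapped between them (by monotonicity in the angle and the non-crossing rule), hence lies in the closure of the range of $\eta$; a recursion over bubbles again promotes this to all of $\lightcone_{\R_-}(0,\theta)$. Continuity of $\eta$ from~\cite{ms2019lightcone} is used throughout to make sense of the range and to localize these arguments.

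The step I expect to be the main obstacle is the recursive bubble argument for the two inclusions: one must reconcile the branching structure of the light cone with the self-intersecting, ``creeping'' dynamics of $\eta$ --- which draws an entire outer arc of a component before discovering any sub-component --- and arrange for both inclusions to hold on a single full-measure event simultaneously for the countable family of admissible angles and starting points. A secondary difficulty is the $\epsilon\to 0$ passage in the coupling step: at the approximate level the Markov property is only exact up to the reflection jumps of the $\epsilon$-$\BES^{\delta}$ process, so some care is needed to show that the limit is exactly a GFF with the stated boundary data for every stopping time. The remaining parts are a routine, if lengthy, adaptation of~\cite{ms2016imag1}.
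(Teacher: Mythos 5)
You should first be aware that the paper does not prove Theorem~\ref{thm:ligthcone_and_sle} at all: it appears in Section~\ref{subsec:light_cones} as a preliminary, quoted from \cite[Theorem~1.2]{ms2019lightcone}, so there is no in-paper argument to compare against. Measured against the actual proof in \cite{ms2019lightcone}, your proposal runs in the opposite direction. There, the light cone $\lightcone_{\R_-}(0,\theta)$ is constructed directly from the flow lines of $h$, an ordered (chronological) exploration path of the light cone is built, and this path is shown to be continuous with Loewner driving pair given by $W=V-\sqrt{\kappa}X$ for a $\BES^\delta$ process $X$ with $\delta=1+2(\rho+2)/\kappa\in(0,1)$; the coupling, the fact that $\eta$ is determined by $h$, and the identification of the range all come out of that single construction, and the continuity of $\SLE_\kappa(\rho)$ in this regime is established simultaneously rather than being available as a prior input (so quoting it freely, as you do, partly inverts the logical order of that paper). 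Your route instead starts from the $\SLE_\kappa(\rho)$ process, builds the coupling by an It\^o computation with the $\epsilon$-$\BES^\delta$ approximation, and only afterwards tries to identify the range.

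Two steps in your sketch are genuine gaps rather than deferred routine work. First, the measurability argument: you assert that away from the times at which the tip lies on $\R$ the law is mutually absolutely continuous with an $\SLE_\kappa(\rho)$ process with $\rho>-2$ and that one can patch. But, as the introduction of this paper itself stresses, for $\rho\le-2$ the process is \emph{not} absolutely continuous with respect to the $\rho>-2$ (or ordinary $\SLE_\kappa$) processes away from the boundary; and even granting local absolute continuity on single excursion intervals of the Bessel process away from $0$, the determination of $\eta$ by $h$ is nontrivial precisely at the collision times, where the curve chooses which bubble to close and where to continue, so the patching is not a routine local-set argument. Second, the identification of the range with $\lightcone_{\R_-}(0,\theta)$ --- the recursive bubble argument you yourself flag as the main obstacle --- is the entire content of the theorem: the inclusion of the light cone in the range requires absorbing flow lines with arbitrary admissible angle sequences started from a dense subset of $\R_-$ into the hull of $\eta$, and the reverse inclusion requires realizing every arc drawn by $\eta$ as such a flow line segment; neither follows formally from the interaction rules of \cite{ms2016imag1}, which are formulated for couplings in the $\rho>-2$ range, and this difficulty is exactly why \cite{ms2019lightcone} builds the curve out of the light cone rather than the other way around. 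As it stands, the proposal is a plausible outline, but the parts it defers are the theorem itself.
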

Theorem~\ref{thm:ligthcone_and_sle} allows to interpret an $\SLE_{\kappa}(\rho)$ process with the above range of values of $\rho$ as an ordered light cone of flow lines of $e^{ih / \chi}$.

\section{SLE/GFF couplings}
\label{sec:slegff_couplings}

The purpose of this section is to prove two results related to couplings of $\SLE$ with the GFF.  In Section~\ref{subsec:reverse_coupling} we will focus on the so-called reverse coupling and will establish a version of it which holds for reverse $\SLE_\kappa(\wt{\rho})$ processes with $\wt{\rho} < 2$.  In Section~\ref{subsec:law_of_sle_excursion} we use the forward coupling of $\SLE$ with the GFF in order to describe the law of an excursion of an $\SLE_\kappa(\rho)$ process in the light cone regime.

\subsection{Reverse coupling with $\wt{\rho} < 2$}
\label{subsec:reverse_coupling}

In this subsection we will state and prove a version of the reverse coupling of $\SLE_\kappa(\wt{\rho})$ with the GFF when $\wt{\rho} < 2$.  In the case of ordinary reverse $\SLE_\kappa$, this was first proved in \cite{she2016zipper} and it was extended to the case of the reverse $\SLE_\kappa(\wt{\rho})$ processes with $\wt{\rho} > 2$ in \cite{dms2014mating}.  In both \cite{she2016zipper} and \cite{dms2014mating}, the process which drives the reverse $\SLE$ is a semimartingale which allows one to use the tools of ordinary stochastic calculus.  In contrast, when $\wt{\rho} < 2$ the driving process for a reverse $\SLE_\kappa(\wt{\rho})$ process is not a semimartingale and this introduces some extra complications in the version that we will prove.

Throughout, we fix $\kappa >0$, $\wt{\rho} \in (2-\kappa/2 ,2)$, and let
\[ \gamma=\min\left( \sqrt{\kappa},\frac{4}{\sqrt{\kappa}} \right) \quad\text{and}\quad Q=\frac{2}{\gamma}+\frac{\gamma}{2}.\]
(Recall from Section~\ref{subsubsec:reverse_sle_kappa_rho} that reverse $\SLE_\kappa(\wt{\rho})$ is only defined for $\wt{\rho} > 2-\kappa/2$.)  Let $G_\h^N$ be the Neumann Green's function on $\h$ (recall~\eqref{eqn:neumann_greens}).  Let $(\wt{f}_t)$ be the centered Loewner flow associated with a reverse $\SLE_\kappa(\wt{\rho})$ process and let
\begin{align*}
\wt{G}_{t}(y,z) = G_\h^N(\wt{f}_{t}(y),\wt{f}_{t}(z)),\quad
\wh{\Fh}_{0}(z) = \frac{2}{\sqrt{\kappa}}\log|z|,\quad \text{and}\quad \wh{\Fh}_{t}(z) = \wh{\Fh}_{0}(\wt{f}_{t}(z)) + Q\log|\wt{f}_{t}'(z)|.
\end{align*}

The following is the main theorem of this subsection and serves to extend \cite[Theorem~1.2]{she2016zipper} and \cite[Theorem~5.1]{dms2014mating}.

\begin{theorem}
\label{thm:reverse_coupling}
Let $\kappa$, $\wt{\rho}$ be as above, let $\wt{W}$ be the driving function associated with a reverse $\SLE_{\kappa}(\wt{\rho})$ with a single force point located at $0_+$ of weight $\wt{\rho}$, and let $(\wt{f}_{t})$ be the centered reverse Loewner evolution driven by $\wt{W}$. For each $t \geq 0$, let 
\begin{align*}
\wt{\Fh}_{t}(z) = \wh{\Fh}_{t}(z) + \frac{\wt{\rho}}{2\sqrt{\kappa}}\wt{G}_{t}(0_+,z)
\end{align*}
and let $h$ be an instance of the free boundary $\text{GFF}$ on $\h$ independent of $\wt{W}$. Suppose that $\wt{\tau}$ is an a.s.\ finite stopping time for the filtration generated by the Brownian motion which drives $\wt{W}$. Then
\begin{align*}
\wt{\Fh}_{0} + h \stackrel{d}{=} \wt{\Fh}_{\wt{\tau}} + h \circ \wt{f}_{\wt{\tau}}
\end{align*}
where the left and right sides are viewed as distributions modulo additive constant.
\end{theorem}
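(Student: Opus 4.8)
The plan is to follow the strategy of \cite{she2016zipper} and \cite[Theorem~5.1]{dms2014mating}: reduce the claimed identity in distribution modulo additive constant to the statement that a certain Gaussian field has, at time $\wt\tau$, the same covariance structure and mean (modulo constants) as at time $0$, and that this is preserved by the reverse Loewner dynamics. Concretely, since $h$ is a free boundary GFF on $\h$ with covariance $G_\h^N$, the field $\wt\Fh_t + h\circ\wt f_t$ is Gaussian with mean $\wt\Fh_t$ and covariance $(y,z)\mapsto G_\h^N(\wt f_t(y),\wt f_t(z)) = \wt G_t(y,z)$. Thus it suffices to show that, as distributions modulo additive constant, both the mean $\wt\Fh_t$ and the covariance $\wt G_t$ are (in an appropriate sense) a martingale in $t$, so that the law at the stopping time $\wt\tau$ agrees with the law at time $0$; the optional stopping / localization argument handling the a.s.\ finite stopping time is then identical to the one in \cite{she2016zipper,dms2014mating}.

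First I would record the It\^o computations. Writing $\wt W_t = \wt V_t - \sqrt\kappa\,\wt X_t$ where $\wt X$ is a $\BES^{\wt\delta}$ with $\wt\delta = 1 + 2(\wt\rho-2)/\kappa \in (0,1)$ and $\wt V_t = -\tfrac{2}{\sqrt\kappa}\,\pv\!\int_0^t \wt X_s^{-1}\,ds$, the driving process $\wt W$ solves (in principal value form) an SDE whose ``drift'' is exactly the singular term $\tfrac{\wt\rho}{\wt W_t - \wt V_t}\,dt$ plus a $\sqrt\kappa$-Brownian part, mirroring the forward $\SLE_\kappa(\wt\rho)$ relation~\eqref{eqn:delta_and_rho_reverse}. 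Differentiating $\wh\Fh_t(z) = \tfrac{2}{\sqrt\kappa}\log|\wt f_t(z)| + Q\log|\wt f_t'(z)|$ along the reverse Loewner flow~\eqref{eqn:rev_loewner}, one gets a clean expression whose martingale part is proportional to $\re\!\big(\tfrac{1}{\wt f_t(z)}\big)\,d\wt W_t$, together with a drift that must be cancelled. Similarly $d\wh\Fh_t$ combines with the term $\tfrac{\wt\rho}{2\sqrt\kappa}\,d\wt G_t(0_+,z)$; the point of adding the Green's-function correction $\tfrac{\wt\rho}{2\sqrt\kappa}\wt G_t(0_+,z)$ to form $\wt\Fh_t$ is precisely to cancel the extra drift coming from the $\wt\rho$-force term, just as in \cite{dms2014mating}. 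The covariance identity is the Hadamard-type variation formula: $d\wt G_t(y,z)$ has a drift that is cancelled against the drift in the mean, so that the total ``field'' $\wt\Fh_t + h\circ\wt f_t$ is a (distribution-valued) local martingale modulo additive constant.

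The main obstacle — and the reason this is not a routine transcription of \cite{dms2014mating} — is that for $\wt\rho < 2$ the dimension $\wt\delta$ lies in $(0,1)$, so $\wt X$ hits $0$ on a set of positive measure of times, the integral $\int_0^t \wt X_s^{-1}\,ds$ diverges, and $\wt W$ is \emph{not} a semimartingale; the principal value correction~\eqref{eq:principal_value_correction_equation} must be carried through every It\^o computation. My plan for this is to approximate: replace $\wt X$ by the $\eps$-Bessel process $\wt X^\eps$ of Proposition~\ref{prop:epsilon_bessel}, which is a genuine semimartingale plus a pure-jump part $J^\eps$ with jumps of size $\eps$, run the reverse-$\SLE$ coupling argument of \cite{dms2014mating} verbatim at the $\eps$-level (the jumps contribute error terms controlled by $\sum_{t_i \le T}\eps^2 \to 0$, item~\eqref{it:jtsquared} of Proposition~\ref{prop:epsilon_bessel}), and then pass to the limit $\eps \to 0$ using the $L^\infty$ convergence $\wt X^\eps \to \wt X$ together with continuity of the Loewner flow, the Green's function, and $\wh\Fh_t$ in the driving function. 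The delicate points are (a) showing the error terms from the $\eps$-jumps vanish in the limit in the relevant topology on distributions modulo constants — here the $J_T^{\eps^2}\to 0$ estimate is exactly what one needs, since the quadratic-variation-type corrections scale like $\eps^2$ per jump — and (b) justifying that the local-martingale property survives the limit and upgrades to the optional-stopping identity at the a.s.\ finite time $\wt\tau$, for which I would use a standard localization at exit times of $\wt f_t(z)$ from compacts of $\h$, exactly as in \cite[Section~5]{dms2014mating}, now applied to the $\eps$-approximation uniformly in $\eps$ and then passed to the limit. Once the local-martingale-modulo-constants statement is established, the conclusion $\wt\Fh_0 + h \stackrel{d}{=} \wt\Fh_{\wt\tau} + h\circ\wt f_{\wt\tau}$ follows from the fact that a Gaussian field is determined by its mean and covariance.
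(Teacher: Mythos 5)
Your proposal matches the paper's proof in all essential respects: the paper likewise handles the non-semimartingale driving function by approximating the Bessel part with the $\epsilon$-Bessel process of Proposition~\ref{prop:epsilon_bessel}, applying an It\^o formula for semimartingales with jumps whose correction terms are $O(\wt{J}_T^{\epsilon^2}) \to 0$, and then running the mean-martingale/variance-compensation scheme of \cite[Section~5]{dms2014mating} (the analogues of Lemmas~\ref{lem:helping_lemma}--\ref{lem:variation_formula}) together with optional stopping, using that a Gaussian field is determined by its mean and covariance. The only, immaterial, difference is organizational: the paper keeps the true reverse Loewner flow throughout and passes to the $\epsilon \to 0$ limit once, at the level of a single pointwise log-identity which yields $d\wt{\Fh}_t(z) = \re\frac{2}{\wt{f}_t(z)}\,dB_t$, rather than running the entire coupling argument at the $\epsilon$-level and taking the limit at the end.
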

Recall from Section~\ref{subsubsec:reverse_sle_kappa_rho} that $\wt{f}_t(0_+)/\sqrt{\kappa}$ is a $\BES^{\wt{\delta}}$ process with
\[ \wt{\delta} = 1+ \frac{2(\wt{\rho}-2)}{\kappa}.\]
The proofs of \cite[Theorem~1.2]{she2016zipper} and \cite[Theorem~5.1]{dms2014mating} make use of stochastic calculus and require that $\int_0^t \wt{f}_s(0_+)^{-1}ds$ is finite for all $t\geq 0$ a.s.  However, when $\wt{\rho} \in (2-\kappa/2, 2)$ we have that $\wt{\delta} \in (0,1)$ so that this integral is infinite a.s.\ and this fact is the key difference in the proof of Theorem~\ref{thm:reverse_coupling} in comparison to that given in \cite{she2016zipper,dms2014mating}.  In order to overcome this difficulty,  we are going to use Proposition~\ref{prop:epsilon_bessel} to approximate $\wt{f}_t(0_+)/\sqrt{\kappa}$ using an $\epsilon$-$\BES^{\wt{\delta}}$ process and then take a limit as $\epsilon \to 0$.  Moreover,  we will also make use of the following generalization of It\^o's formula for semimartingales with discontinuities.

\begin{theorem}
\label{thm:general_Ito}
Suppose that $Z_{t} = X_{t} + iY_{t}$ is a semimartingale (not necessarily continuous) with $X_{t} = \re(Z_{t})$ and $Y_{t} = \im(Z_{t})$ and $Z_{t} \in U$ for all $t$ for $U \subseteq \C$ open. Then if $f$ is an analytic function on $U$, it holds that
\begin{align*}
    f(Z_{t}) - f(Z_{0}) &= \int_{0}^{t}f'(Z_{s})dZ_{s} + \frac{1}{2}\int_{0}^{t}f''(Z_{s^{-}})d\langle Z^{c}\rangle_{s}\\
    &+\sum_{0 \leq s \leq t}\left( f(Z_{s}) - f(Z_{s^{-}}) - f'(Z_{s^{-}})\Delta Z_{s}\right)
\end{align*}
\end{theorem}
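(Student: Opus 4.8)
The statement is the complex-variable repackaging of the classical It\^o formula for c\`adl\`ag semimartingales, so the plan is to apply the real-valued version to $(x,y) \mapsto f(x+iy)$ and then use the Cauchy--Riemann equations to collect terms. Write $Z_t = X_t + i Y_t$ as an $\R^2$-valued c\`adl\`ag semimartingale, let $X^c,Y^c$ denote the continuous local martingale parts of $X,Y$, and interpret $\partial_x f, \partial_y f, \partial_{xx} f$, etc., as partial derivatives of the map $(x,y)\mapsto f(x+iy)$, which is $C^\infty$ on $U$ since $f$ is analytic. The classical It\^o formula for c\`adl\`ag semimartingales then gives
\begin{align*}
f(Z_t) - f(Z_0) &= \int_0^t \partial_x f(Z_{s^-})\,dX_s + \int_0^t \partial_y f(Z_{s^-})\,dY_s\\
&\quad + \frac12 \int_0^t \Big( \partial_{xx} f(Z_{s^-})\,d\langle X^c\rangle_s + 2\partial_{xy} f(Z_{s^-})\,d\langle X^c,Y^c\rangle_s + \partial_{yy} f(Z_{s^-})\,d\langle Y^c\rangle_s \Big)\\
&\quad + \sum_{0 \le s \le t}\Big( f(Z_s) - f(Z_{s^-}) - \partial_x f(Z_{s^-})\Delta X_s - \partial_y f(Z_{s^-})\Delta Y_s \Big),
\end{align*}
where, as usual, the stochastic integrals are understood with the predictable (left-limit) versions of the integrands.

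I would then substitute the identities $\partial_x f = f'$, $\partial_y f = i f'$, $\partial_{xx} f = f''$, $\partial_{xy} f = i f''$, and $\partial_{yy} f = -f''$, which hold because $f$ is holomorphic. The first-order terms combine to $\int_0^t f'(Z_{s^-})\,dZ_s$; the three second-order terms combine to $\frac12 \int_0^t f''(Z_{s^-})\,d\langle Z^c\rangle_s$, where $\langle Z^c\rangle := \langle X^c\rangle - \langle Y^c\rangle + 2i\langle X^c, Y^c\rangle$ is the bilinear (not Hermitian) bracket, i.e.\ the one determined by $d\langle Z^c\rangle = (dZ^c)^2$; and the summand in the jump term becomes $f(Z_s) - f(Z_{s^-}) - f'(Z_{s^-})\Delta Z_s$. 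Since $\langle Z^c\rangle$ is continuous, one may freely replace $f''(Z_{s^-})$ by $f''(Z_s)$ in the second-order integral (and similarly $f'(Z_{s^-})$ by $f'(Z_s)$ against $dZ^c$), which recovers exactly the asserted formula.

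The only point requiring care is that $f$ is a priori defined only on the open set $U$, whereas the global It\^o formula is typically stated for functions defined on all of $\R^2$; I would handle this by localization. For fixed $t$, since $s \mapsto Z_s$ is c\`adl\`ag and $U$-valued, the set $\{Z_s : 0 \le s \le t\} \cup \{Z_{s^-} : 0 < s \le t\}$ has compact closure contained in $U$ (this uses that a c\`adl\`ag path on $[0,t]$ has only finitely many jumps exceeding any given size). Choosing compacts $K_n \uparrow U$ and setting $T_n = \inf\{s : Z_s \notin K_n \text{ or } Z_{s^-} \notin K_n\}$, we have $T_n \uparrow \infty$ a.s., and on $[0,T_n]$ we may replace $f$ by a compactly supported $C^2$ function on $\R^2$ agreeing with $f$ on a neighbourhood of $K_n$ without changing any term, since all terms depend on $f$ only through the values of $f, f', f''$ along the path and its left limits. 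Letting $n \to \infty$ completes the argument. I do not anticipate a genuine obstacle here: the content is entirely standard, and the only mildly delicate step is the compactness observation used for the localization.
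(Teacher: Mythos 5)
Your derivation is correct, and it is worth noting that the paper itself offers no proof of Theorem~\ref{thm:general_Ito}: it is recorded there as a standard fact (the It\^o formula for general, possibly discontinuous, semimartingales in complex-analytic form) and then used in the computation leading to~\eqref{eqn:basic_equation}. Your route --- apply the real two-dimensional It\^o formula for c\`adl\`ag semimartingales to $(x,y)\mapsto f(x+iy)$, combine terms via $\partial_x f=f'$, $\partial_y f=if'$, etc., identify $\langle Z^c\rangle$ as the bilinear bracket with $d\langle Z^c\rangle=(dZ^c)^2$, and localize via exit times from compacts exhausting $U$ --- is exactly the standard argument one would cite, and your care about using the predictable (left-limit) integrands is the right reading of the loosely written first integral in the statement. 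The only point to flag is that your localization step asserts that the closure of $\{Z_s\}\cup\{Z_{s^-}\}$ is contained in $U$, which requires the left limits $Z_{s^-}$ to lie in $U$ as well; this is not literally implied by the hypothesis ``$Z_t\in U$ for all $t$'' (a left limit could sit on $\partial U$), so strictly one should add the standard assumption that $Z_-$ also takes values in $U$ --- a looseness already present in the theorem as stated in the paper, and harmless in the application there.
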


Let us now perform some initial calculations which will be used in the proof of Theorem~\ref{thm:reverse_coupling}.  Let~$B$ be the Brownian motion which drives $\wt{X}$ and let $\wt{X}^{\epsilon}$ be the solution of~\eqref{eq:approximate_bessel_sde}. Let also $\wt{J}_t^\epsilon$ be as in~\eqref{eq:approximate_bessel_sde} and $\wt{J}_t^{\epsilon^2}$ be as in part~\eqref{it:jtsquared} of Proposition~\ref{prop:epsilon_bessel} (so that $\wt{J}_t^{\epsilon^2}$ is $\epsilon^2$ times the number of $\epsilon$-jump discontinuities of $X_t^\epsilon$ up to and including time $t$). We consider the processes 
\begin{align*}
    \wt{Y}^\epsilon = \frac{2}{\wt{\delta} - 1}(\wt{X}^{\epsilon} - B) \quad\text{and}\quad
    \wt{W}^{\epsilon} = -\frac{2}{\sqrt{\kappa}}\wt{Y}^{\epsilon} - \sqrt{\kappa}\wt{X}^{\epsilon}
\end{align*}
By Proposition~\ref{prop:epsilon_bessel}, we can couple $\wt{X}^{\epsilon}$ and $\wt{X}$ such that $\wt{X}^{\epsilon}$ converges to $\wt{X}$ as $\epsilon \to 0$ uniformly on compact intervals a.s. Fix $z \in \h$ and let
\begin{align*}
    \wt{Z}_{t}^{\epsilon} = \wt{g}_{t}(z) + \frac{2}{\sqrt{\kappa}}\wt{Y}_{t}^{\epsilon} + \sqrt{\kappa}\wt{X}_{t}^{\epsilon}.
\end{align*}
Note that $\wt{Z}^{\epsilon}$ converges to $\wt{f}_{t}(z)$ as $\epsilon \to 0$ uniformly on compact intervals a.s.  Since
\[ \langle \wt{Z}^{\epsilon}\rangle_{t} = \kappa t + \left(\sqrt{\kappa} + \frac{4}{\sqrt{\kappa}(\wt{\delta} - 1)}\right)^2 \wt{J}_{t}^{\epsilon^{2}},\]
by applying Theorem~\ref{thm:general_Ito} we obtain that 
\begin{align*}
\frac{2}{\sqrt{\kappa}}\left( \log \wt{Z}_{t}^{\epsilon} - \log\wt{Z}_{0}^{\epsilon}\right) = \wt{\alpha}_{t}^{\epsilon} + \frac{2}{\sqrt{\kappa}}\int_{0}^{t}\frac{1}{\wt{Z}_{s^{-}}^{\epsilon}}d \wt{Z}_{s}^{\epsilon} - \sqrt{\kappa}\int_{0}^{t}\frac{1}{(\wt{Z}_{s^{-}}^{\epsilon})^{2}}ds
\end{align*}
 where 
\begin{align*}
\wt{\alpha}_{t}^{\epsilon} = \frac{2}{\sqrt{\kappa}}\sum_{0 \leq s \leq t}\left( \log \wt{Z}_{s}^{\epsilon} - \log \wt{Z}_{s^-}^{\epsilon} - \frac{\Delta \wt{Z}_{s}^{\epsilon}}{\wt{Z}_{s^{-}}^{\epsilon}}\right).
\end{align*}
Similarly we have that
\[ \langle \wt{A}^{\epsilon}\rangle_{t} = \frac{16 \wt{J}_{t}^{\epsilon^{2}}}{\kappa (\wt{\delta} -1)^{2}} \quad\text{where}\quad \wt{A}_{t}^{\epsilon} = \wt{g}_t(z) + \frac{2}{\sqrt{\kappa}} \wt{Y}_{t}^{\epsilon}\]
and hence by applying Theorem~\ref{thm:general_Ito} again we obtain that 
\begin{align*}
\frac{\wt{\rho}}{\sqrt{\kappa}}\left( \log \wt{A}_{t}^{\epsilon} - \log\wt{A}_{0}^{\epsilon}\right) = \wt{\beta}_{t}^{\epsilon} + \frac{\wt{\rho}}{\sqrt{\kappa}}\int_{0}^{t}\frac{1}{\wt{A}_{s^-}^{\epsilon}}d\wt{A}_{s}^{\epsilon}
\end{align*}
where 
\begin{align*}
\wt{\beta}_{t}^{\epsilon} = \frac{\wt{\rho}}{\sqrt{\kappa}}\sum_{0 \leq s \leq t}\left(\log \wt{A}_{s}^{\epsilon} - \log \wt{A}_{s^-}^{\epsilon} - \frac{\Delta \wt{A}_{s}^{\epsilon}}{\wt{A}_{s^-}^{\epsilon}}\right).
\end{align*}
Let $(t_i)$ be as in~\eqref{eq:approximate_bessel_sde}.  Since $\wt{X}_{t_{i}^-}^{\epsilon} = 0$ we have that $\wt{A}_{t_{i}^{-}}^{\epsilon} = \wt{Z}_{t_{i}^-}^{\epsilon}$ and so 
\begin{align}
        &\frac{2}{\sqrt{\kappa}}\left( \log \wt{Z}_{t}^{\epsilon} - \log \wt{Z}_{0}^{\epsilon} \right) - \frac{\wt{\rho}}{\sqrt{\kappa}} \left( \log\wt{A}_{t}^{\epsilon} - \log \wt{A}_{0}^{\epsilon} \right) \nonumber\\
        =&(\wt{\alpha}_{t}^{\epsilon} - \wt{\beta}_{t}^{\epsilon})  +\int_{0}^{t}\frac{2}{\wt{Z}_{s^-}^{\epsilon}}dB_{s} - \frac{2}{\sqrt{\kappa}}\int_{0}^{t}\frac{2}{\wt{Z}_{s^-}^{\epsilon}\wt{f}_{s}(z)}ds\nonumber\\
        &-\sqrt{\kappa}\int_{0}^{t}\frac{1}{(\wt{Z}_{s^-}^{\epsilon})^{2}}ds + \frac{2\wt{\rho}}{\sqrt{\kappa}}\int_{0}^{t}\frac{\wt{Z}_{s^-}^{\epsilon} - \wt{f}_{s}(z)}{\wt{A}_{s^-}^{\epsilon}\wt{f}_{s}(z) \wt{Z}_{s^-}^{\epsilon}}ds.\label{eqn:basic_equation}
\end{align}

Fix $T > 0$. It is easy to see that
\[ \log \wt{Z}_{t_i}^{\epsilon} - \log \wt{Z}_{t_{i}^-}^{\epsilon} - \frac{\Delta \wt{Z}_{t_i}^{\epsilon}}{\wt{Z}_{t_{i}^-}^{\epsilon}} = O((\Delta \wt{Z}_{t_i}^{\epsilon})^2) \quad\text{for all}\quad  t_i \leq T,\]
where the implicit constant is random but independent of $\epsilon$. Since
\[ \Delta \wt{Z}_{t_i}^{\epsilon} = \left( \sqrt{\kappa} + \frac{4}{\sqrt{\kappa}(\wt{\delta} - 1)} \right) \epsilon\]
we obtain that $\wt{\alpha}_{T}^{\epsilon} = O(\wt{J}_{T}^{\epsilon^2})$, where the implicit constant is random and independent of $\epsilon$. Therefore by applying Proposition~\ref{prop:epsilon_bessel} we obtain that $\wt{\alpha}_{t}^{\epsilon}$ converges to zero as $\epsilon \to 0$ uniformly on compact intervals a.s. A similar analysis holds for $\wt{\beta}_{t}^{\epsilon}$.  Moreover,  it is easy to see that
\[ \int_{0}^{t}\frac{1}{\wt{Z}_{s^-}^{\epsilon}}dB_{s} \to \int_{0}^{t}\frac{1}{\wt{f}_{s}(z)}dB_{s} \quad\text{in}\quad L^{2} \quad\text{as}\quad \epsilon \to 0.\]
In particular, the convergence holds in probability. Therefore the above observations imply that the right hand side of~\eqref{eqn:basic_equation} converges in probability as $\epsilon \to 0$ to 
\begin{align}
\label{eq:RHS_convergence}
    \int_{0}^{t} \frac{2}{\wt{f}_{s}(z)}dB_{s} - Q\int_{0}^{t} \frac{2}{(\wt{f}_{s}(z))^2}ds
\end{align}
while the left hand side of~\eqref{eqn:basic_equation} converges a.s.\ as $\epsilon \to 0$ to 
\begin{align}
\label{eq:LHS_convergence}
    \frac{2}{\sqrt{\kappa}}\left(\log \wt{f}_{t}(z)  - \log z\right) - \frac{\wt{\rho}}{\sqrt{\kappa}}\left(\log(\wt{f}_{t}(z) - \wt{f}_{t}(0^{+})) - \log z \right)
\end{align}
Hence the quantities in~\eqref{eq:RHS_convergence} and~\eqref{eq:LHS_convergence} are equal for all $t$  a.s. 

Next, we follow the same strategy as in \cite[Section~5.1]{dms2014mating} in order to prove Theorem~\ref{thm:reverse_coupling}.  We will restate the lemmas used in order to make clear how the equality of the expressions in~\eqref{eq:RHS_convergence} and~\eqref{eq:LHS_convergence} is used.

\begin{lemma}[$\text{\cite[Lemma~5.2]{dms2014mating}}$]
\label{lem:helping_lemma}
For each $z \in \h$, we have that 
\begin{align*}
d\wt{\Fh}_{t}(z) = \re\frac{2}{\wt{f}_{t}(z)}dB_{t}
\end{align*}
For each $y,z \in \h$ distinct, we have that 
\begin{align*}
d\wt{G}_{t}(y,z) = -\re\left(\frac{2}{\wt{f}_{t}(y)}\right)\re\left(\frac{2}{\wt{f}_{t}(z)}\right)dt
\end{align*}
\end{lemma}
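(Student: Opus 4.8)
The plan is to deduce both identities from the scalar identity established just above the lemma statement, namely that a.s.\ for all $t \geq 0$ one has
\begin{align*}
\frac{2}{\sqrt{\kappa}}\big(\log \wt{f}_{t}(z) - \log z\big) - \frac{\wt{\rho}}{\sqrt{\kappa}}\big(\log(\wt{f}_{t}(z)-\wt{f}_{t}(0_{+})) - \log z\big) = \int_{0}^{t}\frac{2}{\wt{f}_{s}(z)}\,dB_{s} - Q\int_{0}^{t}\frac{2}{(\wt{f}_{s}(z))^{2}}\,ds,
\end{align*}
combined with elementary manipulations of the reverse Loewner flow that are purely deterministic in $t$ (no Brownian motion enters). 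The reason a route of this shape is necessary, rather than a direct application of It\^o's formula as in \cite{she2016zipper,dms2014mating}, is that $\wt{W}$ is not a semimartingale when $\wt{\rho}\in(2-\kappa/2,2)$, hence neither is $\wt{f}_{t}(z)$; the scalar identity above is precisely the device that packages the ``bad'' part of $\log\wt{f}_{t}(z)$ into a genuine stochastic integral against $B$.

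For the first identity, I would begin by noting that $\wt{f}_{t}(0_{+})$ is real (it is $\sqrt{\kappa}$ times a $\BES^{\wt{\delta}}$), so by Schwarz reflection $\wt{G}_{t}(0_{+},z) = -2\log|\wt{f}_{t}(z)-\wt{f}_{t}(0_{+})| = -2\,\re\log(\wt{f}_{t}(z)-\wt{f}_{t}(0_{+}))$, whence
\begin{align*}
\wt{\Fh}_{t}(z) = \re\!\left[\frac{2}{\sqrt{\kappa}}\log\wt{f}_{t}(z) - \frac{\wt{\rho}}{\sqrt{\kappa}}\log(\wt{f}_{t}(z)-\wt{f}_{t}(0_{+}))\right] + Q\log|\wt{f}_{t}'(z)|.
\end{align*}
Substituting the scalar identity into the bracket, and differentiating $\partial_{t}\wt{g}_{t}(z) = -2/\wt{f}_{t}(z)$ in $z$ to obtain $\partial_{t}\log\wt{f}_{t}'(z) = 2/(\wt{f}_{t}(z))^{2}$ — a computation in which $\wt{W}$ drops out entirely, so $Q\log|\wt{f}_{t}'(z)| = Q\,\re\int_{0}^{t}2/(\wt{f}_{s}(z))^{2}\,ds$ — the contribution $-Q\,\re\int_{0}^{t}2/(\wt{f}_{s}(z))^{2}\,ds$ coming from the bracket cancels $Q\log|\wt{f}_{t}'(z)|$. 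After checking that the remaining non-random term equals $\wt{\Fh}_{0}(z)$, one is left with $\wt{\Fh}_{t}(z) = \wt{\Fh}_{0}(z) + \re\int_{0}^{t}(2/\wt{f}_{s}(z))\,dB_{s}$, which is the first display in differential form.

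For the second identity I would again argue directly while avoiding $\wt{W}$: since $\wt{f}_{t}(y)-\wt{f}_{t}(z) = \wt{g}_{t}(y)-\wt{g}_{t}(z)$, one gets $\partial_{t}(\wt{f}_{t}(y)-\wt{f}_{t}(z)) = -2/\wt{f}_{t}(y) + 2/\wt{f}_{t}(z)$ and hence $\partial_{t}\log(\wt{f}_{t}(y)-\wt{f}_{t}(z)) = 2/(\wt{f}_{t}(y)\wt{f}_{t}(z))$, and by Schwarz reflection likewise $\partial_{t}\log(\wt{f}_{t}(y)-\overline{\wt{f}_{t}(z)}) = 2/(\wt{f}_{t}(y)\overline{\wt{f}_{t}(z)})$. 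Writing $\wt{G}_{t}(y,z) = -\re\log(\wt{f}_{t}(y)-\wt{f}_{t}(z)) - \re\log(\wt{f}_{t}(y)-\overline{\wt{f}_{t}(z)})$, summing the two derivatives and using $\tfrac{1}{w}+\tfrac{1}{\bar{w}} = \re\tfrac{2}{w}$ gives $\partial_{t}\wt{G}_{t}(y,z) = -\re(2/\wt{f}_{t}(y))\,\re(2/\wt{f}_{t}(z))$.

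The one genuinely delicate point — and the only place where something beyond routine bookkeeping occurs — is the interplay with the non-semimartingale $\wt{W}$: one must verify that the only ingredient of $\wt{\Fh}_{t}(z)$ in which $\wt{W}$ appears is $\log\wt{f}_{t}(z)$, whose evolution is controlled solely through the pre-established scalar identity, while every other ingredient ($\log\wt{f}_{t}'(z)$ and the differences $\wt{f}_{t}(y)-\wt{f}_{t}(z)$, $\wt{f}_{t}(y)-\overline{\wt{f}_{t}(z)}$) satisfies an ODE in $t$ that never sees $\wt{W}$. Secondary care is needed in fixing branches of the complex logarithm along the flow and in justifying Schwarz reflection for $\overline{\wt{f}_{t}(z)}$; beyond that I expect no essential difficulty.
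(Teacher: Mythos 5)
Your proposal is correct and follows essentially the same route as the paper: the first identity is obtained exactly as in the text by taking real parts of the pre-established equality of~\eqref{eq:RHS_convergence} and~\eqref{eq:LHS_convergence}, adding $Q\log|\wt{f}_t'(z)|$ via the deterministic ODE $\partial_t\log\wt{f}_t'(z)=2/\wt{f}_t(z)^2$, and identifying the constant term with $\wt{\Fh}_0(z)$. For the second identity the paper simply invokes the argument of \cite[Lemma~5.2]{dms2014mating}, and your explicit computation with $\wt{f}_t(y)-\wt{f}_t(z)=\wt{g}_t(y)-\wt{g}_t(z)$ (so that $\wt{W}$ never enters) is precisely that argument written out.
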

\begin{proof}
By~\eqref{eqn:rev_loewner} we obtain that 
\begin{align*}
d\wt{f}_{t}'(z) = \frac{2\wt{f}_{t}'(z)}{\wt{f}_{t}^{2}(z)}dt
\end{align*}
and thus 
\begin{align*}
\log \wt{f}_{t}'(z) = \int_{0}^{t}\frac{2}{(\wt{f}_{t}(z))^{2}}ds.
\end{align*}
The first claim then follows by taking real parts in~\eqref{eq:RHS_convergence} and~\eqref{eq:LHS_convergence} and then adding $Q\log|\wt{f}_{t}'(z)|$ to both of them.
\newline
As for the second claim, we apply the same argument as in \cite[Lemma~5.2]{dms2014mating}.
\end{proof}
\begin{lemma}[$\text{\cite[Lemma~5.3]{dms2014mating}}$]\label{lem:square_integrable_martingale}
For each $ t \geq 0$, $\wt{\Fh}_{t} + h \circ\wt{f}_{t}$ a.s.\ takes values in the space of modulo additive constant distributions. Suppose that $\phi \in C_{0}^{\infty}(\h)$ with $\int_{\h}\phi(z)dz = 0$. Then both $(\wt{\Fh}_{t} + h \circ \wt{f}_{t},\phi)$ and $(\wt{\Fh}_{t},\phi)$ are a.s.\ continuous in $t$ and the latter is a square-integrable martingale.
\end{lemma}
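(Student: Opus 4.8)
The plan is to follow the strategy of \cite[Lemma~5.3]{dms2014mating}, using Lemma~\ref{lem:helping_lemma} as the replacement for the ordinary It\^o calculus that is available in the semimartingale setting.  The first point to record is that, since $\wt{f}_t = \wt{g}_t - \wt{W}_t$ with $\wt{W}_t \in \R$ and $\partial_t \im \wt{g}_t(z) = 2\im\wt{g}_t(z)/|\wt{g}_t(z) - \wt{W}_t|^2 \geq 0$, we have $\im\wt{f}_t(z) = \im\wt{g}_t(z) \geq \im z$ for all $t \geq 0$ and $z \in \h$.  In particular $\wt{f}_t(z)\neq 0$, the conformal map $\wt{f}_t$ has $\wt{f}_t'(z)\neq 0$, and for $z$ in a compact set $K \subseteq \h$ we get the uniform bound $|\re(2/\wt{f}_t(z))| \leq 2/\im z \leq 2/\dist(K,\partial\h)$, valid for all $t \geq 0$.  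From the first two facts, $z \mapsto \wt{\Fh}_t(z) = \wh{\Fh}_t(z) + \tfrac{\wt{\rho}}{2\sqrt{\kappa}}\wt{G}_t(0_+,z)$ is a (random) harmonic, hence smooth, function on $\h$, so in particular a distribution; and $h \circ \wt{f}_t$ is a distribution defined modulo additive constant, since by the conformal change of variables $(h\circ\wt{f}_t,\phi) = (h,\psi_t)$ with $\psi_t := (\phi\circ\wt{f}_t^{-1})\,|(\wt{f}_t^{-1})'|^2$ satisfying $\int_\h\psi_t = \int_\h\phi$, so the pairing is well defined whenever $\int_\h \phi = 0$ because $h$ is a free boundary GFF.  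Adding, $\wt{\Fh}_t + h\circ\wt{f}_t$ a.s.\ takes values in the space of modulo additive constant distributions.

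For the martingale assertion, fix $\phi \in C_0^\infty(\h)$ with $\int_\h\phi = 0$ and let $K := \supp\phi$.  By Lemma~\ref{lem:helping_lemma}, $\wt{\Fh}_t(z) - \wt{\Fh}_0(z) = \int_0^t \re(2/\wt{f}_s(z))\,dB_s$ for each $z$, where $B$ is the Brownian motion driving $\wt{W}$.  Using the uniform bound from the first paragraph to justify the stochastic Fubini theorem, we get
\begin{align*}
(\wt{\Fh}_t,\phi) - (\wt{\Fh}_0,\phi) = \int_0^t M_s\,dB_s, \qquad M_s := \int_\h \phi(z)\,\re\!\left(\frac{2}{\wt{f}_s(z)}\right)dz,
\end{align*}
and $|M_s| \leq 2\|\phi\|_{L^1}/\dist(K,\partial\h) =: C_\phi$ for all $s$, so $\E\int_0^t M_s^2\,ds \leq C_\phi^2\, t < \infty$.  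Hence $t \mapsto (\wt{\Fh}_t,\phi)$ (which starts at the deterministic value $(\wt{\Fh}_0,\phi)$) is a square-integrable martingale for the filtration generated by $B$, and it is a.s.\ continuous in $t$ because stochastic integrals have continuous versions.

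It remains to check that $t \mapsto (h\circ\wt{f}_t,\phi)$ is a.s.\ continuous, after which continuity of the sum follows.  Since $h$ is independent of $\wt{W}$, condition on the path $(\wt{f}_s)_{s\geq 0}$; the maps $\wt{f}_s$ then vary continuously in $s$, locally uniformly on $\h$ together with their derivatives, so $s \mapsto \psi_s$ is continuous into $C^k(\h)$ for every $k$ with supports contained in a fixed compact subset of $\h$ on any bounded time interval.  As the free boundary GFF $h$ a.s.\ acts continuously on such a family of test functions (viewing it as an element of a negative-order Sobolev space), $(h,\psi_s)$ is a.s.\ continuous in $s$; alternatively, conditionally on $(\wt{f}_s)$ the process $s\mapsto (h,\psi_s)$ is centered Gaussian with covariance $(s,s')\mapsto \iint_{\h\times\h}\phi(y)\phi(z)\,G_\h^N(\wt{f}_s(y),\wt{f}_{s'}(z))\,dy\,dz$, which is jointly continuous because $G_\h^N$ has only a logarithmic singularity on the diagonal and $\wt{f}_s$ is conformal.

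None of the steps is individually difficult; the only place requiring care is making sure that the substitute for It\^o's formula, Lemma~\ref{lem:helping_lemma}, is used only in ways that do not reintroduce a need for the semimartingale property of $\wt{W}$ or for finiteness of $\int_0^\cdot \wt{f}_s(0_+)^{-1}\,ds$.  In practice every bound above reduces to the uniform estimate $|\re(2/\wt{f}_s(z))| \leq 2/\im z$ coming from monotonicity of $\im\wt{f}_s(z)$ under the reverse flow, except for the continuity of $s\mapsto (h\circ\wt{f}_s,\phi)$, which is the one mild technical point and is dealt with by conditioning on $\wt{f}$ and using conformal covariance as indicated.
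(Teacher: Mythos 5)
Your argument is correct and is essentially the same as the paper's, which simply defers to the proof of \cite[Lemma~5.3]{dms2014mating}: you reconstruct that argument, with Lemma~\ref{lem:helping_lemma} supplying the pointwise SDE in place of \cite[Lemma~5.2]{dms2014mating} and the uniform bound $|\re(2/\wt{f}_s(z))|\leq 2/\im z$ justifying the Fubini and square-integrability steps. The treatment of the distributional and continuity claims via the conformal change of variables and the independence of $h$ from $\wt{W}$ likewise matches the cited argument.
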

\begin{proof}
It follows from the proof of \cite[Lemma~5.3]{dms2014mating}.
\end{proof}
For each $\phi \in C_{0}^{\infty}(\h)$ with $\int_{\h}\phi (z)dz = 0$ and $t \geq 0$ we let 
\begin{align*}
\wt{E}_{t}(\phi) = \int_{\h}\int_{\h} \phi(y)\wt{G}_{t}(y,z)\phi(z)dydz
\end{align*}
be the conditional variance of $(h \circ\wt{f}_{t},\phi)$ given $\wt{f}_{t}$.

\begin{lemma}[$\text{\cite[Lemma~5.4]{dms2014mating}}$]
\label{lem:variation_formula}
For each $\phi \in C_{0}^{\infty}(\h)$ with $\int_{\h} \phi (z)dz = 0$ we have that 
\begin{align*}
d\langle (\wt{\Fh}_{\cdot},\phi)\rangle_t  = -d\wt{E}_{t}(\phi)
\end{align*}
\end{lemma}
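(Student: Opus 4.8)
The plan is to obtain the identity by a direct computation from the two formulas in Lemma~\ref{lem:helping_lemma}, following \cite[Lemma~5.4]{dms2014mating}. First I would write $(\wt{\Fh}_{t},\phi) = \int_{\h}\wt{\Fh}_{t}(z)\phi(z)\,dz$ and invoke the first part of Lemma~\ref{lem:helping_lemma}, which gives $d\wt{\Fh}_{t}(z) = \re(2/\wt{f}_{t}(z))\,dB_{t}$. A stochastic Fubini argument, together with Lemma~\ref{lem:square_integrable_martingale} (which already records that $(\wt{\Fh}_{\cdot},\phi)$ is a square-integrable martingale), then yields
\begin{align*}
d(\wt{\Fh}_{t},\phi) = \left( \int_{\h}\re\!\left(\frac{2}{\wt{f}_{t}(z)}\right)\phi(z)\,dz \right) dB_{t},
\end{align*}
so that, squaring the integrand,
\begin{align*}
d\langle (\wt{\Fh}_{\cdot},\phi)\rangle_{t} = \int_{\h}\int_{\h}\phi(y)\phi(z)\,\re\!\left(\frac{2}{\wt{f}_{t}(y)}\right)\re\!\left(\frac{2}{\wt{f}_{t}(z)}\right) dy\,dz\,dt.
\end{align*}
Note that restricting to mean-zero $\phi$ is what makes the additive-constant ambiguity in $\wt{\Fh}_{t}$ irrelevant here.

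Second I would differentiate $\wt{E}_{t}(\phi) = \int_{\h}\int_{\h}\phi(y)\wt{G}_{t}(y,z)\phi(z)\,dy\,dz$ in $t$ and substitute the second part of Lemma~\ref{lem:helping_lemma}, namely $d\wt{G}_{t}(y,z) = -\re(2/\wt{f}_{t}(y))\re(2/\wt{f}_{t}(z))\,dt$, to obtain
\begin{align*}
d\wt{E}_{t}(\phi) &= -\int_{\h}\int_{\h}\phi(y)\phi(z)\,\re\!\left(\frac{2}{\wt{f}_{t}(y)}\right)\re\!\left(\frac{2}{\wt{f}_{t}(z)}\right) dy\,dz\,dt\\
&= -d\langle (\wt{\Fh}_{\cdot},\phi)\rangle_{t},
\end{align*}
which is the assertion.

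The two interchanges of integration are where a small amount of care is needed. For the reverse Loewner flow one has $\im \wt{f}_{t}(z) = \im \wt{g}_{t}(z) \geq \im z$, so on the compact set $\supp\phi \subseteq \h$ the modulus $|\wt{f}_{t}(z)|$ is bounded below by the deterministic constant $\dist(\supp\phi,\partial\h) > 0$, uniformly over $t$ in a compact interval; hence $\re(2/\wt{f}_{t}(z))$ is bounded on $\supp\phi$ and the stochastic Fubini theorem applies. For the second interchange, the logarithmic singularity of $\wt{G}_{t}(y,z)$ on the diagonal $y=z$ comes entirely from the $-\log|y-z|$ term, which is time-independent, so $d\wt{G}_{t}(y,z)$ is jointly continuous across the diagonal and its density is uniformly bounded on $\supp\phi \times \supp\phi$ over compact time intervals, justifying differentiation under the integral sign. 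I expect this bookkeeping to be the only obstacle: the genuinely new difficulty of the $\wt{\rho} < 2$ regime, namely the jumps coming from the $\eps$-$\BES^{\wt{\delta}}$ approximation of Proposition~\ref{prop:epsilon_bessel}, has already been absorbed into Lemma~\ref{lem:helping_lemma} via the identification of~\eqref{eq:RHS_convergence} with~\eqref{eq:LHS_convergence}, so no new phenomenon of that type arises here and the lemma reduces to the clean computation of \cite[Lemma~5.4]{dms2014mating}.
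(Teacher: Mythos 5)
Your argument is exactly the computation that the paper invokes: it cites the proof of \cite[Lemma~5.4]{dms2014mating}, which consists of precisely this two-step calculation (stochastic Fubini plus Lemma~\ref{lem:helping_lemma} to get the quadratic variation of $(\wt{\Fh}_\cdot,\phi)$, and differentiation under the integral using $d\wt{G}_t$ for $\wt{E}_t(\phi)$), with the $\wt{\rho}<2$ difficulties already absorbed into Lemma~\ref{lem:helping_lemma} as you note. So the proposal is correct and follows essentially the same route as the paper.
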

\begin{proof}
It follows from the proof of \cite[Lemma~5.4]{dms2014mating}.
\end{proof}
We are now ready to prove Theorem~\ref{thm:reverse_coupling}.
\begin{proof}
It follows by combining Lemmas~\ref{lem:helping_lemma},   \ref{lem:square_integrable_martingale}, and~\ref{lem:variation_formula} as in the proof of \cite[Theorem~5.1]{dms2014mating}.
\end{proof}

\subsection{Law of an $\SLE_{\kappa}(\rho)$ excursion}\label{subsec:law_of_sle_excursion}

In this section, we will give the conditional law of the excursion of an $\SLE_{\kappa}(\rho)$ process in $\h$ with $\kappa \in (0,4)$ and $\rho \in (\kappa/2-4,-2) \cap (-2 -\kappa/2,-2)$ with the force point at $0_+$ which separates a given boundary point $x > 0$ from $\infty$ given the realization of the path before the start of the excursion.

\begin{theorem}
\label{thm:law_of_sle_excursion}
Fix $\kappa \in (0,4)$, $\rho \in (\kappa/2 - 4, -2)$, $\rho > -\kappa/2 - 2$.  Let $\eta$ be an $\SLE_{\kappa}(\rho)$ process in $\h$ from $0$ to $\infty$ with the force point located at $0_+$.  Fix $x > 0$ and let $T_x$ be the first time that~$\eta$ separates~$x$ from $\infty$.  Let $S_x = \sup\{ t < T_x : W_t = V_t\}$ be the most recent time before $T_x$ that~$\eta$ hits its force point. Then the conditional law of $\eta|_{[S_x,T_{x}]}$ given $\eta|_{[0,S_x]}$ and $\eta(T_x)$ is that of an $\SLE_{\kappa}(\rho+2; \kappa-4-\rho)$ process from $\eta(S_x)$ to $\eta(T_x)$ in $\h_{S_x}$ with the weight $\rho+2$ force point located at $\infty$ and the weight $\kappa-4-\rho$ force point located at $(\eta(S_x))^+$.
\end{theorem}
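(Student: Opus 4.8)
The plan is to reduce the statement, via the domain Markov property of the $\SLE_\kappa(\rho)$ process and conformal invariance, to a question about a single excursion of its driving Bessel process, and then to identify the conditioned law of that excursion by a Doob $h$-transform computation together with target invariance of the $\SLE_\kappa(\ul\rho)$ processes.

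First I would condition on $\eta|_{[0,S_x]}$, let $\h_{S_x}$ be the unbounded connected component of $\h\setminus\eta([0,S_x])$, and apply the centered Loewner map $f_{S_x}$, sending $\eta(S_x)$ to $0$ and $x$ to some $\hat x>0$. Recall that $\eta$ is driven by $W=V-\sqrt{\kappa}X$ with $X$ a $\BES^{\delta}$, $\delta=1+2(\rho+2)/\kappa\in(0,1)$ (here we use $\rho<-2$ and $\rho>-2-\kappa/2$), that $V_t=\frac{2}{\sqrt\kappa}\pv\int_0^t X_s^{-1}\,ds$, and that $W_t=V_t$ exactly when $X_t=0$. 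Since $W_{S_x}=V_{S_x}$, the point $T_x$ lies in a single excursion interval of $X$ away from $0$ with left endpoint $S_x$, the point $\hat x$ is swallowed at time $T_x-S_x$ inside it, and by the definition of $S_x$ this is the first excursion of $X$ (in the local time ordering) during which $\hat x$ is swallowed. Using It\^o's excursion theory --- which is precisely what lets us deal with the fact that $S_x$ is a last-exit time and not a stopping time --- together with the Markov property of $X$, I would show that, conditionally on $\eta|_{[0,S_x]}$, the curve $f_{S_x}(\eta|_{[S_x,T_x]})$ is distributed as the $\SLE_\kappa(\rho)$ Loewner dynamics run on a single Bessel excursion, conditioned on the event $E$ that $\hat x$ is swallowed before the excursion closes, stopped at the swallowing time, and with endpoint distributed according to the conditional law of $f_{S_x}(\eta(T_x))$ given $E$.

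To identify this conditioned law I would, during the excursion (where $X>0$, so the $(W,V)$-dynamics are the ordinary $\SLE_\kappa(\rho)$ SDEs), track the gaps $b_t=V_t-W_t=\sqrt{\kappa}X_t$ (which vanishes when the excursion closes) and $c_t=g_t(\hat x)-V_t$ (which vanishes exactly when $\hat x$ is swallowed); one has $db_t=\frac{\rho+2}{b_t}\,dt+\sqrt\kappa\,dB_t$ and $dc_t=-\frac{2c_t}{b_t(b_t+c_t)}\,dt$, so $c$ has no martingale part and $(b,c)$ is invariant under Brownian scaling. Hence $E=\{c\text{ hits }0\text{ before }b\}$ corresponds to a Doob $h$-transform by a harmonic function $\Psi$ of the single ratio $v_t=c_t/b_t$, and solving the resulting ODE gives $\Psi'(v)\propto v^{p}(1+v)^{-q}$ with $p=(2\rho+8-2\kappa)/\kappa$ and $q=4/\kappa$; the normalization $\int_0^\infty v^{p}(1+v)^{-q}\,dv<\infty$ holds if and only if $p>-1$ (the other condition $p-q<-1$ being automatic since $\delta<1$), i.e.\ if and only if $\rho>\kappa/2-4$ --- which is exactly where the light cone hypothesis enters. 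Carrying out the Girsanov change of measure given by $\Psi(v_t)$ and then using target invariance of the $\SLE_\kappa(\ul\rho)$ processes to change coordinates so that $\eta(T_x)$ is sent to $\infty$, I expect the tilted driving process to be that of an $\SLE_\kappa(\rho+2;\kappa-4-\rho)$ with the weight $\kappa-4-\rho$ force point at $(\eta(S_x))^+$ (this is the dual weight, $\kappa-4-\rho\leftrightarrow 4-\delta$, produced by the excursion conditioning) and the weight $\rho+2$ force point at the image of $\infty$; note that $\rho+2$ and $\kappa-4-\rho$ both exceed $-2$, so this is an ordinary boundary-intersecting $\SLE_\kappa(\ul\rho)$. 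Conditioning finally on $\eta(T_x)$ just fixes the target and removes the remaining mixture over the endpoint. As a parallel check I would redo the identification of the weights via the forward $\SLE$/GFF coupling of Theorem~\ref{thm:ligthcone_and_sle}: since $\eta$ is a.s.\ determined by the GFF $h$ with boundary data $-\lambda$ on $\R_-$ and $\lambda(1+\rho)$ on $\R_+$, conditioning on $\eta|_{[0,S_x]}$, on the separation of $x$ from $\infty$, and on $\eta(T_x)$ is a conditioning of $h$, and the change-of-coordinates rule for the boundary data along $\partial\h_{S_x}$ then exhibits the conditional law of the field as the one associated with an $\SLE_\kappa(\rho+2;\kappa-4-\rho)$ with the stated force points.

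The main obstacle, I expect, is the reduction in the second paragraph: making rigorous the excursion-theoretic (Palm-type) conditioning at the last-exit time $S_x$ and the restart of the $\SLE_\kappa(\rho)$ dynamics from the configuration at $S_x$ (exploiting that the process is driven by a Markovian Bessel process), together with the book-keeping --- after the several conformal coordinate changes --- needed to pin down the \emph{locations} of the two force points and not merely their weights.
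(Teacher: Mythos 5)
Your route is genuinely different from the paper's: the paper proves this theorem by viewing $\eta$ as the light cone path coupled with a GFF on the strip, introducing the counterflow lines $\eta_{\mathrm L}'$, $\eta_{\mathrm R}'$, and characterizing the conditional law through the resampling property of \cite[Propositions~5.10, 5.12]{ms2016imag2}, whereas you work directly with the driving Bessel process, excursion theory at the last-exit time $S_x$, and a Girsanov computation. Much of your computation is sound: with $b_t=V_t-W_t$, $c_t=g_t(x)-V_t$, $v_t=c_t/b_t$, the scale function indeed satisfies $\Psi'(v)\propto v^{p}(1+v)^{-q}$ with $p=(2\rho+8-2\kappa)/\kappa$, $q=4/\kappa$, and integrability at $0$ is exactly $\rho>\kappa/2-4$; also the event that the excursion straddling $T_x$ separates $x$ is (a.s.) the event that the excursion's endpoint lies beyond the image of $x$, so conditioning on $E$ and then on $\eta(T_x)$ is legitimate in principle.

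The genuine gap is the final identification. Tilting by $\Psi(v_t)$ conditions only on $E$, and the resulting law is a \emph{mixture} over the endpoint of the excursion; its driving drift involves $\Psi'(v)/\Psi(v)$, which is not of the two-force-point form, so it is not an $\SLE_\kappa(\rho+2;\kappa-4-\rho)$. Your appeal to target invariance cannot repair this: the weights sum to $(\rho+2)+(\kappa-4-\rho)=\kappa-2\neq\kappa-6$, and in any case the endpoint $\eta(T_x)$ is not known at running time, so no coordinate change sending it to $\infty$ can be applied before disintegrating over it; ``conditioning on $\eta(T_x)$ just fixes the target'' is precisely the step that needs proof. The fix within your framework is to tilt instead by the endpoint-density martingale $\partial_w\Psi(v_t(w))\propto g_t'(w)\,(g_t(w)-V_t)^{p}(g_t(w)-W_t)^{-4/\kappa}(V_t-W_t)^{(\kappa-2\rho-4)/\kappa}$: the last factor is $(V_t-W_t)^{2-\delta}$, the Bessel duality tilt changing the weight at $0_+$ from $\rho$ to $\kappa-4-\rho$, and the remaining factors are exactly the insertion of a force point of weight $-4$ at $w$; re-targeting the resulting $\SLE_\kappa(\kappa-4-\rho;-4)$ at $w$ then places weight $\kappa-6-(\kappa-4-\rho)-(-4)=\rho+2$ at $\infty$, recovering the statement. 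Even then, you still owe the last-exit/excursion decomposition that justifies restarting the dynamics at $S_x$ and reading the conditional law off the It\^o excursion measure (you flag this), and your ``parallel check'' via the forward GFF coupling is not a bookkeeping exercise --- making it rigorous is essentially the counterflow-line/resampling argument the paper actually carries out.
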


In order to give the proof of Theorem~\ref{thm:law_of_sle_excursion}, we first need to collect the following lemma.

\begin{lemma}
\label{lem:sle_separates_points}
Fix $\kappa \in (0,4)$, $\rho \in (\kappa/2 - 4, -2)$, $\rho > -\kappa/2 - 2$.  Let $\eta$ be an $\SLE_{\kappa}(\rho)$ process in~$\h$ from~$0$ to~$\infty$ with the force point located at $0_+$.  Then for each fixed $x>0$, we have that $\eta$ does not hit $x$ and separates $x$ from $\infty$ a.s.
\end{lemma}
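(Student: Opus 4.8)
The plan is to treat the two assertions separately. For the separation statement, I would first note that $\eta$ separates $x$ from $\infty$ the moment it touches $\R_+$ at some point $p>x$: at that time the hull contains the interval $[0,p]$, hence $x$. (One checks that $K_t\cap\R_+$ is always an interval $[0,m_t]$ with $m_t$ nondecreasing in $t$ --- since the curve starts at $0$, any bubble it cuts off from $\R_+$ has $\R_+$-trace of the form $[0,\cdot]$ --- so it is enough to produce a single touching point to the right of $x$.) It therefore suffices to show $\sup(\eta\cap\R_+)=+\infty$ a.s. This follows from scale invariance: an $\SLE_\kappa(\rho)$ from $0$ to $\infty$ in $\h$ with force point at $0_+$ is invariant in law under $z\mapsto cz$, a map which fixes $0$ and $\infty$ and preserves $\R_+$, so $\sup(\eta\cap\R_+)\stackrel{d}{=}c\,\sup(\eta\cap\R_+)$ for every $c>0$. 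A $[0,\infty]$-valued random variable with this property must a.s.\ lie in $\{0,\infty\}$ (otherwise its logarithm, conditioned to be finite, would have a translation-invariant probability law on $\R$, which is impossible). Since $\{\sup(\eta\cap\R_+)=0\}=\{\eta\cap\R_+\subseteq\{0\}\}$ has probability zero --- because in the light cone regime $\eta$ a.s.\ touches $\R_+\setminus\{0\}$, which is part of the basic description of these processes, cf.\ \cite{ms2019lightcone} --- we conclude $\sup(\eta\cap\R_+)=+\infty$ a.s., simultaneously for all $x>0$.

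For the non-hitting statement, scale invariance again shows that $\p[x\in\eta]$ is independent of $x>0$; call it $p$. By Tonelli, $\E[\mathrm{Leb}(\eta\cap(0,M))]=\int_0^M\p[x\in\eta]\,dx=pM$ for all $M>0$, so if $p>0$ then $\eta\cap\R_+$ has positive Lebesgue measure with positive probability. Hence it suffices to prove $\mathrm{Leb}(\eta\cap\R_+)=0$ a.s. I would deduce this from the structure of $\eta\cap\R_+$ in the light cone regime: the complement of $\eta$ in $\R_+$ contains the (relatively open) $\R_+$-traces of the bubbles cut off by $\eta$, and these traces cover Lebesgue-a.e.\ point of $\R_+$; equivalently, the set of capacity times at which $\eta$ is on $\partial\h$ is contained in the zero set of a $\BES^\delta$ process with $\delta=1+2(\rho+2)/\kappa\in(0,1)$, which is Lebesgue-null, and one then controls the image of this time set under $\eta$. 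Alternatively one may cite that $\eta\cap\R$ has Hausdorff dimension strictly below $1$, which is known for these processes. Combining the two parts gives the lemma.

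The delicate point is the input $\mathrm{Leb}(\eta\cap\R_+)=0$ for the non-hitting assertion: unlike ordinary $\SLE_\kappa$ with $\kappa\in(0,4)$, the process here genuinely touches $\R_+$, so boundary avoidance is unavailable, and one must instead exploit the fractal structure of $\eta\cap\R_+$ --- via the bubble decomposition, the Bessel/local-time description, or the light-cone decomposition of Theorem~\ref{thm:ligthcone_and_sle}. A related subtlety, which would complicate any attempt to argue instead by the domain Markov property for $\eta$ near $x$, is that in this regime the force point leaves $\R$, so the relevant bookkeeping is heavier than in the $\rho>-2$ case.
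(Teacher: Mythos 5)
Your separation argument is structurally fine and the scale--invariance dichotomy for $\sup(\eta\cap\R_+)$ is a slick substitute for the renewal argument the paper uses (the paper proves in two steps that $\eta$ hits $(1,\infty)$ with positive probability --- via continuity of $\eta$, the Bessel driving process, and \cite[Lemma~2.5]{mw2017slepaths} --- and then upgrades to almost surely with a conformal Markov iteration). But note that your dichotomy still needs the input $\p[\eta\cap(\R_+\setminus\{0\})\neq\emptyset]=1$, not merely positive probability, and you obtain this purely by asserting it is ``part of the basic description'' in \cite{ms2019lightcone}. That fact is doing real work here --- it is exactly what Steps~1 and~2 of the paper's proof establish --- so at minimum you should supply a precise reference or an argument (e.g.\ the positive-probability hitting estimate plus a zero-one/Markov upgrade as in the paper).

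The genuine gap is in the non-hitting half. Your reduction via scaling and Tonelli to showing $\mathrm{Leb}(\eta\cap\R_+)=0$ a.s.\ is correct but merely reformulates the problem, and neither of your two proposed proofs of that statement holds up. The claim that the relatively open $\R_+$-traces of the cut-off bubbles cover Lebesgue-a.e.\ point of $\R_+$ is equivalent to the desired conclusion and is not justified; the time-set argument only says that the set of capacity times at which $\eta$ touches $\partial\h\cup\eta$ is contained in the zero set of a $\BES^\delta$ process, and a Lebesgue-null set of times can perfectly well map under a continuous (non-Lipschitz) curve to a set of positive length --- ``one then controls the image'' is precisely the missing step. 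The alternative of citing that $\dim_{\mathcal H}(\eta\cap\R)<1$ is not available: for this range of $\rho$ that dimension is Corollary~\ref{cor:dim_of_boundary_intersection} of the present paper, which lies downstream of this very lemma (via Theorem~\ref{thm:law_of_sle_excursion} and Theorem~\ref{thm:wedge_cutting}), so invoking it would be circular. The paper avoids all of this with a self-contained renewal argument (Step~3): assuming $p=\p[\eta\ \text{hits}\ 1]>0$, it conditions on $\eta$ entering $B(1,\delta)$, uses the conformal Markov property together with \cite[Lemma~2.5]{mw2017slepaths} to show that with conditional probability bounded below (uniformly in $\delta$) the curve returns to $\R_+$ away from the image of $1$, and derives the inequality $p\leq q(1-p_2(1-p))$ with $q$ arbitrarily close to $p$, forcing $p=0$. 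You would need either this kind of argument or an honest proof that $\eta\cap\R_+$ is Lebesgue-null to close your version.
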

\begin{proof} 
\noindent{\it Overview.} The proof will be carried out in three steps.  First of all, by the scale invariance of the law of $\eta$,  it suffices to prove the claim of the lemma when $x=1$.  In Step $1$,  we show that $\eta$ hits $(1,\infty)$ with positive probability.  In Step 2, we use the conformal Markov property of $\SLE_\kappa(\rho)$ in order to show that $\eta$ in fact hits $(1,\infty)$ a.s.  Finally, in Step 3 we prove that $\eta$ a.s.\ does not hit $1$.

\noindent{\it Step 1.  $\eta$ hits $(1,\infty)$ with positive probability.} Let $(V,W)$  be the driving pair encoding $\eta$ as in Section~\ref{subsec:slekapparho} and let $\tau$ be the first time that $\eta$ hits $(1,\infty)$.  Let also $(g_t)$ be the family of conformal maps solving the Loewner equation used to define $\eta$.  Since $\eta$ is continuous,  there exists $\epsilon_1>0$ so that $\p[\eta([0,\epsilon_1]) \subseteq B(0,\frac{1}{2})] > 0$.  Moreover,  since $\kappa^{-1/2} (V-W)$ is a $\BES^{\delta}$ process starting from $0$ with $\delta$ as in~\eqref{eqn:delta_and_rho},  we obtain that $V_{\epsilon_1} \neq W_{\epsilon_1}$ a.s.\  and so there exist $p,\epsilon_2 \in (0,1)$ such that $\p[A] \geq p$,  where $A$ is the event that $\eta([0,\epsilon_1]) \subseteq B(0,\frac{1}{2})$ and $\epsilon_2 \leq V_{\epsilon_1} - W_{\epsilon_1} \leq \epsilon_2^{-1}$.  The Markov property of $\eta$ implies that if $\mathcal{F}_t = \sigma(\eta|_{[0,t]})$,  then conditioned on $\mathcal{F}_{\epsilon_1}$ and restricted to $A$,  the curve $\wt{\eta}(t) = g_{\epsilon_1}(\eta(\epsilon_1 + t)) - W_{\epsilon_1}$ has the law of an $\SLE_{\kappa}(\rho)$ process in $\h$ from $0$ to $\infty$ with the force point located at $V_{\epsilon_1} - W_{\epsilon_1}$.  Let $\gamma$ be the concatenation of the segments $[0,i]$,  $[i,i+\epsilon_2^{-1}]$ and $[i+\epsilon_2^{-1},\epsilon_2^{-1}]$.  By \cite[Lemma~2.5]{mw2017slepaths},  we obtain that there exists a constant $q \in (0,1)$ such that conditionally on $\mathcal{F}_{\epsilon_1}$ and on the event $A$,  we have that with probability at least $q$,  the curve $\wt{\eta}$ hits $(\epsilon_2^{-1}-\epsilon_2,\epsilon_2^{-1}+\epsilon_2)$ before exiting the $\epsilon_2 / 2$-neighborhood of $\gamma$.  The claim then follows possibly by taking $\epsilon_1$ to be smaller since standard estimates for conformal maps imply that $|g_{\epsilon_1}(z)-z|\leq 6 \epsilon_1$ for all $z \in \h_{\epsilon_1}$,  where~$\h_{\epsilon_1}$ is the unbounded connected component of~$\h \setminus \eta([0,\epsilon_1])$.

\noindent{\it Step 2.  $\eta$ hits $(1,\infty)$ a.s.} By Step 1, we have that $\eta$ hits $(1,\infty)$ with positive probability.  Let $p$ be the probability that $\eta$ hits $(1,\infty)$ and assume that $p \in (0,1)$.  Fix $\epsilon \in (0,p)$.  Then there exists $T > 0$ so that the probability that $\eta|_{[0,T]}$ hits $(1,\infty)$ is at least $p-\epsilon$.  Let $\tau = \inf\{t \geq T : \eta(t) \in \R_+\}$.  On the event that $\eta|_{[0,\tau]}$ has not hit $(1,\infty)$, we let $\wt{\eta} = (g_\tau(\eta) - W_\tau)/(g_\tau(1) - W_\tau)$.  Then we have that~$\wt{\eta}$ hits $(1,\infty)$ with probability $p$.  Overall, this implies that the probability that $\eta$ hits $(1,\infty)$ is at least $p - \epsilon + (1-p+\epsilon)p$.  By taking $\epsilon > 0$ sufficiently small we see that $p - \epsilon + (1-p+\epsilon)p > p$, which is a contradiction.  Therefore $p=1$.

\noindent{\it Step 3.  $\eta$ does not hit $1$ a.s.}  Let $p$ be the probability that $\eta$ hits $1$.  By the argument given in Step~1, we have that $p < 1$.  Assume that $p > 0$ and fix $q \in (0,1)$ so that $p < q < 1$.  Fix $\delta > 0$ sufficiently small so that the probability that $\eta$ hits $B(1,\delta)$ is at most $q$.  Let $\tau = \inf\{t \geq 0 : \eta(t) \in B(1,\delta)\}$.  On the event that $\tau < \infty$, let $\phi_\tau = (g_\tau - W_\tau)/(V_\tau - W_\tau)$ so that $\wt{\eta} = \phi_\tau(\eta)$ is an $\SLE_\kappa(\rho)$ process in $\h$ from $0$ to $\infty$ with its force point located at $1$.  Let $\wt{\tau} = \inf\{t \geq 0 : \wt{\eta}(t) \in (1,\infty)\}$.  Let $p_1(\delta)$ be the probability that $\wt{\eta}(\wt{\tau}) \neq \phi_{\tau}(1)$ conditionally on the event that $\tau < \infty$.  Then,  \cite[Lemma~2.5]{mw2017slepaths} implies that there exists $p_2 > 0$ independent of $\delta$ so that $p_1(\delta) \geq p_2$.  Let $p_{1,L}(\delta)$ be the probability that $\wt{\eta}(\wt{\tau}) < \phi_\tau(1)$ and $p_{1,R}(\delta)$ be the probability that $\wt{\eta}(\wt{\tau}) > \phi_\tau(1)$,  conditionally on the event that $\tau<\infty$,  so that $p_1(\delta) = p_{1,L}(\delta) + p_{1,R}(\delta)$.  On the event that $\wt{\eta}(\wt{\tau}) < \phi_\tau(1)$, by the conformal Markov property of $\SLE_\kappa(\rho)$ the probability that it subsequently hits $\phi_\tau(1)$ is $p$.  Altogether, we have that
\[ p \leq q ( (1-p_1(\delta)) + p_{1,L}(\delta) p) \leq q( (1-p_1(\delta)) + p_1(\delta) p) \leq q(1-p_2(1-p)).\]
Since $ 1-p_2(1-p)< 1$, this is a contradiction unless $p = 0$ since by taking $\delta > 0$ small we can assume that $p < q < 1$ is arbitrarily close to $p$.
\end{proof}

\begin{figure}[ht!]
\begin{center}
\includegraphics[scale=0.85]{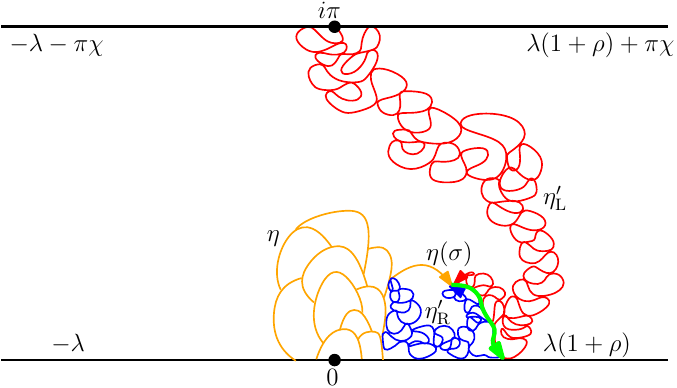}
\end{center}
\caption{\label{fig:sle_excursion_strip} Illustration of the setup to prove Theorem~\ref{thm:law_of_sle_excursion}.  Shown in orange is the $\SLE_\kappa(\rho)$ process $\eta$, which we view as the light cone path from $0$ to $i\pi$ coupled with the GFF on $\strip$ with the indicated boundary data.  The counterflow lines $\eta_{\mathrm{L}}'$ and $\eta_{\mathrm{R}}'$ are shown in red and blue, respectively, and their common outer boundary gives the completion of the excursion that $\eta$ is drawing at the time $\sigma$ (shown in green).}
\end{figure}

\begin{proof}[Proof of Theorem~\ref{thm:law_of_sle_excursion}]
\noindent{\it Step 1.  Setup and outline of the proof.} First we note that Lemma~\ref{lem:sle_separates_points} implies that $T_x < \infty$ a.s.  We are going to apply arguments which are similar to those presented in the proofs of \cite[Theorem~5.6]{dms2014mating} and \cite[Proposition~5.12]{ms2016imag2}.  We will in particular make use of the coupling of $\eta$ with the $\text{GFF}$ light cone introduced in \cite{ms2019lightcone} (see Section~\ref{subsec:light_cones} for a review of this).  First, we apply a conformal transformation $\h \to \strip$ taking $0$ to $0$ and $\infty$ to $i \pi$.  Then we can view $\eta$ as the light cone exploration path from $0$ to $i \pi$ associated with a $\text{GFF}$ $h$ on $\strip$ and $h$ has boundary values given by $-\lambda$ on $\R_-$, $\lambda (1+\rho)$ on $\R_+$, $-\lambda - \pi \chi$ on $\R_- + i \pi$, and $\lambda (1+\rho) + \pi \chi$ on $\R_+ + i\pi$.  (The reason for making this change of coordinates is that it will be easier to read off the law of different objects associated with the GFF on $\strip$ versus $\h$.)  Let $\eta_{\text{L}}'$ be the counterflow line of $h+ \pi \chi/2$ starting from $i \pi$ and targeted at $0$.  As usual, we let $\kappa'=16/\kappa$.  Then we note that $\eta_{\text{L}}'$ has the law of an $\SLE_{\kappa}(\rho_{\text{L}}' ; \rho_{\text{R}}')$ process where $\rho_{\text{L}}' = \kappa'(1+\rho/4)-4$ and $\rho_{\text{R}}' = \kappa'/2-2$ where the force points are located at $(i\pi)_-$ and $(i\pi)_+$.  Indeed, the reason for this is that $\rho_{{\mathrm L}}'$, $\rho_{{\mathrm R}}'$ are determined by the equations
\[ \lambda'(1+\rho_{{\mathrm L}}') = \lambda(1+\rho)+ \frac{3\pi\chi}{2} \quad\text{and}\quad  -\lambda'(1+\rho_{{\mathrm R}}') = -\lambda -\frac{\pi \chi}{2}.\]
Hence,  a.s.\  $\eta_{\text{L}}'$ does not intersect either $\R_-$ or $\R_- + i \pi$ since $\kappa'/2-2$ is the critical value below which such processes can hit the boundary.  The proof is then divided into three subsequent steps.  In Step~2,  we will describe the interaction between $\eta_{\text{L}}'$ and the pockets formed by $\eta$.  In Step~3,  we are going to prove that the joint law of $\eta|_{[S_x,T_x]}$ together with certain flow lines satisfies the same resampling property as stated in \cite[Proposition~5.12]{ms2016imag2}.  Finally,  in Step~4,  we complete the proof by observing that if $\eta|_{[S_x,T_x]}$ had the law of an $\SLE_{\kappa}(\rho+2 ; \kappa-4-\rho)$ process,  then it would satisfy the same resampling property.  \cite[Proposition~5.10]{ms2016imag2} implies that the conditional law of $\eta|_{[S_x,T_x]}$ is uniquely determined by this resampling property and so this completes the proof of the lemma.

\noindent{\it Step 2.  Interaction between $\eta_{\text{L}}'$ and the pockets of $\eta$.} We now describe the interaction between $\eta_{\text{L}}'$ and the pockets formed by $\eta$ as described in \cite{ms2019lightcone}.  Let $P$ be a pocket of formed by $\eta$ with $u$ (resp.\ $v$) its opening (resp.\ closing) point.  Let $\eta_1(P)$ be the $0$-angle flow line part of $\partial P$ and $\eta_2(P)$ be the $\theta_{\rho}$-angle flow line part of $\partial P$ with $\theta_{\rho}$ as in~\eqref{eqn:theta_and_rho}.  For $z \in \strip$,  we let $\eta_z'$ be the counterflow line of $h+\pi \chi/2$ starting from $i \pi$ and targeted at $z$.  Then $\eta_{\text{L}}'$ and $\eta_z'$ agree up until they separate $z$ from $0$ a.s., and the right side of $\eta_z'$ stopped upon hitting $z$ is equal to the flow line of $h$ starting from $z$ with angle $0$.  It in particular follows that $\eta_{\text{L}}'$ hits the pocket of $\eta$ containing $z$ provided the flow line of $h$ starting from $z$ exits $\strip$ in $\R_+$ or $\R_+ + i \pi$.  Assume that we are on this event.  Combining with the flow line interaction rules \cite[Theorem~1.7]{ms2017imag4} with \cite[Section~3]{ms2019lightcone},  we obtain that $\eta_{\text{L}}'$ interacts with $P$ in the following way.
\begin{enumerate}[(i)]
\item $\eta_{\text{L}}'$ enters the interior of $P$ at $u$ after filling the left side of $\eta_1(P)$ in reverse chronological order,  where we view $\eta_1(P)$ as an oriented path from $u$ to $v$.
\item Upon intersecting $P$,  $\eta_{\text{L}}'$ visits some points on the right side of $\eta_1(P)$ as it travels from $u$ to $v$.  It does not touch $\eta_2(P)$ up until hitting $v$.  
\item Upon hitting $v$,  it visits all the points of $\eta_2(P)$ in reverse chronological order and, while doing so,  $\eta_{\text{L}}'$ makes excursions both into and out of $P$.
\end{enumerate}
The above interaction rules imply that $\eta_{\text{L}}'$ contains the pockets $P$ of $\eta$ such that the the flow line corresponding to $\eta_1(P)$ exits $\strip$ in $\R_+$ or $\R_+ + i \pi$ a.s.  In particular, if $\eta$ makes an excursion away from where it hits its force point and the excursion terminates in $\R_+$ or $\R_+ + i \pi$, then $\eta_{\text{L}}'$ has to hit it at its tip and then fill the path in reverse chronological order a.s.  Hence, the excursion that $\eta$ completes at time $T_x$ is contained in $\eta_{\text{L}}'$ a.s.\ and corresponds to an excursion that $\eta_{\text{L}}'$ makes away from $\R_+ \cup (\R_+ + i \pi)$.

\noindent{\it Step 3.  Resampling argument.} Suppose that $\sigma$ is a stopping time for $\eta$.  We assume that we are working on the event that $\eta$ does not hit its force point at time $\sigma$ and that $S_x \leq \sigma \leq T_x$.  We then let~$\eta_{\text{R}}'$ be the counterflow line of the restriction of $h-\frac{\pi \chi}{2}$ to the connected component $D$ of $\strip \setminus \eta([0,\sigma])$ whose boundary contains $i\pi$, which starts from the point corresponding to where~$\eta$ has most recently before time $\sigma$ hit its force point, and it is targeted at $\eta(\sigma)$.  Then $\eta_{\text{R}}'$ has the law of an $\SLE_{\kappa}(\rho_{1,\text{L}}';\rho_{1,\text{R}}',\rho_{2,\text{R}}')$ process in $D$ with $\rho_{1,\text{L}}' = \kappa'/2-2$, $\rho_{1,\text{R}}' = -(\kappa'/4)(2+\rho)$,  and $\rho_{1,\text{R}}'+\rho_{2,\text{R}}'=\kappa'-4$,  and the force points are located immediately to the left and right of its starting point and at $i\pi$.  Indeed, the reason for this is that $\rho_{1,\text{L}}'$, $\rho_{1,\text{R}}'$, and $\rho_{2,\text{R}}'$ are determined by the equations
\[ \lambda'(1+\rho_{1,\text{L}}') = \lambda + \frac{\pi\chi}{2}, \quad -\lambda'(1+\rho_{1,{\mathrm R}}') = \lambda(1+\rho) + \frac{\pi\chi}{2} ,\quad\text{and}\quad -\lambda'(1+\rho_{1,{\mathrm R}}'+\rho_{2,{\mathrm R}}') = -\lambda- \frac{3\pi \chi}{2}.\]

Let $\ol{T}$ be a stopping time for the filtration generated by $\eta|_{[0,\sigma]}$ and the time-reversal of $\eta|_{[\sigma,T_x]}$.  Note that the segment of $\eta$ from $\eta(\sigma)$ to $\eta(T_x)$ in the clockwise order has boundary conditions $\lambda'+\chi \cdot \text{winding}$ (resp.  $-\lambda'+\chi \cdot \text{winding}$) on its right (resp.  left) side.  Also,  the flow line interaction rules imply that the right outer boundary of $\eta_{\text{R}}'$ is given by the flow line of $h|_D$,  starting from $\eta(\sigma)$.  Conditionally on $\eta|_{[0,\sigma]}$,  we let $T_q'$ be the first time that $\eta_q'$ hits $\eta(\ol{T})$ for $q \in \{L,R\}$.  Then,  $\eta([\ol{T},T_x])$ is equal to the common part of the outer boundaries of $\eta_{\text{L}}'([0,T_L'])$ and $\eta_{\text{R}}'([0,T_R'])$.  Let $\eta_{\text{L}}$ (resp.\ $\eta_{\text{R}}$) be the right (resp.\ left) outer boundary of $\eta_{\text{L}}'([0,T_{\text{L}}'])$ (resp.\ $\eta_{\text{R}}'([0,T_{\text{R}}'])$).  Then,  arguing as in the proof of \cite[Lemma~5.13]{ms2016imag2} gives that the conditional law of $\eta|_{[\sigma,\ol{T}]}$ given $\eta|_{[0,\sigma]}$, $\eta_{\text{L}}$, $\eta_{\text{R}}$,  and $\eta|_{[\ol{T},T_x]}$,  is that of an $\SLE_{\kappa}(\kappa/2-2;\kappa/2-2)$ process in $D$ starting from $\eta(\sigma)$ and targeted at $\eta(\ol{T})$ with the force points located at $i \pi$ and at the left of the starting point of $\eta_{\text{R}}'$.

\noindent{\it Step 4.  Conclusion of the proof.} 
Now we are going to identify the law of $\eta|_{[S_x,T_x]}$ given $\eta|_{[0,S_x]}$ and $\eta(T_x)$.  Conditionally on $\eta|_{[0,T_{x}]}$, we sample an $\SLE_{\kappa}(\rho+2 ; \kappa-4-\rho)$ process $\wt{\eta}$ in $D$ from~$\eta(\sigma)$ to~$\eta(T_x)$ where the left (resp.\ right) force point is located at $i \pi$ (resp.\ $(\eta(S_x))^+$).  Let~$\wt{T}$ be a stopping time for $\wt{\eta}$ and $\wt{T}'$ be a reverse stopping time for $\wt{\eta}$ given $\wt{\eta}|_{[0,\wt{T}]}$ and assume that we are working on the positive probability event that $\wt{\eta}|_{[0,\wt{T}]}$ and $\wt{\eta}|_{[\wt{T}',\infty)}$ are disjoint. Let $A = \eta([0,\sigma]) \cup \wt{\eta}$ and let $\wt{h}$ be $\text{GFF}$ on $\strip \setminus A$ with the same boundary conditions as $h$ on $\partial{(\strip \setminus A)} \setminus \wt{\eta}$ and with boundary conditions along $\wt{\eta}$ as if $\wt{\eta}$ were the flow line of $\wt{h}$ starting from~$\eta(\sigma)$ and targeted at~$\eta(T_x)$. Let $\wt{\eta}_{L}$ (resp.\ $\wt{\eta}_{R}$) be the flow line of $\wt{h}$ starting from $\wt{\eta}(\wt{T}')$ with angle $\pi$ (resp.\ $-\pi$) and targeted at $i\pi$ (resp.  the point corresponding to where $\eta$ has most recently before time $\sigma$ hit its force point). Then by construction we have that the conditional law of $\wt{\eta}_{L}$ and $\wt{\eta}_{R}$ given $A$ is the same as the conditional law of $\eta_{L}$ and $\eta_{R}$ given $\eta|_{[0,T_{u}]}$ in the analogous set up for $\eta$. Moreover,  the proof of \cite[Proposition~5.12]{ms2016imag2} implies that the conditional law of $\wt{\eta}|_{[\wt{T},\wt{T}']}$ given $\wt{\eta}_{L}, \wt{\eta}_{R}, \eta|_{[0,\sigma]}, \wt{\eta}|_{[0,\wt{T}]}$ and $\wt{\eta}|_{[\wt{T}',\infty)}$ is that of an $\SLE_{\kappa}(\kappa/2 - 2; \kappa/2 - 2)$ process in the connected component of $\strip \setminus (\eta([0,\sigma]) \cup \wt{\eta}_{L} \cup \wt{\eta}_{R} \cup \wt{\eta}([0,\wt{T}]))$ with $\wt{\eta}(\wt{T})$ on its boundary from $\wt{\eta}(\wt{T})$ to $\wt{\eta}(\wt{T}')$.  Since the stopping time $\sigma$ was arbitrary, the proof will be complete if we manage to prove that this resampling property determines the law of the triple $(\eta, \eta_{L}, \eta_{R})$ conditional on $\eta|_{[0,T]}$ and $\eta|_{[T',T_x]}$ where $T,T'$ are the corresponding stopping times for $\eta$.
To see this, let $T_{1}, T_{2}$ be two stopping times for $\eta_{L},\eta_{R}$ respectively. Then the law of the triple $(\eta_{1}, \eta_{L}$, $\eta_{R})$ with $\eta_{1}$ = $\eta|_{[T,T']}$ conditional on $\eta|_{[0,T]}$, $\eta|_{[T',T_x]}$ , $\eta_{L}|_{[0,T_{1}]}$ and $\eta_{R}|_{[0,T_{2}]}$ is the same as that of ordinary flow lines of a $\text{GFF}$ conditioned on the positive probability event that $\eta_{L}$ and $\eta_{R}$ do not intersect since this conditioned law satisfies the same resampling properties. In particular, the marginal law of $(\eta_{L},\eta_{R})$ satisfies the hypothesis of \cite[Proposition 5.10]{ms2016imag2} with $\theta_{2} = -\pi$ and $\theta_{1} = \pi$, and so its marginal law is uniquely determined. Since we know the conditional law of $\eta_{1}$ given $(\eta_{L},\eta_{R})$, the law of the triple $(\eta_{1}, \eta_{L}, \eta_{R})$ is uniquely determined and this completes the proof.
\end{proof}

\section{Poissonian structure}
\label{sec:main_theorems_proof}

The purpose of this section is to prove Theorems~\ref{thm:quantum_natural_time_cutting} and~\ref{thm:boundary_length_evolution} as well as Corollary~\ref{cor:dim_of_boundary_intersection}.  Fix $\kappa \in (0,4)$ and $\rho \in (\kappa/2-4,-2) \cap (-2-\kappa/2,-2)$,  and set $\gamma = \sqrt{\kappa} \in (0,2)$.  The main step in establishing all of these results is to describe the Poissonian structure of the quantum surfaces which are cut out of a quantum wedge of weight $\rho+4$ by an independent $\SLE_{\kappa}(\rho)$ process.  Our strategy will be similar to the one followed in \cite[Section~6]{dms2014mating} for the $\rho \in (-2,\kappa/2-2)$ case.  However, as discussed in Section~\ref{subsec:reverse_coupling},  there will be some key differences in the proofs since $\SLE_{\kappa}(\rho)$ processes are not simple for the range of $\rho$ that we consider.

As in \cite{dms2014mating},  associated with the quantum surfaces which are cut out by the $\SLE_{\kappa}(\rho)$ process is a local time which comes from the encoding of such surfaces using Bessel processes (see \cite[Section~4]{dms2014mating}).  By \cite[Proposition~19.12]{dms2014mating},  we have that this local time should be considered as counting the number of small bubbles which have been cut out by the $\SLE_{\kappa}(\rho)$ process.  More precisely,  it is the limit as $\epsilon \to 0$ of the number of excised bubbles of quantum mass in $[\epsilon,2\epsilon]$,  times an appropriate power of $\epsilon$.  If we consider the right-continuous inverse of this local time,  then we obtain a different time-parameterization for the $\SLE_{\kappa}(\rho)$ process which is intrinsic to the bubbles that it cuts out viewed as quantum surfaces.  Following the terminology of \cite{dms2014mating}, we refer to this notion of time as ``quantum natural time''.  We will show that when the $\SLE_{\kappa}(\rho)$ process is drawn on top of a certain independent quantum wedge and it is parameterized by quantum natural time,  then the law of the surfaces is invariant under zipping according to a fixed amount of quantum natural time,  and hence completing the proof of Theorem~\ref{thm:quantum_natural_time_cutting}.  Moreover,  by combining the previous results with standard properties of stable processes,  we will identify the law of the process associated with the evolution of the changes in boundary lengths, and hence complete the proof of Theorem~\ref{thm:boundary_length_evolution}.  Finally,  we will give the proof of Corollary~\ref{cor:dim_of_boundary_intersection} as a consequence of Theorem~\ref{thm:boundary_length_evolution}.  

As before, we let $\strip = \R \times (0,\pi)$ be the the strip.  We also let $\strip_+ = \R_+ \times (0,\pi)$ be the positive part of the strip and $\strip_- = \R_- \times (0,\pi)$ be the negative part of the strip.

\subsection{Strategy}
\label{subsec:strategy}

We will give an outline of the strategy used to prove Theorem~\ref{thm:quantum_natural_time_cutting}, as it will take quite a bit of work to complete.

Let $h$ be a free boundary GFF on~$\h$ with a log singularity at $0$ and which is coupled with a reverse $\SLE_{\kappa}(\wt{\rho})$ process with $\wt{\rho} = \rho + 4$ as in Theorem~\ref{thm:reverse_coupling}.  More precisely,  we have that $h = \wh{h} - \frac{\rho + 2}{\gamma}\log |\cdot|$,  where $\wh{h}$ is a free boundary $\text{GFF}$ on $\h$.  Next,  we let $\eta$ be an $\SLE_{\kappa}(\rho)$ process in $\h$ from $0$ to $\infty$ with a single boundary force point at $0_+$ and which is independent of $h$.  We will fix the additive constant for $h$ so that the average of $h \circ f_{T_u}^{-1} + Q \log |(f_{T_u}^{-1})'|$ on $\h \cap \partial \D$ is equal to $0$,  where $T_u$ is the right-continuous inverse of the local time of $V-W$ at $0$ with $(V,W)$ being the driving pair for~$\eta$,  and $(f_t)$ the centered forward Loewner evolution.  We will also take $u>0$ to be large but fixed.
\begin{enumerate}
\item[Step 1.] We will describe the Poissonian structure of the quantum surfaces which are cut out by $\eta|_{[0,T_u]}$ from $\infty$ ordered reverse chronologically in comparison to how they are first visited by $\eta|_{[0,T_u]}$.  We will in particular show they have the same structure as that of the bubbles arising from a weight $\rho + 2$ quantum wedge.  This step will be carried out in Section~\ref{subsec:lightcone_on_free_boundary_gff} just below (see Theorem~\ref{thm:wedge_cutting}).
\item[Step 2.] The weight $\rho+2$ quantum wedge structure of these bubbles leads to a quantum measure which is supported on the union of $\eta([0,T_u]) \cap \partial \h$ and the set of self-intersection points of $\eta$.  This measure corresponds to the local time of the Bessel process encoding the weight $\rho+2$ wedge. Using this, we will see that the bubbles cut out by $\eta|_{[0,T_u]}$ (at least near the typical point) ordered according to when they are first visited by $\eta$ have the structure of a weight $\rho+2$ wedge.  This will be described in Section~\ref{subsec:natural_time} below.
\item[Step 3.]  Combining Steps 1 and 2, we will complete the proof of the first part of Theorem~\ref{thm:quantum_natural_time_cutting} by identifying the law of the bubbles which are cut out by $\eta$ when drawn on top of an independent weight $\rho + 4$ quantum wedge.  The weight $\rho + 4$ appears because of the combination of the initial singularity at the origin given by $-(2+\rho)/\gamma \log |\cdot|$ and an additional $-\gamma (2-d) \log |\cdot-w|$ singularity with $d = 1 + (2/\gamma^2)(\rho + 2)$ which arises from observing the field at a quantum typical point $w$ (using the measure from Step 2). This will be described in Section~\ref{subsec:natural_time}.
\item[Step 4.]  As for the second part of Theorem~\ref{thm:quantum_natural_time_cutting},  it will follow from the fact that if we shift the bubbles near the typical point when viewed as quantum surfaces by a fixed amount of local time,  then their Poissonian structure remains the same.  This will be described in Section~\ref{subsec:natural_time}.
\end{enumerate}

Once we have proved Theorem~\ref{thm:quantum_natural_time_cutting}, we will use the scaling properties to determine the boundary length evolution and complete the proof of Theorem~\ref{thm:boundary_length_evolution} in Section~\ref{subsec:boundary_length_evolution}.  We will also give the proof of Corollary~\ref{cor:dim_of_boundary_intersection} here.

\subsection{Poissonian structure of light cone path on a free boundary GFF}
\label{subsec:lightcone_on_free_boundary_gff}

We will now carry out Step 1 as described in Section~\ref{subsec:strategy}.  That is, we will determine the law of the quantum surfaces which are cut out by an $\SLE_{\kappa}(\rho)$ process for $\kappa \in (0,4)$ and $\rho \in (\kappa/2 - 4, - 2) \cap (- 2 - \kappa/2, - 2)$ drawn on top of a free boundary $\text{GFF}$ with an appropriate log singularity and with the additive constant fixed in a certain way.

\subsubsection{Statement}
\begin{theorem}
\label{thm:wedge_cutting}
Fix $\kappa \in (0,4)$, $\rho \in (\kappa/2 - 4 , - 2) \cap ( - 2 - \kappa/2 , - 2)$, $\wt{\rho} = \rho + 4$ and let $\gamma = \sqrt{\kappa}$. Let $\wh{h}$ be a free boundary $\text{GFF}$ on $\h$ and let 
\begin{align*}
h = \wh{h} + \frac{(2 - \wt{\rho})}{\gamma}\log|\cdot | = \wh{h} - \frac{2 + \rho}{\gamma}\log| \cdot|
\end{align*}
Let $\eta$ be an $\SLE_{\kappa}(\rho)$ process starting from $0$ with a single boundary force point of weight $\rho$ located at $0_+$ and assume that $\eta$ is independent of $h$. Let $(f_{t})$ be the centered Loewner flow for $\eta$, $(W , V)$ be its driving process, $L_{t}$ be the local time of $V - W$ at $0$, and let $T_{u} = \inf \{s >0 : L_{s} > u\}$ be the right-continuous inverse of $L_{t}$. Fix $u > 0$ large and assume that the additive constant for $h$ is fixed such that the average of $h \circ f_{T_{u}}^{-1} + Q\log|(f_{T{u}}^{-1})'|$ on $\h \cap \partial \D$ is equal to $0$. Then the law of the quantum surfaces parameterized by the bounded components of $\h \setminus  \eta([0,T_{u}])$ ordered in reverse from when they are first visited by $\eta$, is equal to those of a weight $\rho + 2$ wedge up to a time $s_{u}$ which tends to $\infty$ in probability as $u \to \infty$.
\end{theorem}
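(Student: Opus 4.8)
The plan is to follow the strategy of \cite[Section~6]{dms2014mating}, using the reverse coupling of Theorem~\ref{thm:reverse_coupling} as the main input, while accounting for the fact that the $\SLE_{\kappa}(\rho)$ processes at hand are non-simple, so that the surfaces which appear are beaded. First I would translate the setup into the reverse picture. Set $\wt{\rho} = \rho + 4$; the hypotheses give $\wt{\rho} \in (2-\kappa/2, 2)$, and since $\kappa < 4$ we have $\gamma = \min(\sqrt{\kappa}, 4/\sqrt{\kappa}) = \sqrt{\kappa}$, so Theorem~\ref{thm:reverse_coupling} applies. A direct computation from~\eqref{eqn:neumann_greens} gives $\wt{G}_{0}(0_+, z) = -2\log|z|$, hence $\wt{\Fh}_{0}(z) = \wh{\Fh}_{0}(z) + \tfrac{\wt{\rho}}{2\sqrt{\kappa}}\wt{G}_{0}(0_+, z) = \tfrac{2-\wt{\rho}}{\sqrt{\kappa}}\log|z| = -\tfrac{2+\rho}{\gamma}\log|z|$, so that the field $h = \wh{h} - \tfrac{2+\rho}{\gamma}\log|\cdot|$ of the statement is exactly $\wt{\Fh}_{0} + \wh{h}$ with $\wh{h}$ a free boundary GFF. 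The process $\wt{X}_{t} := \wt{f}_{t}(0_+)/\sqrt{\kappa}$ is a $\BES^{\wt{\delta}}$ with $\wt{\delta} = 1 + \tfrac{2(\wt{\rho}-2)}{\kappa} = 1 + \tfrac{2(\rho+2)}{\kappa}$; this lies in $(0,1)$ and, crucially, equals both the Bessel dimension $1 + \tfrac{2}{\gamma^{2}}(\rho+2)$ of a weight $\rho+2$ quantum wedge (recall~\eqref{eqn:bessel_dimension_wrt_weight}) and the dimension~\eqref{eqn:delta_and_rho} of the Bessel process encoding a forward $\SLE_{\kappa}(\rho)$. Since $\wt{\delta} \in (0,1)$, the process $\wt{X}$ has a local time $\wt{\ell}$ at $0$; let $\wt{\tau}_{v} = \inf\{t : \wt{\ell}_{t} > v\}$ be its a.s.\ finite right-continuous inverse.

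Next I would run the reverse flow up to $\wt{\tau}_{v}$ and read off the zipped-in surface. By Theorem~\ref{thm:reverse_coupling}, $\wt{\Fh}_{0} + \wh{h} \stackrel{d}{=} \wt{\Fh}_{\wt{\tau}_{v}} + \wh{h} \circ \wt{f}_{\wt{\tau}_{v}}$ modulo additive constant. Using the explicit form of $\wt{\Fh}_{t}$ together with the evolution equations established in Section~\ref{subsec:reverse_coupling} (in particular Lemma~\ref{lem:helping_lemma} and the formula there for $\log \wt{f}_{t}'$), the reverse Loewner flow zips into the origin a beaded quantum surface $\CS_{v}$ whose structure is governed by the excursions of $\wt{X}$ from $0$: decomposing $\wt{X}|_{[0,\wt{\tau}_{v}]}$ into excursions $e$, each excursion contributes one bubble of $\CS_{v}$, and the restriction of the field to that bubble has $\mathcal{H}_{1}$-projection given by $2\gamma^{-1}\log e$ reparameterized to have the correct quadratic variation and $\mathcal{H}_{2}$-projection an independent free boundary GFF. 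Comparing with Definition~\ref{def:thin_quantum_wedge} (note $\rho+2 \in (-\kappa/2, 0) \subseteq (-\gamma^{2}/2, \gamma^{2}/2)$, so a weight $\rho+2$ wedge is thin), this is precisely the initial segment --- of local-time length $v$ --- of a weight $\rho+2$ quantum wedge, and by the displayed identity the leftover field $\wt{\Fh}_{\wt{\tau}_{v}} + \wh{h} \circ \wt{f}_{\wt{\tau}_{v}}$ has the same law as $h$ modulo additive constant. One then fixes the additive constant by the $\h \cap \partial \D$-average-zero normalization in the statement so that this becomes an equality of laws.

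Then I would transfer this to the forward $\SLE_{\kappa}(\rho)$ of the statement. Because the Bessel dimension $\wt{\delta}$ matches~\eqref{eqn:delta_and_rho}, the time-reversal of the reverse $\SLE_{\kappa}(\rho+4)$-zipper is a forward $\SLE_{\kappa}(\rho)$ process $\eta'$ from $0$ to $\infty$ with a single force point at $0_+$, and --- arguing exactly as in \cite[Section~6]{dms2014mating} --- $\eta'$ is independent of the leftover field. Under this correspondence, for each $u$, running the reverse flow up to $\wt{\tau}_{v}$ for the appropriate $v = v(u)$ (which $\to \infty$ as $u \to \infty$) identifies $\CS_{v}$ with the collection of bubbles cut off from $\infty$ by $\eta'|_{[0,T_{u}]}$, ordered as in the statement, where $T_{u}$ is the inverse force-point local time of $\eta'$ at level $u$. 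Since the pair consisting of the leftover field and $\eta'$ has the same law as $(h, \eta)$, it follows that the bubbles cut off by $\eta|_{[0,T_{u}]}$ have the law of an initial portion of a weight $\rho+2$ wedge; writing $s_{u}$ for the length of the revealed portion in the natural strip parameterization, $s_{u}$ is finite, and because $\wt{X}$ --- equivalently the Bessel process encoding the weight $\rho+2$ wedge --- accumulates infinite local time at $0$, we obtain $s_{u} \to \infty$ in probability as $u \to \infty$, which is the remaining assertion.

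The main obstacle I anticipate lies in the last two steps rather than the first: one must make sense of the zipper in the non-simple regime --- constructing the beaded surface $\CS_{v}$ and the reverse-chronological ordering of its bubbles, and matching them bijectively with the excursion decomposition of $\wt{X}$ --- and then establish that the forward interface $\eta'$ is independent of the leftover field, a point for which the argument of \cite{dms2014mating} uses that the interface is simple. Nailing down the additive constant so that the identification is an equality of laws, rather than merely of laws modulo a constant, is a further technical issue, as is the precise relation between the reverse local-time clock of $\wt{X}$ and the forward clock $T_{u}$. By contrast, the chief analytic novelty --- that the reverse driving function is not a semimartingale when $\wt{\rho} < 2$ --- has already been handled inside Theorem~\ref{thm:reverse_coupling} via the $\eps$-$\BES$ approximation of Proposition~\ref{prop:epsilon_bessel}, so it does not resurface here.
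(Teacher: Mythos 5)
Your setup is the same as the paper's: pass to the reverse zipper via Theorem~\ref{thm:reverse_coupling}, note that $\wt{f}_t(0_+)/\sqrt{\kappa}$ is a $\BES^{\wt\delta}$ with $\wt\delta = 1+2(\rho+2)/\kappa \in (0,1)$ matching both~\eqref{eqn:delta_and_rho} and the Bessel dimension~\eqref{eqn:bessel_dimension_wrt_weight} of a weight $\rho+2$ wedge, run to an inverse local time, and transfer back to the forward $\SLE_\kappa(\rho)$ (this transfer is exactly the content of Lemma~\ref{lem:extension_lemma} and Lemma~\ref{lem:equality_in_law}). However, there is a genuine gap at the heart of your argument: you assert that the zipped-in surface $\CS_v$ decomposes over the excursions of $\wt X$ with each bubble's field having $\mathcal{H}_1(\strip)$-projection $2\gamma^{-1}\log e$ and $\mathcal{H}_2(\strip)$-projection an \emph{independent free boundary GFF}, independently across bubbles, as if this were a read-off from Theorem~\ref{thm:reverse_coupling} and the evolution formulas. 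It is not. The coupling theorem only gives equality in law of the ambient field before and after zipping; identifying the law of the individual zipped-in bubbles, and above all their mutual independence as quantum surfaces, is where the paper spends essentially all of Section~\ref{subsec:lightcone_on_free_boundary_gff}: one needs the filtration $\wt\CF_t$ and the stopping times $\wt\tau_j^{\epsilon,\overline\epsilon,r}$, Proposition~\ref{prop:form_of_drift} to identify the $\mathcal{H}_1$-projection as a drifted Brownian motion (hence a Bessel excursion of the right dimension), and then the analogue of \cite[Lemma~6.6]{dms2014mating} showing that the harmonic boundary data $\wh\psi_j^{\epsilon,\overline\epsilon,r}$ on $[0,i\pi]$, which a priori correlates the $\mathcal{H}_2$-projections of successive bubbles, becomes trivial as $\epsilon\to 0$. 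Without that lemma the bubbles are only conditionally independent given nontrivial harmonic data, and the weight $\rho+2$ wedge structure does not follow.

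Moreover, you mis-locate the novelty of the light cone regime: you state that the non-semimartingale issue absorbed into Theorem~\ref{thm:reverse_coupling} is the only new analytic point and that the rest goes through ``exactly as in \cite{dms2014mating}.'' In fact the proof of the triviality of $\wh\psi_j^{\epsilon,\overline\epsilon,r}$ cannot be repeated verbatim because the curve is self-intersecting and its force point need not lie on the domain boundary; the paper substitutes for the DMS argument the two new inputs Lemma~\ref{lem:sle_separates_points} (the curve a.s.\ separates each fixed boundary point from $\infty$ without hitting it, so the pockets $\CV_x$ exist) and Theorem~\ref{thm:law_of_sle_excursion} (the conditional law of an excursion, combined with the time-reversal symmetry of $\SLE_\kappa(\rho_1;\rho_2)$) to control the field near the closing point of a pocket and obtain the limiting harmonic data. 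Your proposal never invokes either of these, and the obstacles you do flag (beadedness, ordering, the additive constant, the relation between the reverse clock and $T_u$) are the comparatively routine parts handled by Lemmas~\ref{lem:extension_lemma} and~\ref{lem:equality_in_law} and the asymptotic coupling in the proof of Theorem~\ref{thm:wedge_cutting}. So the route is the right one, but the independence of the successively cut-out quantum surfaces --- the core of the theorem --- is missing from your argument.
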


Note that in our setup there are some subtleties in the proof of Theorem~\ref{thm:wedge_cutting} since the force point of an $\SLE_{\kappa}(\rho)$ process for the range of the values of $\rho$ that we consider might not lie on the boundary of the domain.  However the main idea of the proof of \cite[Theorem~6.1]{dms2014mating} still works in our case.

\subsubsection{Setup}
\label{sec:setup}

First, we recall from~\eqref{eqn:bessel_dimension_wrt_weight} that the dimension of the Bessel process $Y$ associated with a wedge of weight $\rho + 2$ is given by:
\begin{align}
\label{eq:bessel_dimension}
    \delta = \delta (\kappa , \rho) = 1 +  \frac{2\rho + 4}{\kappa} = 1 + \frac{2\rho + 4}{\gamma^{2}}
\end{align}
Note that the value of $\delta(\kappa , \rho)$ varies in $(0,1)$ as $\rho$ varies in $(-\kappa/2 -2 , -2)$.

Let $(g_{t})$  be the family of conformal maps solving the Loewner equation driven by $W$ and for all $t \geq 0$ let $P_{t}$ be the point of last intersection of $\eta$ either with itself or $\partial \h$ by time $t$. Then we know that $X =\kappa^{-\frac{1}{2}}(V - W)$ has the law of a Bessel process starting from zero of dimension $\delta(\kappa,\rho)$ where $V_{t} = g_{t}(P_{t}^+)$.

Let $(T_{u})$ be the family of the right-continuous inverses for the local time of $X$ at $0$.  We also consider $\wt{\eta}^0$ to be an $\SLE_{\kappa}(\rho)$ in $\h$ from $0$ to $\infty$ which is independent of $W$ and $h$ a free boundary $\text{GFF}$ on $\h$ independent of $(W , \wt{\eta}^0)$. Fix $u > 0$ and note that $T_{u} \to \infty$ a.s.\ as $u \to \infty$. We consider the process $\wt{X} = (\wt{X}_{t})$ with $\wt{X}_{t} = X_{T_{u} -t}$ for $0 \leq t \leq T_{u}$ and $\wt{X}_{t} = X_{t}$ for $t \geq T_{u}$, and with corresponding family of local times $(\wt{L}_{t}^x)$. We also consider the process $\wt{V} = (\wt{V}_{t})$ with $\wt{V}_{t} = V_{T_{u} - t} - V_{T_{u}}$ for $0 \leq t \leq T_{u}$ and $\wt{V}_{t} = - V_{t}$ for $t \geq T_{u}$. Note that the time-reversal symmetries of Bessel processes (see \cite[Proposition~3.1]{dms2014mating}) imply that $X$ and $\wt{X}$ have the same law. It is also easy to see that
\[ \wt{V}_{t} = -\frac{2}{\sqrt{\kappa}}\text{P.V.} \int_{0}^{t}\frac{1}{\wt{X}_{s}}ds.\]
Thus $\wt{W}$ has the law of a reverse Loewner driving function, where $\wt{W}_t = W_{T_u - t} - W_{T_u}$ for $0 \leq t \leq T_u$ and $\wt{W}_t = -V_t - \sqrt{\kappa}X_t$ for $ t \geq T_u$.

Let $(\wt{g}_t)$ be the family of conformal maps solving the reverse Loewner equation driven by $\wt{W}$ with $\wt{g}_0(z) = z$ and $\wt{f}_t(z) = \wt{g}_t(z)-\wt{W}_t$.  Let also $\wt{\eta}^{T_{u}}$ be the random curve such that $\wt{\eta}^{T_{u}}|_{[0,T_{u}]}$ has driving function $W^{T_{u}}$ given by $t \mapsto \wt{W}_{T_{u} - t} - \wt{W}_{T_{u}}$ and $\wt{\eta}^{T_{u}}(s) = \wt{f}_{T_u}(\wt{\eta}^0(s - T_{u}))$ for $s \geq T_{u}$.  For $0 \leq s \leq t$ we set $\wt{f}_{s,t} = \wt{f}_{t} \circ (\wt{f}_{s})^{-1}$ and for $0 \leq t \leq T_{u}$ we consider the continuous curve $\wt{\eta}^t$ defined by $\wt{\eta}^t(s) = (\wt{f}_{t,T_{u}})^{-1}(\wt{\eta}^{T_{u}}(T_{u} - t + s))$ for $0 \leq s \leq t$ and $\wt{\eta}^t(s) = \wt{f}_{t}(\wt{\eta}^0(s - t))$ for $s \geq t$. It is easy to check that $W_{s}^t = \wt{W}_{t - s} - \wt{W}_{t}$ and $g_{s}^t(z) = \wt{g}_{t - s}((\wt{f}_{t})^{-1}(z)) - \wt{W}_{t}$ where $W^t$ is the driving function of $\wt{\eta}^t|_{[0,t]}$ and $(g_{s}^t)$ the conformal maps solving the Loewner equation driven by $W^t$.

For all $0 \leq s \leq t \leq T_{u}$, let $P_{s}^t$ be the last point of intersection of $\wt{\eta}^t$ before time $s$ either with itself or with $\partial \h$, and set $V_{s}^t = g_{s}^t(P_{s}^t)$. It is again easy to see that $V_{s}^t = V_{T_{u} - t + s}^{T_{u}} - W_{T_{u} - t}^{T_{u}}$ by applying certain uniqueness arguments for the Loewner equation.  Also, the above imply that $\wt{U}_{t} = f_{T_{u} - t}^{T_{u}}(\wt{\eta}^{T_{u}}(s))$ where $s = \sup\{0 \leq r \leq T_{u} - t : \wt{\eta}^{T_{u}}(r) \in \partial \h  \cup \wt{\eta}^{T_{u}}([0,r))\}$ and $\wt{U}_{t} = \sqrt{\kappa}\wt{X}_{t}$.

By construction, $\wt{\eta}^{T_{u}}$ has the law of an $\SLE_{\kappa}(\rho)$ and so if $P$ is a pocket drawn by $\wt{\eta}^{T_{u}}$ and $S_{1}(P)$ is the left side of $P$ with zero angle boundary conditions, then every point on the right side of $S_{1}(P)$ is hit exactly once. Moreover, since the set of times during which $\wt{\eta}^{T_{u}}$ draws $S_{1}(Q)$ for some pocket $Q$, is dense in $\R_+$, we obtain that $\wt{\eta}^{T_{u}}((T_{u} - t,\infty))$ does not intersect the right side of $\wt{\eta}^{T_{u}}([s,T_{u} - t])$. This shows that $\wt{U}_{t}$ lies on the boundary of the connected component of $\h \setminus f_{T_{u} - t}^{T_{u}}(\wt{\eta}^{T_{u}}([T_{u} - t,\infty)))$ whose boundary contains $0_+$. Also, $\wt{\eta}^t([0,t]) = f_{T_{u} - t}^{T_{u}}(\wt{\eta}^{T_{u}}([T_{u} - t,T_{u}]))$ and $\wt{\eta}^t([t,\infty)) = \wt{f}_{t}(\wt{\eta}^0([0,\infty))) = f_{T_{u} - t}^{T_{u}}(\wt{\eta}^{T_{u}}([T_{u},\infty)))$ and hence $\wt{\eta}^t([0,\infty)) = f_{T_{u} - t}^{T_{u}}(\Tilde{\eta}^{T_{u}}([T_{u}-t,\infty)))$. Therefore $\wt{U}_{t}$ lies on the connected component of $\h \setminus \wt{\eta}^t([0,\infty))$ whose boundary contains the origin. We also observe that $\wt{\eta}^t([t,t + r]) = \wt{f}_{t}(\wt{\eta}^0([0,r]))$ for all $0 \leq t \leq T_{u}$ and for all $r > 0$, and that the hull of $\wt{\eta}^t([0,t])$ is equal to the hull of $\wt{f}_{t}(f_{T_{u}}^{T_{u}}(\wt{\eta}^{T_{u}}([0,T_{u}])))$. But $f_{T_{u}}^{T_{u}}(\wt{\eta}^{T_{u}}([0,T_{u}])) \subseteq \partial \h$ and so the hull of $\wt{\eta}^t([0,t + r])$ is equal to the hull of $\wt{f}_{t}(\partial \h \cup \wt{\eta}^0([0,r]))$.

Now set 
\begin{align*}
\wt{h}^{T_{u}} = h + \frac{2}{\gamma}\log|\cdot| - \frac{\wt{\rho}}{\gamma}\log|\cdot- \wt{U}_{T_u}| = h + \frac{2}{\gamma}\log|\cdot| - \frac{\wt{\rho}}{\gamma}\log|\cdot|
\end{align*}

By Theorem~\ref{thm:reverse_coupling}, we obtain that the random distributions
\begin{align}\label{eq:law_equivalence}
      &h + \frac{2 - \wt{\rho}}{\gamma}\log|\cdot|, \nonumber \\
     & h \circ \wt{f}_{T_{u}} + \frac{2-\wt{\rho}}{\gamma}\log|\wt{f}_{T_{u}}| +Q\log|(\wt{f}_{T_{u}})'|, \quad\text{and} \nonumber  \\
     & h \circ \wt{f}_{t} + \frac{2}{\gamma}\log|\wt{f}_{t}| - \frac{\wt{\rho}}{\gamma}\log|\wt{f}_{t} - \wt{U}_{t}| +Q\log|(\wt{f}_{t})'|
\end{align}
have the same law for all $0 \leq t \leq T$. Set 
\begin{align*}
\wt{h}^t = \wt{h}^{T_{u}} \circ \wt{f}_{t,T_{u}} + Q\log|(\wt{f}_{t,T_{u}})'|
\end{align*}
Then, the equality in law in~\eqref{eq:law_equivalence} implies that $\wt{h}^t$ and $h + \frac{2}{\gamma}\log|z| - \frac{\wt{\rho}}{\gamma}\log|z - \wt{U}_{t}|$ have the same law for all $0 \leq t \leq T_u$.

The above observations show that for every fixed $u \in (0,\infty)$, we can find a random family $(\wt{h}^t,\wt{\eta}^t)_{0\leq t \leq T_u}$, where $\wt{\eta}^t$ is a random continuous curve in $\h$ from $0$ to $\infty$ and $\wt{h}^t$ is a random distribution on $\h$ such that the following hold;
\begin{enumerate}[(i)]
\item $(\wt{\eta}^t)_{0 \leq t \leq T_u}$ is independent of $\wt{h}^0$ and $\wt{\eta}^0$ has the law of an $\SLE_{\kappa}(\rho)$.
\item $(W_{T_u - t}^{T_u} - W_{T_u}^{T_u})_{0 \leq t \leq T_u}$ has the law of a reverse Loewner driving flow restricted to $[0,T_u]$ with dimension $\delta(\kappa,\rho)$ and $(\wt{U}_{t}^t)_{0 \leq t \leq T_u}$ has the law of a Bessel process with dimension $\delta(\kappa,\rho)$ and multiplied by $\sqrt{\kappa}$, restricted to $[0,T_u]$, where $\wt{U}_{t}^{T_u} = V_{T_u - t}^{T_u} - W_{T_u - t}^{T_u} + V_{T_u}^{T_u} - W_{T_u}^{T_u}$. Here, $W^{T_u} |_{[0,T_u]}$ is the Loewner driving function of $\wt{\eta}^{T_u} |_{[0,T_u]}$.
\item  If $(\wt{f}_{t}^{T_u})_{0 \leq t \leq T_u}$ is the reverse centered Loewner flow associated with $(W_{T_u - t}^{T_u} - W_{T_u}^{T_u})_{0 \leq t \leq T_u}$, then for all $0 \leq t \leq T_u$ and $r >0 $, the hull of $\Tilde{\eta}^{t}([0,t + r])$ is equal to the hull of $\wt{f}_{t}(\partial \h \cup \wt{\eta}^0([0,r]))$.
\item $\wt{h}^s = \wt{h}^t \circ \wt{f}_{s,t}^{T_u} + Q\log|(\wt{f}_{s,t}^{T_u})'|$ and $\wt{h}^t$ can be expressed as the sum of a free boundary $\text{GFF}$ on $\h$ plus $\frac{2}{\gamma}\log|z| - \frac{\wt{\rho}}{\gamma}\log|z - \wt{U}_{t}|
$ for all $0 \leq s \leq t \leq T_u$.
\item $\wt{U}_{t}^{T_u}$ lies on the boundary of the component of $\h \setminus \wt{\eta}^t([0,\infty))$ whose boundary contains 0 for all $t \in [0,T_u]$.
\end{enumerate}

The following lemma gives us a way to identify the laws of the fields which arise when performing the zipping and the unzipping operations.  It is the analogue of \cite[Lemma~6.2]{dms2014mating}.

\begin{lemma}[$\text{\cite[Lemma~6.2]{dms2014mating}}$]
\label{lem:extension_lemma}
There exists a random family $(\wt{h}^t,\wt{\eta}^t)_{t \geq 0}$ where $\wt{h}^t$ is a distribution on $\h$ and $\wt{\eta}^t$ is a continuous curve on $\overline{\h}$ from $0$ to $\infty$ such that the following hold:
\begin{enumerate}[(i)]
\item $(\wt{\eta}^t)_{t \geq 0}$ is independent of $\wt{h}^0$ and $\wt{\eta}^0$ has the law of an $\SLE_{\kappa}(\rho)$.
\item $\wt{W} = (\wt{W}_{t})_{t \geq 0}$ with $\wt{W}_{t} = -W_{t}^t$ has the law of a reverse Loewner driving function with dimension $\delta$ and let $(\wt{f}_{t})_{t \geq 0}$ be the corresponding centered reverse Loewner flow. Also, $\wt{U} = (\wt{U}_{t})_{t \geq 0}$ with $\wt{U}_{t} = \wt{U}_{t}^t$ has the law of a Bessel process with dimension $\delta$ multiplied by $\sqrt{\kappa}$.
\item The hull of $\wt{\eta}^t([0,t + r])$ is equal to the hull of $\wt{f}_{t}(\partial \h  \cup \wt{\eta}^0([0,r]))$ for all $t \geq 0$ and $r > 0$.
\item  $\wt{U}_{t}$ lies on the boundary of the component of $\h \setminus \wt{\eta}^t([0,\infty))$ whose boundary contains the origin.
\item $\wt{h}^s = \wt{h}^t \circ \wt{f}_{s,t} + Q\log|(\wt{f}_{s,t})'|$ and $\wt{h}^t$ can be expressed as the sum of a free boundary $\text{GFF }$ on $\h$ plus $\frac{2}{\gamma}\log|z| - \frac{\wt{\rho}}{\gamma}\log|z - \wt{U}_{t}|$ for all $0 \leq s \leq t$.
\item $\wt{W} = (\wt{W}_{t})_{t \geq 0}$ is the reverse Loewner driving function corresponding to $\wt{U}$.
\end{enumerate}
\end{lemma}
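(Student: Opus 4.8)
The plan is to realize the family $(\wt h^t,\wt\eta^t)_{t\ge0}$ explicitly from its data at time $0$ and then transfer the properties (i)--(vi), which have already been verified on every finite range $[0,T_u]$ in the construction above, using that $T_u\uparrow\infty$ a.s. Concretely: let $\wt W$ be the reverse Loewner driving function associated with a $\BES^{\delta}$ process on all of $[0,\infty)$, with $\wt X$ the underlying $\BES^{\delta}$ process, $\wt U_t=\sqrt{\kappa}\,\wt X_t$, and $(\wt f_t)$ the centered reverse Loewner flow; let $\wt\eta^0$ be an independent $\SLE_{\kappa}(\rho)$; and let $\wt h^0$ be an independent free boundary GFF plus $\tfrac{2-\wt\rho}{\gamma}\log|\cdot|$, all viewed modulo additive constant. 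For $t\ge0$ define $\wt\eta^t$ by taking $\wt\eta^t|_{[0,t]}$ to be the forward Loewner trace with driving function $s\mapsto\wt W_{t-s}-\wt W_t$ and $\wt\eta^t(s)=\wt f_t(\wt\eta^0(s-t))$ for $s\ge t$, and set $\wt h^t=\wt h^0\circ\wt f_t^{-1}+Q\log|(\wt f_t^{-1})'|$. These are the same formulas used in the finite-$u$ construction, so that (vi), the flow identity $\wt h^s=\wt h^t\circ\wt f_{s,t}+Q\log|(\wt f_{s,t})'|$ of (v), and (i), (ii) all hold by construction.

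First I would record that for any stopping time $\sigma$ for the filtration generated by $\wt W$, the restricted family $(\wt h^t,\wt\eta^t)_{0\le t\le\sigma}$ is a measurable functional of the triple $(\wt W|_{[0,\sigma]},\wt\eta^0,\wt h^0)$, since $\wt\eta^t|_{[0,t]}$, $\wt\eta^t|_{[t,\infty)}$ and $\wt h^t$ each involve $\wt W$ only up to time $t\le\sigma$. Next I would apply this with $\sigma=T_u$ to compare with the parameter-$u$ construction of the preceding paragraphs: by the time-reversal symmetry of Bessel processes (\cite[Proposition~3.1]{dms2014mating}), $\wt W$ stopped at $T_u$ has the same law as the reverse driving function $W^{T_u}_{T_u-\cdot}-W^{T_u}_{T_u}$ of that construction, while $\wt\eta^0$ and $\wt h^0$ match, in law and in their independence of that driving function, the corresponding objects there; hence the triples agree in joint law. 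Applying the common measurable functional then shows that $(\wt h^t,\wt\eta^t)_{0\le t\le T_u}$ has the same law as the parameter-$u$ family for every $u>0$, and in particular inherits the analogues of (i)--(v) on $[0,T_u]$.

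It then remains to read off (iii), (iv) and the distributional half of (v) for all $t\ge0$. Each of these is, for a fixed $t$ (respectively a fixed $0\le s\le t$ or $r>0$), either an almost sure identity or a statement about the conditional law of $\wt h^t$; so for such a fixed $t$ I would use $\p[T_u\ge t]\to1$ together with the agreement in law just established to transfer the statement from the parameter-$u$ family on the event $\{T_u\ge t\}$, and then upgrade to all times along a countable dense set, invoking the continuity from Lemma~\ref{lem:square_integrable_martingale} for the display in (v). I expect the only delicate step to be the consistency argument just described---verifying that the restricted family genuinely depends only on $(\wt W|_{[0,\sigma]},\wt\eta^0,\wt h^0)$ and that the Bessel time-reversal symmetry is being invoked at the correct random horizon $T_u$---everything else being the routine transfer argument carried out in \cite[Section~6]{dms2014mating}.
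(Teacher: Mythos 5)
Your construction is correct and follows essentially the same route as the paper, which simply invokes the argument of \cite[Lemma~6.2]{dms2014mating}: build the family from the finite-horizon (parameter-$u$) construction, match laws at the inverse local time $T_u$ via the Bessel time-reversal symmetry, and let $u \to \infty$. The only point worth making explicit is that the a.s.\ existence of the Loewner trace driven by $s \mapsto \wt{W}_{t-s}-\wt{W}_t$ for all $t$ simultaneously (needed for your definition of $\wt{\eta}^t|_{[0,t]}$, since this is not an $\SLE_\kappa(\rho)$ driving function for general $t$) is itself obtained from the same transfer on $\{t \le T_u\}$, which your measurable-functional argument covers once stated.
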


\begin{proof}
Noting that Theorem~\ref{thm:reverse_coupling} is an analogue of \cite[Theorem~5.1]{dms2014mating},  the proof follows from the same argument used to prove \cite[Lemma~6.2]{dms2014mating}.
\end{proof}
We will assume throughout that $(\wt{h}^t)$ and $(\wt{\eta}^t)$ are as in the statement of Lemma~\ref{lem:extension_lemma}. We fix the additive constant for $\wt{h}^0$, hence $\wt{h}^t$ for all $t \geq 0$, by taking its average on $\h \cap \partial \D$ to be equal to $0$.  When shifting the above collection of pairs of fields and curves by a fixed amount of local time,  it will be important to identify the law of the resulting pair as seen by the description of our strategy for the proof of Theorem~\ref{thm:quantum_natural_time_cutting}.  It turns out that the laws of the pairs are invariant when zipping by a fixed amount of local time.  However,  we note that for general $t \geq 0$,  the curve $\wt{\eta}^t$ does not necessarily have the law of a forward $\SLE_{\kappa}(\rho)$ process.

\begin{lemma}[$\text{\cite[Lemma~6.3]{dms2014mating}}$]                                        \label{lem:equality_in_law}
Suppose that we have the setup as described above.  Let $\wt{L}_{t}$ denote the local time of $\wt{U}_{t}$ at $0$ and, for each $u > 0$, we let $\wt{T}_{u}$ be the right-continuous inverse of $\wt{L}_{t}$. For each $u > 0$, we have that $\wt{\eta}^{\wt{T}_{u}}$ has the law of a forward $\SLE_{\kappa}(\rho)$ process. In particular,  the laws of the fields $(\wt{h}^{\wt{T}_{u}},\wt{\eta}^{\wt{T}_{u}})$ and  $ (\wt{h}^0 , \wt{\eta}^0)$ are the same,
where $\wt{h}^{\wt{T}_{u}}$ is viewed as a distribution modulo additive constant.
\end{lemma}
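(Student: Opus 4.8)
The plan is to adapt the argument of \cite[Lemma~6.3]{dms2014mating}; the one genuinely new ingredient is a time-reversal identity for Bessel processes in the regime $\delta\in(0,1)$, where the reverse Loewner driving function fails to be a semimartingale. There are two points to establish: first, that $\wt\eta^{\wt T_u}$ has the law of a forward $\SLE_\kappa(\rho)$ process; and second, given this, that the pair $(\wt h^{\wt T_u},\wt\eta^{\wt T_u})$ has the same law as $(\wt h^0,\wt\eta^0)$.

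For the first point, recall from parts~(ii) and~(vi) of Lemma~\ref{lem:extension_lemma} that $\wt U$ is $\sqrt\kappa$ times a $\BES^\delta$ process $\wt X$ with $\delta=1+2(\rho+2)/\kappa\in(0,1)$ and that $\wt W$ is the reverse Loewner driving function corresponding to $\wt U$, while from the construction preceding Lemma~\ref{lem:extension_lemma} (the identity $W^t_s=\wt W_{t-s}-\wt W_t$) the forward Loewner driving function of the drawn part $\wt\eta^{\wt T_u}|_{[0,\wt T_u]}$ is $s\mapsto\wt W_{\wt T_u-s}-\wt W_{\wt T_u}$. Writing $\wt W_t=-\tfrac{2}{\sqrt\kappa}\text{P.V.}\!\int_0^t\wt X_s^{-1}\,ds-\sqrt\kappa\,\wt X_t$ as in Section~\ref{subsubsec:reverse_sle_kappa_rho}, using that $\wt X_{\wt T_u}=0$ (since $\wt T_u$ is an inverse local time of $\wt X$ at $0$), and the additivity of the principal value integral together with the change of variables $r\mapsto\wt T_u-r$, one obtains
\[
\wt W_{\wt T_u-s}-\wt W_{\wt T_u}=\frac{2}{\sqrt\kappa}\,\text{P.V.}\!\int_0^s\hat X_v^{-1}\,dv-\sqrt\kappa\,\hat X_s,\qquad\hat X_v:=\wt X_{\wt T_u-v}.
\]
By the time-reversal symmetry of Bessel processes run up to an inverse local time at $0$ (\cite[Proposition~3.1]{dms2014mating}, which also matches up the local-time parts of the principal value corrections under time reversal), $\hat X$ has the law of a $\BES^\delta$ run up to an inverse local time at $0$; hence the right-hand side above is exactly the driving function of a forward $\SLE_\kappa(\rho)$ run up to a time at which its tip coincides with its force point. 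By part~(iii) of Lemma~\ref{lem:extension_lemma}, the continuation $\wt\eta^{\wt T_u}|_{[\wt T_u,\infty)}$ is the independent $\SLE_\kappa(\rho)$ curve $\wt\eta^0$ transported by the centered reverse flow $\wt f_{\wt T_u}$; since at a fixed time the centered reverse and forward Loewner maps are mutually inverse (as recorded in the construction preceding Lemma~\ref{lem:extension_lemma}), this is precisely the forward-Loewner reconstruction dictated by the conformal Markov property of $\SLE_\kappa(\rho)$. Invoking this Markov property at the stopping time $\wt T_u$ — at which the configuration is that of a fresh $\SLE_\kappa(\rho)$ with its force point at the tip, on the $0_+$ side by continuity — together with the continuity of $\SLE_\kappa(\rho)$ in the light cone regime (Theorem~\ref{thm:ligthcone_and_sle}), we conclude that the concatenation $\wt\eta^{\wt T_u}$ is a forward $\SLE_\kappa(\rho)$ from $0$ to $\infty$.

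For the second point, I would argue as in \cite[Lemma~6.3]{dms2014mating}. By part~(v) of Lemma~\ref{lem:extension_lemma} and $\wt U_{\wt T_u}=0$, the field $\wt h^{\wt T_u}$ is a free boundary GFF plus $\tfrac{2-\wt\rho}{\gamma}\log|\cdot|=-\tfrac{2+\rho}{\gamma}\log|\cdot|$, which has the same law, modulo additive constant, as $\wt h^0$; moreover $\wt h^{\wt T_u}$ is obtained from $\wt h^0$ by the conformal coordinate change associated with $\wt f_{\wt T_u}$, and because $\wt U_{\wt T_u}=0$ the resulting singularity sits at $0$ irrespective of $\wt U$, so the conditional law of $\wt h^{\wt T_u}$ given $\wt U$ does not depend on $\wt U$. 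On the other hand $\wt\eta^{\wt T_u}$ is a measurable function of $(\wt U,\wt\eta^0)$, and $\wt h^0$ is independent of $(\wt U,\wt\eta^0)$; hence, conditionally on $\wt U$, the field $\wt h^{\wt T_u}$ and the curve $\wt\eta^{\wt T_u}$ are independent. Combining these facts, $\wt h^{\wt T_u}$ is independent of $(\wt U,\wt\eta^{\wt T_u})$, and in particular of $\wt\eta^{\wt T_u}$; since the marginal law of $\wt h^{\wt T_u}$ is that of $\wt h^0$, that of $\wt\eta^{\wt T_u}$ is that of $\wt\eta^0$ by the first point, and $\wt h^0$ is independent of $\wt\eta^0$, the pair $(\wt h^{\wt T_u},\wt\eta^{\wt T_u})$ has the same law as $(\wt h^0,\wt\eta^0)$.

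The main obstacle is the first point: one has to justify the time-reversal manipulation of the principal value integral and the use of \cite[Proposition~3.1]{dms2014mating} in the non-semimartingale regime $\delta\in(0,1)$, and then carefully apply the conformal Markov property and the continuity of $\SLE_\kappa(\rho)$ in the light cone regime in order to concatenate across the self-intersecting curve at time $\wt T_u$. The manipulation of the log singularities and the conditional-independence argument in the second point are routine and carry over essentially verbatim from \cite{dms2014mating}.
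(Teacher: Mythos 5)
Your treatment of the first point is correct and is in substance the argument the paper appeals to: writing the driving function of the drawn part as $s\mapsto \wt W_{\wt T_u-s}-\wt W_{\wt T_u}$, using $\wt X_{\wt T_u}=0$ together with the additivity of the principal value correction, invoking the time-reversal symmetry of $\BES^\delta$ at an inverse local time (\cite[Proposition~3.1]{dms2014mating}, exactly as in the construction of Section~\ref{sec:setup}), and then attaching the independent continuation through $\wt f_{\wt T_u}=(f^{\wt T_u}_{\wt T_u})^{-1}$ via the strong Markov property of the driving Bessel process at $T_u$. This is the same route as the proof of \cite[Lemma~6.3]{dms2014mating} that the paper cites.

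The second point, however, contains a genuine gap. You claim that ``$\wt h^{\wt T_u}$ is obtained from $\wt h^0$ by the conformal coordinate change associated with $\wt f_{\wt T_u}$,'' and this premise underlies both of your key deductions (that the conditional law of $\wt h^{\wt T_u}$ given $\wt U$ does not depend on $\wt U$, and that $\wt h^{\wt T_u}$ and $\wt\eta^{\wt T_u}$ are conditionally independent given $\wt U$). The coordinate change goes the other way: by Lemma~\ref{lem:extension_lemma}(v) one has $\wt h^0=\wt h^{\wt T_u}\circ\wt f_{\wt T_u}+Q\log|\wt f_{\wt T_u}'|$, and since $\wt f_{\wt T_u}$ maps $\h$ onto the complement of a hull, this only determines $\wt h^{\wt T_u}$ \emph{off} the zipped-in hull. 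The restriction of $\wt h^{\wt T_u}$ to that hull --- precisely the quantum surfaces welded in between local times $0$ and $u$, whose law is the whole point of Theorem~\ref{thm:wedge_cutting} --- is new randomness that is not measurable with respect to $(\wt h^0,\wt U)$. Consequently your flow-independence claim and your conditional-independence claim do not follow from ``$\wt h^0$ is independent of $(\wt U,\wt\eta^0)$''; indeed, conditionally on the flow after time $\wt T_u$ as well, the law of $\wt h^{\wt T_u}$ is that of a GFF pulled back through the corresponding zipping map (covariance $G_\h^N(\wt f_{\cdot}(y),\wt f_{\cdot}(z))$ rather than $G_\h^N(y,z)$), so it is not flow-independent in the form you assert. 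A second, smaller issue is that Lemma~\ref{lem:extension_lemma}(v) is a fixed-time statement, so even the marginal law of $\wt h^{\wt T_u}$ at the random time $\wt T_u$ needs an argument. Both issues are repaired by the route the paper (following \cite[Lemma~6.3]{dms2014mating}) takes: use your first point to identify the law of the full configuration at time $\wt T_u$ with that of the fixed-$u$ construction of Section~\ref{sec:setup} at time $T_u$, where by construction $\wt h^{T_u}=h+\tfrac{2-\wt\rho}{\gamma}\log|\cdot|$ is a function of $h$ alone, hence independent of $(W,\wt\eta^0)$ and thus of $\wt\eta^{T_u}$, and has the law of $\wt h^0$ modulo additive constant by Theorem~\ref{thm:reverse_coupling}. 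With that identification the equality in law of the pairs is immediate; as written, your shortcut does not close the argument.
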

\begin{proof}
It follows from the proof of \cite[Lemma~6.3]{dms2014mating}.
\end{proof}

We are now going to define the objects which are used in the proof of Theorem~\ref{thm:wedge_cutting}.  We will follow the same terminology as in \cite[Section~6.2]{dms2014mating}.  For each $t \geq 0$ such that $\wt{U}_{t} \neq 0$, we let $\wt{B}_{t}$ denote the component of $\h \setminus \wt{\eta}^t([0,\infty))$ with $\wt{U}_{t}$ on its boundary. If $\wt{U}_{t} = 0$, we take $\wt{B}_{t} = \emptyset$. $\wt{B}_{t}$ is a Jordan domain as its boundary consists of part of the curve $\wt{\eta}^t$ and an interval in $\partial \h$.  For $t\geq 0$ such that $\wt{B}_{t} \neq \emptyset$, we let
\begin{enumerate}[(i)]
\item $\wt{\zeta}_{t}$ be the closing point of $\partial{\wt{B}_{t}}$ (the opening point is $0$). 
\item $\wt{\varphi}_{t}: \wt{B}_{t}\rightarrow \strip$ be the unique conformal transformation which takes $\wt{U}_{t}$ to $+\infty$, $0$ to $0$, and $\wt{\zeta}_{t}$ to $-\infty$. 
\item $\wh{h}^t = \wt{h}^t \circ \wt{\varphi}_{t}^{-1} + Q\log|(\wt{\varphi}_{t}^{-1})'|$.
\item $\wh{X}^t$ be the projection of $\wh{h}^t$ onto $\CH_{1}(\strip)$
\end{enumerate}

We note that in the setup of Lemma~\ref{lem:extension_lemma},  the transformation from $(\wt{h}^s,\wt{\eta}^s)$ to $(\wt{h}^t,\wt{\eta}^t)$ for $t>s$ corresponds to zipping up for $t-s$ units of capacity time and the transformation from $(\wt{h}^s,\wt{\eta}^s)$ to $(\wt{h}^t,\wt{\eta}^t)$ for $t<s$ corresponds to unzipping for $s-t$ units of capacity time.  The reason that we consider the quantum surfaces parameterized by the domains $\wt{B}_t$ is the following.  When we perform the forward Loewner flow at times when the $\SLE_{\kappa}(\rho)$ process completes an entire excursion either away from $\partial \h$ or away from itself,  we have that a region with finite and positive quantum area parameterized by some $\wt{B}_t$ is cut out from $\infty$.  If we apply the reverse Loewner flow,  then a region with positive quantum area parameterized by some $\wt{B}_t$ is added all at once. Moreover,  we have that every quantum surface parameterized by a bubble lying to the right of the $\SLE_{\kappa}(\rho)$ process $\eta$ will correspond to a newly zipped in quantum surface parameterized by some $\wt{B}_t$ (modulo coordinate change).  Furthermore,  the latter is determined by the values of the field in a very small region and the conditional law of the field given its values in that region is very close to the unconditioned law.  The latter fact will lead to the quantum surfaces which are cut out by $\eta$ to be independent.

\subsubsection{Filtration and stopping times}
First,  we start by constructing the filtration and stopping times that we are going to consider.  The definitions and notation are similar to those in \cite[Section~6.2.3]{dms2014mating}.  Suppose that we have the setup of Lemma~\ref{lem:extension_lemma}.  When performing the zipping and the unzipping operations,  we need to keep track of the path and the field as well.  However,  as we mentioned at the end of Section~\ref{sec:setup},  we will only need to know the values of the field outside of the region that is currently zipped in.  Also,  it will be convenient to use $\strip$ as the underlying domain.  We also note that the dimension of the Bessel process encoding the bubbles which are zipped in is determined by the form of the projection of the field which describes such a bubble onto $\mathcal{H}_1(\strip)$ with fixed additive constant and it does not depend on its projection onto $\mathcal{H}_2(\strip)$.

The above observations motivate to consider the following filtration.  Consider the process $\wt{V} = (\wt{V}_{s})_{s \geq 0}$ with $\wt{V}_{s} = \wt{U}_{s} + \wt{W}_{s}$ and for $t \geq 0$ we let $\wt{\CF_{t}}$ be the $\sigma$-algebra with respect to which the following are measurable:
\begin{enumerate}[(i)]
\item Both $\eta = \wt{\eta}^0$ and the driving pair $(\wt{W}_{s},\wt{V}_{s})$ of the reverse Loewner flow for $s \leq t$.
\item The field $\wt{h}^t$ restricted to $\h \setminus \wt{B}_{t}$ (i.e., $\wt{h}^t$ restricted to any component of $\h \setminus \wt{\eta}^t$ other than the one currently been generated).
\item The restriction of $\wh{h}^t$ to $\strip_-$.
\end{enumerate}

Note that $\wt{\eta}^t([0,t]) = f_{T_u - t}^{T_u}(\wt{\eta}^{T_u}([T_u - t ,  T_u]))$ for $t < T_u$ and so $\wt{\eta}^t([0,t])$ is determined by $W_{s+T_u -t}^{T_u}-W_{T_u - t}^{T_u},  s \in [0,t]$.  But $\wt{\eta}^{T_u}$ hits either itself or $\R_+$ at time $T_u$ and so $\wt{B}_t$ is a connected component of $\h \setminus \wt{\eta}^t([0,t])$.  Also,  $W_{s+T_u - t}^{T_u} - W_{T_u}^{T_u} = \wt{W}_{t-s}-\wt{W}_t$ for all $s \in [0,t]$ and so $\wt{B}_t$ is determined by $(\wt{W}_s,\wt{V}_s),  0 \leq s \leq t$.  It follows that $\wt{\zeta}_t,\wt{\varphi}_t$,  and $\wh{X}^t|_{\R_-}$ are determined by $\wt{\CF}_t$.  Moreover,  by arguing as in \cite[Section~6.2.3]{dms2014mating}, we obtain that $(\wt{\CF}_{t})$ is a filtration.

Next we fix $t_{1},t_{2} \in (0,\infty)$ such that $\wt{U}_{t_{1}} = \wt{U}_{t_{2}} = 0$ and $\wt{U}_{r} > 0 $ for all $r \in (t_{1},t_{2})$. Fix $t_{0} \in (t_{1},t_{2})$. Then the quantum length of $\partial{\wt{B}_{t}}$ with respect to $\wt{h}^t$ remains the same for all $t \in (t_{1},t_{2})$, since $\wt{h}^t|_{\wt{B}_{t}}$ and $\wt{h}^s|_{\wt{B}_{s}}$ remain equivalent as surfaces for all $t,s \in (t_{1},t_{2}) $. Also the same holds for the quantum length of $[\wt{U}_{t},\wt{\zeta}_{t}]$ with respect to $\wt{h}^t$ for $t \in (t_{1},t_{2})$. This shows that the quantum length of $\partial{\wt{B}_{t}} \setminus \partial{\h}$ tends to $0$ as $t \to t_{1}$ and it tends to the quantum length of the clockwise part of $\partial{\wt{B}_{t_{0}}}$ from $\wt{U}_{t_{0}}$ to $\wt{\zeta}_{t_{0}}$ as $t \to t_{2}$. Hence if $\alpha(t)$ is such that $\wh{h}^t$ comes by translating $\wh{h}^{t_{0}}$ horizontally by $\alpha(t)$ units, then $\alpha(t) \to -\infty$ as $t \to t_{1}$ and $\alpha(t) \to +\infty$ as $t \to t_{2}$, and $\alpha$ is continuous in $t$. Therefore if $\sup_{u \in \R}(\wh{X}^{t_{0}}_{u}) \geq r$, then there exists $t \in (t_{1},t_{2})$ such that $\inf\{u \in \R : \wh{X}^{t}_{u} = r\} =0$.

Now we are ready to define the stopping times that we are going to consider.  We note that our goal is to analyze the structure of the bubbles which are zipped in when viewed as quantum surfaces modulo coordinate change.  It will be convenient for the proof of Theorem~\ref{thm:quantum_natural_time_cutting} to analyze the structure of a given bubble which has only been partially zipped in.  Also,  we will first analyze the collection of ``large bubbles'' in the sense of quantum area and then obtain an asymptotic coupling of these bubbles with the bubbles cut out from the $\SLE_{\kappa}(\rho)$ process as their quantum areas tend to zero.  Following again \cite[Section~6.2.3]{dms2014mating},  we define the stopping times as follows.

Fix $\epsilon > 0, \overline{\epsilon} > 0$ and $r \in \R$. We define stopping times inductively as follows. We let $\wt{\tau_{1}}^{\epsilon,\overline{\epsilon},r}$ be the first time $t \geq 0$ such that the following hold simultaneously:
\begin{enumerate}[(i)]
\item The quantum length of $\partial{\wt{B}_{t}} \setminus \partial{\h}$ is at least $\epsilon$ with respect to the $\gamma$-$\text{LQG}$ length measure induced by $\wt{h}^t$,
\item $\inf\{u \in \R : \wh{X}^{t}_{u} = r\} =0$, and 
\item The $\gamma$-$\text{LQG}$ mass of $\strip_-$ associated with the field $\wh{h}^t$ is at least $\overline{\epsilon}$.
\end{enumerate}
We note that the time $\wt{\tau}_{1}^{\epsilon,\overline{\epsilon},r}$ is a.s.\ finite since our previous observations imply that if $\sup_{u \in \R}\wh{X}^{t}_{u}$ is at least $r$, there will be a time $s$ such that $\wh{X}^s$ first hits $r$ at $u = 0$.

We then let $\wt{\sigma}_{1}^{\epsilon,\overline{\epsilon},r}$ be the first time $t$ after time $\wt{\tau}_{1}^{\epsilon,\overline{\epsilon},r}$ that $\wt{U}_{t} = 0$. Given that $\wt{\tau}_{j}^{\epsilon,\overline{\epsilon},r},\wt{\sigma}_{j}^{\epsilon,\overline{\epsilon},r}$ for $1 \leq j \leq k$ have been defined for some $k \in \N$, we let $\wt{\tau}_{k + 1}^{\epsilon,\overline{\epsilon},r}$ be defined in exactly the same way as $\wt{\tau}_{1}^{\epsilon,\overline{\epsilon},r}$ except with $t \geq \wt{\sigma}_{k}^{\epsilon,\overline{\epsilon},r}$. We then let $\wt{\sigma}_{k}^{\epsilon,\overline{\epsilon},r}$ be the first time $t$ after time $\wt{\tau}_{k + 1}^{\epsilon,\overline{\epsilon},r}$ that $\wt{U}_{t} = 0$.

For each $j$, we let $\wh{X}^{j,\epsilon,\overline{\epsilon},r}$ be given by $\wh{X}^{\wt{\tau}_{j}^{\epsilon,\overline{\epsilon},r}}$ and $\wh{h}_{j}^{\epsilon,\overline{\epsilon},r}$ be given by $\wh{h}^{\wt{\tau}_{j}^{\epsilon,\overline{\epsilon},r}}$. We emphasize that the process $ \wh{X}_{u}^{j,\epsilon,\overline{\epsilon},r}$ hits $r$ for the first time at time $0$.

As we mentioned earlier,  the law of the sequences $(\wh{X}^{j,\epsilon,\overline{\epsilon},r})$ and $(\wh{X}^{j,\overline{\epsilon},r}) = (\wh{X}^{j,0,\overline{\epsilon},r})$ will determine the dimension of the Bessel process encoding the quantum surfaces parameterized by the bubbles which are zipped in.  In particular,  we have that $(\wh{X}^{j,\epsilon,\overline{\epsilon},r})$ and $(\wh{X}^{j,\overline{\epsilon},r})$ are both given by independent Brownian motions with a common downward drift.  Following \cite[Section~4]{dms2014mating},  we have that each such Brownian motion corresponds to an excursion of the Bessel process away from $0$.  We record the result about the form of the drift in the following proposition.

\begin{proposition}[$\text{\cite[Proposition~6.4]{dms2014mating}}$]\label{prop:form_of_drift}
Let 
\begin{align*}
\alpha = \frac{\rho + 4}{\gamma} - Q = \frac{\rho + 2}{\gamma} - \frac{\gamma}{2}.
\end{align*}
Let $\wh{B}_{2t}^{j,\epsilon,\wt{\epsilon},r} = \alpha t - \wh{X}_{t}^{j,\epsilon,\wt{\epsilon},r}$ for $t \geq 0$. Given $\wt{\CF}_{\tau_{j}^{r,\epsilon,\overline{\epsilon}}}$, the process $\wh{B}^{j,\epsilon,\overline{\epsilon},r}$ is a standard Brownian motion with $\wh{B}_{0}^{j,\epsilon,\overline{\epsilon},r} = -r$.
\end{proposition}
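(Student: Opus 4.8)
The plan is to follow the proof of \cite[Proposition~6.4]{dms2014mating}, adapting it to the present setting where part of $\partial\wt{B}_t$ is an $\SLE_\kappa(\rho)$ arc and the driving function is not a semimartingale. The starting point will be to use item~(v) of Lemma~\ref{lem:extension_lemma} to write $\wt{h}^t = H_t^{\mathrm{fb}} + \frac{2}{\gamma}\log|\cdot| - \frac{\wt{\rho}}{\gamma}\log|\cdot - \wt{U}_t|$ on $\h$, with $H_t^{\mathrm{fb}}$ a free boundary GFF, and then to push this forward to $\strip$ under $\wt{\varphi}_t$. Since a conformal map sends a free boundary GFF to a free boundary GFF modulo additive constant and the term $Q\log|(\wt{\varphi}_t^{-1})'|$ is exactly the change of coordinates in~\eqref{eqn:change_of_coordinates}, this yields a decomposition $\wh{h}^t = H_t + \mathfrak{g}_t$ on $\strip$, where $H_t$ is a free boundary GFF on $\strip$ and $\mathfrak{g}_t(w) = Q\log|(\wt{\varphi}_t^{-1})'(w)| + \frac{2}{\gamma}\log|\wt{\varphi}_t^{-1}(w)| - \frac{\wt{\rho}}{\gamma}\log|\wt{\varphi}_t^{-1}(w) - \wt{U}_t|$ is deterministic given $\wt{\CF}_t$ (recall that $\wt{\varphi}_t$ is $\wt{\CF}_t$-measurable).

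The key geometric observation I would then make is that $\wt{\varphi}_t$ maps the entire part of $\partial\wt{B}_t$ sent into the closure of $\strip_+$, apart from the single point sent to $0$, into the interval of $\partial\h$ forming part of $\partial\wt{B}_t$: indeed the opening point $0$ and the closing point $\wt{\zeta}_t$ are the two endpoints of that interval, the remainder of $\partial\wt{B}_t$ is the curve-arc (which $\wt{\varphi}_t$ sends into $\R_-\times\{0\}$), and the force point $\wt{U}_t$, which is sent to $+\infty$, lies in the relative interior of the interval. Consequently $\mathfrak{g}_t$ is harmonic on $\strip_+$ with Neumann boundary conditions on $\R_+\times\{0\}$ and $\R_+\times\{\pi\}$, so its projection $\overline{\mathfrak{g}}_t(u) = \frac{1}{\pi}\int_0^\pi \mathfrak{g}_t(u+iv)\,dv$ onto $\CH_1(\strip)$ satisfies $\overline{\mathfrak{g}}_t'' \equiv 0$ on $(0,\infty)$ and is therefore affine there. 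To identify its slope I would use that near the straight boundary point $\wt{U}_t$ the map is logarithmic, $\wt{\varphi}_t^{-1}(w) = \wt{U}_t + Ce^{-w}(1+o(1))$ as $\re w \to +\infty$, which gives $\mathfrak{g}_t(w) = \big(\frac{\wt{\rho}}{\gamma} - Q\big)\re w + O(1)$; hence the slope equals $\frac{\wt{\rho}}{\gamma} - Q = \frac{\rho+4}{\gamma} - Q = \alpha$, deterministic and independent of $t$ (and consistent with $\alpha = \frac{\rho+2}{\gamma} - \frac{\gamma}{2}$ being the drift of the $\CH_1$-projection of a weight $\rho+2$ wedge, i.e.\ with the dimension in~\eqref{eq:bessel_dimension}).

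Finally I would insert the randomness and the stopping time. Fix $\epsilon,\overline{\epsilon} > 0$ and $r\in\R$ and write $\wt{\tau}_j = \wt{\tau}_j^{\epsilon,\overline{\epsilon},r}$. The data in $\wt{\CF}_{\wt{\tau}_j}$ determines $\mathfrak{g}_{\wt{\tau}_j}$, the restriction $\wh{h}^{\wt{\tau}_j}|_{\strip_-}$ (hence in particular $\wh{X}^{\wt{\tau}_j}_0 = r$ by the definition of $\wt{\tau}_j$), and the trace of $H_{\wt{\tau}_j}$ along the curve-arc, which maps to $\R_-\times\{0\}$. By the domain Markov property of the free boundary GFF applied to the splitting $\strip = \strip_- \cup (\{0\}\times[0,\pi]) \cup \strip_+$, conditionally on $\wt{\CF}_{\wt{\tau}_j}$ the field $H_{\wt{\tau}_j}|_{\strip_+}$ is the sum of the harmonic extension of $H_{\wt{\tau}_j}|_{\{0\}\times(0,\pi)}$ into $\strip_+$ with Neumann data on the horizontal sides and an independent free boundary GFF on $\strip_+$ vanishing on $\{0\}\times(0,\pi)$; the $\CH_1$-projection of the first is constant in $u$, while that of the second is a standard Brownian motion $B$ run at speed $2$ started from $0$. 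Combining with the previous paragraph, conditionally on $\wt{\CF}_{\wt{\tau}_j}$ we obtain $\wh{X}^{\wt{\tau}_j}_u = \alpha u + r + B_{2u}$ for $u \geq 0$, so that $\wh{B}^{j,\epsilon,\overline{\epsilon},r}_{2u} = \alpha u - \wh{X}^{j,\epsilon,\overline{\epsilon},r}_u = -r - B_{2u}$ is a standard Brownian motion with $\wh{B}^{j,\epsilon,\overline{\epsilon},r}_0 = -r$, as desired.

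The step I expect to be the main obstacle is the strong Markov application in the last paragraph: because $\wt{U}/\sqrt{\kappa}$ is a Bessel process of dimension $\delta\in(0,1)$, the driving function $\wt{W}$ is not a semimartingale, so one cannot directly run the stopping-time and It\^o arguments of \cite{dms2014mating}. The plan to get around this is the device already used for Theorem~\ref{thm:reverse_coupling}: first carry out the whole construction with $\wt{U}/\sqrt{\kappa}$ replaced by its $\epsilon$-$\BES^\delta$ approximation from Proposition~\ref{prop:epsilon_bessel}, for which the statement follows from the argument above together with classical stopping-time arguments, and then pass to the limit $\epsilon\to 0$ using the uniform convergence in Proposition~\ref{prop:epsilon_bessel} --- the decomposition $\wh{h}^t = H_t + \mathfrak{g}_t$ and the affineness of $\overline{\mathfrak{g}}_t$ being stable under this limit. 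A secondary point to verify carefully is that the curve-arc of $\partial\wt{B}_t$, which carries the genuinely irregular boundary data, meets the closure of $\strip_+$ only at the point mapped to $0$, so that it does not affect the Neumann analysis of $\mathfrak{g}_t$ and $H_t$ on $\strip_+$.
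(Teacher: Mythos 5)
Your overall route is the intended one (the paper simply invokes the proof of \cite[Proposition~6.4]{dms2014mating}): push $\wt{h}^{\wt{\tau}_j}$ forward to $\strip$, isolate the part coming from the singularity $\frac{2}{\gamma}\log|\cdot|-\frac{\wt{\rho}}{\gamma}\log|\cdot-\wt{U}_t|$ together with the $Q\log|(\wt{\varphi}_t^{-1})'|$ term, read off the slope $\frac{\wt{\rho}}{\gamma}-Q=\alpha$ from the logarithmic behaviour of $\wt{\varphi}_t^{-1}$ near $+\infty$, and use the Markov property of the GFF so that the fluctuation on $\strip_+$ contributes a standard Brownian motion; your geometric observation that both boundary rays of $\strip_+$ are images of straight boundary (so that $\wt{\varphi}_t^{-1}(w)-\wt{U}_t$ decays like $e^{-\re w}$ and the Neumann analysis applies) is exactly the point that needs checking in the light cone setting, and your drift computation is correct. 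However, there is a genuine flaw in the decomposition step. The map $\wt{\varphi}_t$ sends $\wt{B}_t$, a strict subdomain of $\h$, onto $\strip$, and the restriction to $\wt{B}_t$ of a free boundary GFF on $\h$ is \emph{not} a free boundary GFF on $\wt{B}_t$; hence the field $H_t$ in your identity $\wh{h}^t=H_t+\mathfrak{g}_t$ is not a free boundary GFF on $\strip$, and the subsequent application of the domain Markov property across $\{0\}\times[0,\pi]$ in $\strip$ is applied to an object for which it does not hold. The correct argument applies the Markov property of the free boundary GFF in $\h$ to the subdomain $\wt{B}_{\wt{\tau}_j}$: given $\wt{\CF}_{\wt{\tau}_j}$ (which contains $\wt{h}^{\wt{\tau}_j}|_{\h\setminus\wt{B}_{\wt{\tau}_j}}$ and $\wh{h}^{\wt{\tau}_j}|_{\strip_-}$), the field on $\wt{\varphi}_{\wt{\tau}_j}^{-1}(\strip_+)$ is a GFF with Dirichlet data on the vertical cross-cut and on $\partial\wt{B}_{\wt{\tau}_j}\cap\h$ and free data on $\partial\wt{B}_{\wt{\tau}_j}\cap\R$, plus an $\wt{\CF}_{\wt{\tau}_j}$-measurable harmonic correction. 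Since the curve part of $\partial\wt{B}_{\wt{\tau}_j}$ maps into $\R_-\times\{0\}$, the only Dirichlet boundary touching $\strip_+$ is the cross-cut, so the fluctuation is as you describe; but the conditional mean contains an extra harmonic term determined by the curve-arc data (not only by the values on $\{0\}\times(0,\pi)$), which your argument drops. It is harmless --- it is harmonic in $\strip_+$ with Neumann data on the two horizontal rays and its gradient decays as $\re w\to+\infty$, so its $\CH_1(\strip)$-projection is constant on $\R_+$ --- but this must be stated and proved; as written, the decomposition is false and the conclusion does not follow from it.

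A secondary point: the obstacle you single out at the end (that $\wt{W}$ is not a semimartingale) is not where the remaining work lies. That difficulty has already been absorbed into Theorem~\ref{thm:reverse_coupling} and Lemma~\ref{lem:extension_lemma}, which is precisely what item~(v) you invoke rests on; re-running the $\epsilon$-$\BES^{\delta}$ approximation inside this proof is unnecessary. What does require care at $\wt{\tau}_j^{\epsilon,\overline{\epsilon},r}$ is the conditioning step for the \emph{field}: justifying that, given $\wt{\CF}_{\wt{\tau}_j}$ (a $\sigma$-algebra to which the stopping time is adapted but which is generated partly by the field itself), the conditional law on $\strip_+$ has the two-step form above, using that $\wt{\varphi}_{\wt{\tau}_j}$, $\mathfrak{g}_{\wt{\tau}_j}$ and $\wh{X}^{\wt{\tau}_j}|_{\R_-}$ are $\wt{\CF}_{\wt{\tau}_j}$-measurable and that $\wh{X}^{\wt{\tau}_j}_0=r$ by condition~(ii) in the definition of the stopping time. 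This is a measurability/Markov-property argument as in \cite[Proposition~6.4]{dms2014mating}, not a stochastic-calculus issue for the driving function.
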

\begin{proof}
It follows from the proof of \cite[Proposition~6.4]{dms2014mating}.
\end{proof}

\subsubsection{Proof of Theorem~\ref{thm:wedge_cutting}}

Proposition~\ref{prop:form_of_drift} implies that the projections of the fields $(\wh{h}_j^{\epsilon,\overline{\epsilon},r})_{j \in \N}$ onto $\mathcal{H}_1(\strip)$ and restricted to $\strip_+$ are i.i.d.  and they are related to the excursions of a Bessel process from $0$.  However,  in order to prove Theorem~\ref{thm:wedge_cutting},  we will need to know the dependency between the projections of the fields onto $\mathcal{H}_2(\strip)$ and restricted to $\strip_+$.  Note that the Markov property of the $\text{GFF}$ implies that conditionally on $\wt{\CF}_{\wt{\tau}_j^{\epsilon,\overline{\epsilon},r}}$,  the field $\wh{h}_j^{\epsilon,\overline{\epsilon},r}|_{\strip_+}$ can be expressed as the sum of a $\text{GFF}$ in $\strip_+$ with Dirichlet boundary conditions on $[0,\pi i]$ and free boundary conditions on $\partial \strip_+ \setminus [0,\pi i]$.  Hence,  it follows that the way the field $\wh{h}_j^{\epsilon,\overline{\epsilon},r}|_{\strip_+}$ is correlated with the fields $\wh{h}_1^{\epsilon,\overline{\epsilon},r},\cdots,\wh{h}_{j-1}^{\epsilon,\overline{\epsilon},r}$ is encoded by the function which is harmonic in $\strip_+$ with boundary conditions given by the values of $\wh{h}_j^{\epsilon,\overline{\epsilon},r}$ on $[0,\pi i]$ and Neumann boundary conditions on $\partial \strip_+ \setminus [0,\pi i]$.

Next we follow the notation and the strategy of \cite[Section~6.2.5]{dms2014mating}. First,  we note that the quantum length of $(-\infty,0]$ with respect to $\wh{h}_{j}^{\epsilon,\overline{\epsilon},r}$ is at least $\epsilon > 0$ by the definition of $\wt{\tau}_{j}^{\epsilon,\overline{\epsilon},r}$, and so there exists a unique $\wh{\omega}_{j}^{\epsilon,\overline{\epsilon},r} \in (-\infty,0]$ such that the quantum length of $(-\infty,\wh{\omega}_{j}^{\epsilon,\overline{\epsilon},r}]$ with respect to $\wh{h}_{j}^{\epsilon,\overline{\epsilon},r}$ is equal to $\epsilon > 0$. Also, a.s.\ there exist  unique $t_{j,1}^{\epsilon,\overline{\epsilon},r},t_{j,2}^{\epsilon,\overline{\epsilon},r} > 0$ such that $\wt{\tau}_{j}^{\epsilon,\overline{\epsilon},r} \in (t_{j,1}^{\epsilon,\overline{\epsilon},r},t_{j,2}^{\epsilon,\overline{\epsilon},r})$ and $\wt{U}_{t_{j,1}^{\epsilon,\overline{\epsilon},r}} = \wt{U}_{t_{j,2}^{\epsilon,\overline{\epsilon},r}} = 0$. We let $t_{j}^{\epsilon,\overline{\epsilon},r}$ be the first time $t$ in $(t_{j,1}^{\epsilon,\overline{\epsilon},r},\wt{\tau}_{j}^{\epsilon,\overline{\epsilon},r}]$ such that the quantum length of $\partial{\wt{B}_{t}} \setminus \partial{\h}$ with respect to $\wt{h}^t$ is equal to $\epsilon > 0$. Equivalently, $t_{j}^{\epsilon,\overline{\epsilon},r}$ is the first time $t$ in $(t_{j,1}^{\epsilon,\overline{\epsilon},r},\wt{\tau}_{j}^{\epsilon,\overline{\epsilon},r}]$ such that the quantum length of $(-\infty,0]$ with respect to $\wh{h}^t$ is equal to $\epsilon > 0$. Note that $\wh{h}^{t_{j}^{\epsilon,\overline{\epsilon},r}}$ can be derived by translating $\wh{h}_{j}^{\epsilon,\overline{\epsilon},r}$ horizontally by $\wh{\omega}_{j}^{\epsilon,\overline{\epsilon},r}$ units. Let $\wh{\psi}_{j}^{\epsilon,\overline{\epsilon},r}$ be the function which is harmonic in $[\wh{\omega}_{j}^{\epsilon,\overline{\epsilon},r},+\infty) \times [0,i\pi]$ with Neumann boundary conditions on the horizontal parts of the strip boundary and at $+\infty$, and Dirichlet boundary conditions on $\wh{\omega}_{j}^{\epsilon,\overline{\epsilon},r} + [0,i\pi]$ with boundary values given by those of $\wh{h}_{j}^{\epsilon,\overline{\epsilon},r}$. Then $\wh{\psi}_{j}^{\epsilon,\overline{\epsilon},r}$ determines the harmonic part of $\wh{h}_{j}^{\epsilon,\overline{\epsilon},r}$ on $[\wh{\omega}_{j}^{\epsilon,\overline{\epsilon},r},+\infty) \times [0,i\pi]$. In particular, $\wh{\psi}_{j}^{\epsilon,\overline{\epsilon},r}$ determines the harmonic part of $\wh{h}_{j}^{t_{j}^{\epsilon,\overline{\epsilon},r}}$ on $\strip_+$. By applying the Markov property to $\wh{h}_{j}^{t_{j}^{\epsilon,\overline{\epsilon},r}}$, we obtain that conditional on $\wh{\psi}_{j}^{\epsilon,\overline{\epsilon},r}$, the restriction of $\wh{h}_{j}^{t_{j}^{\epsilon,\overline{\epsilon},r}}$ to $\strip_-$ has the law of a zero boundary $\text{GFF}$ on $\strip_-$ plus the harmonic extension to $\strip_-$ of the function whose boundary values coincide with those of $\wh{h}_{j}^{t_{j}^{\epsilon,\overline{\epsilon},r}}$. Since the zero boundary part is independent of of $\wh{h}_{1}^{\epsilon,\overline{\epsilon},r},\cdots,\wh{h}_{j - 1}^{\epsilon,\overline{\epsilon},r}$ and the harmonic part is determined by $\wh{\psi}_{j}^{\epsilon,\overline{\epsilon},r}$, we obtain that conditional on $\wh{\psi}_{j}^{\epsilon,\overline{\epsilon},r}, \wh{h}_{j}^{\epsilon,\overline{\epsilon},r}$ is independent of $\wh{h}_{1}^{\epsilon,\overline{\epsilon},r},\cdots,\wh{h}_{j -1 }^{\epsilon,\overline{\epsilon},r}$.
As we explained above,  if we show that the functions $\wh{\psi}_j^{\epsilon,\overline{\epsilon},r}$ become trivial as $\epsilon \to 0$,  then this will imply that the bubbles which are successively zipped in are independent quantum surfaces.  This is the purpose of the next lemma.

\begin{lemma}[$\text{\cite[Lemma~6.6]{dms2014mating}}$]
Fix $\overline{\epsilon} > 0, j \in \N$ and $r \in \R$. There exists a sequence $(\epsilon_{k})$ of positive numbers decreasing to $0$ such that $\wh{\psi}_{j}^{\epsilon_{k},\overline{\epsilon},r}$ a.s.\ converges to the $0$ function on $\strip_+$ as $k \to +\infty$ with respect to the topology of local uniform convergence modulo a global additive constant.
\end{lemma}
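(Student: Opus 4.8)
The plan is to adapt the proof of \cite[Lemma~6.6]{dms2014mating}; here is the structure I would use. Recall that $\wh{\psi}_{j}^{\epsilon,\overline{\epsilon},r}$ is the harmonic function on $[\wh{\omega}_{j}^{\epsilon,\overline{\epsilon},r},+\infty)\times[0,i\pi]$ with Neumann conditions on the horizontal sides and at $+\infty$ whose Dirichlet data on the segment $\ell_{\epsilon}:=\wh{\omega}_{j}^{\epsilon,\overline{\epsilon},r}+[0,i\pi]$ is the restriction of $\wh{h}_{j}^{\epsilon,\overline{\epsilon},r}$. Since local uniform convergence on $\strip_{+}$ modulo a global additive constant is a metrizable topology determined by a countable exhausting sequence of compact subsets of $\strip_{+}$, it suffices to show that for each fixed compact $K\subseteq\strip_{+}$ the oscillation of $\wh{\psi}_{j}^{\epsilon,\overline{\epsilon},r}$ over $K$ tends to $0$ in probability as $\epsilon\to0$; a diagonal extraction then yields a deterministic sequence $(\epsilon_{k})$ decreasing to $0$ along which the convergence holds almost surely and simultaneously over the whole family, which is exactly the claim.

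To bound this oscillation I would combine two facts. First, the Dirichlet boundary recedes: by Proposition~\ref{prop:form_of_drift} the projection of $\wh{h}_{j}^{\epsilon,\overline{\epsilon},r}$ onto $\CH_{1}(\strip)$ is, up to the downward drift $\alpha<0$, a Brownian motion first hitting $r$ at time $0$, and arguing as in \cite{dms2014mating} the quantum length of $(-\infty,a]$ with respect to $\wh{h}_{j}^{\epsilon,\overline{\epsilon},r}$ is a.s.\ finite for $a$ sufficiently negative and decreases to $0$ as $a\to-\infty$; since this length equals $\epsilon$ at $a=\wh{\omega}_{j}^{\epsilon,\overline{\epsilon},r}$, we get $\wh{\omega}_{j}^{\epsilon,\overline{\epsilon},r}\to-\infty$ a.s., hence $\dist(K,\ell_{\epsilon})\to\infty$ a.s. Second, write the Dirichlet data on $\ell_{\epsilon}$ as its mean $\bar{c}_{\epsilon}$ plus a mean-zero remainder $g_{\epsilon}$; the $\CH_{1}(\strip)$-component contributes only to $\bar{c}_{\epsilon}$ (through $\wh{X}^{j,\epsilon,\overline{\epsilon},r}$ evaluated at $\wh{\omega}_{j}^{\epsilon,\overline{\epsilon},r}$, which need not converge and is what forces the additive constant in the statement). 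Reflecting $\wh{\psi}_{j}^{\epsilon,\overline{\epsilon},r}-\bar{c}_{\epsilon}$ evenly across the horizontal sides and expanding in the Neumann modes $\cos(ny)$ gives $(\wh{\psi}_{j}^{\epsilon,\overline{\epsilon},r}-\bar{c}_{\epsilon})(x+iy)=\sum_{n\ge1}a_{n}^{\epsilon}e^{-n(x-\wh{\omega}_{j}^{\epsilon,\overline{\epsilon},r})}\cos(ny)$, so that by Cauchy--Schwarz the oscillation of $\wh{\psi}_{j}^{\epsilon,\overline{\epsilon},r}$ over $K$ is at most $C_{K}\,\|g_{\epsilon}\|\,e^{-\dist(K,\ell_{\epsilon})}$ for a fixed negative-index Sobolev norm $\|\cdot\|$.

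Putting these together, $e^{-\dist(K,\ell_{\epsilon})}\to0$ a.s.\ by the first fact, so everything reduces to showing that the family $\{\|g_{\epsilon}\|\}_{\epsilon}$ is tight. Here $g_{\epsilon}$ is, up to a smooth contribution coming from the distant logarithmic singularities and the conformal coordinate change that is negligible for this norm, the trace on $\ell_{\epsilon}$ of the free boundary GFF component of $\wh{h}_{j}^{\epsilon,\overline{\epsilon},r}$; since this component is translation invariant in law and $\ell_{\epsilon}$ has fixed length $\pi$, the norms $\|g_{\epsilon}\|$ are tight, whence the oscillation tends to $0$ in probability and the reduction above applies. I expect this last point --- tightness of $\|g_{\epsilon}\|$, given that $\ell_{\epsilon}$ is selected in a way that depends on the field --- to be the only delicate issue, and it is handled exactly as in \cite[Lemma~6.6]{dms2014mating}. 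The only genuine change in the light cone regime is the value of the drift, $\alpha=(\rho+2)/\gamma-\gamma/2$, which is negative throughout $\rho\in(\kappa/2-4,-2)\cap(-2-\kappa/2,-2)$ and therefore still guarantees $\wh{\omega}_{j}^{\epsilon,\overline{\epsilon},r}\to-\infty$.
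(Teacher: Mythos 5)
The deterministic half of your argument (the cosine-mode expansion and the bound $\osc_K \wh{\psi}_{j}^{\epsilon,\overline{\epsilon},r} \lesssim \|g_\epsilon\| e^{-\dist(K,\ell_\epsilon)}$) is fine, but the two probabilistic inputs you feed into it are not justified, and they are exactly where the difficulty of the lemma lies. Both the field $\wh{h}_{j}^{\epsilon,\overline{\epsilon},r}$ and the segment $\ell_\epsilon$ depend on $\epsilon$: the stopping time $\wt{\tau}_{j}^{\epsilon,\overline{\epsilon},r}$ is defined by quantum-geometric conditions and $\wh{\omega}_{j}^{\epsilon,\overline{\epsilon},r}$ sits at quantum length $\epsilon$ from the closing point, so the observation that for a \emph{fixed} field the length of $(-\infty,a]$ decreases to $0$ does not by itself give $\wh{\omega}_{j}^{\epsilon,\overline{\epsilon},r}\to-\infty$ (one also needs finiteness of the boundary length near the closing point and stabilization of the $\epsilon$-dependent stopping times). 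More seriously, the centered trace $g_\epsilon$ is the field of the partially zipped-in bubble evaluated on a field-dependent arc shrinking toward the closing point of the pocket, including the distortion term $Q\log|(\wt{\varphi}_{t}^{-1})'|$ of the uniformizing map; its law is \emph{not} that of (a translate of) a free boundary GFF. Indeed the whole point of the lemma is that the law of $\wh{h}_{j}^{\epsilon,\overline{\epsilon},r}$ near $\ell_\epsilon$ encodes the correlation with the previously zipped-in bubbles, so tightness of $\|g_\epsilon\|$ cannot be extracted from translation invariance; it requires genuine control of the joint small-scale behavior of the curve and the field near the closing point at quantum scale $\epsilon$. Asserting that this last point "is handled exactly as in \cite[Lemma~6.6]{dms2014mating}" begs the question, since in the light cone regime that control is precisely what is not available off the shelf.

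This is also where your route diverges from the paper's. The paper does not run a receding-boundary estimate at all: it passes to the stationary curve $\wt{\eta}^{\wt{T}_u}$ (an honest $\SLE_\kappa(\rho)$), fixes a typical pocket $\CV_x$ using Lemma~\ref{lem:sle_separates_points}, and shows that the \emph{law} of the harmonic part $\psi^{\epsilon}$ read off at quantum distance $\epsilon$ from the closing point converges weakly as $\epsilon\to 0$; by the argument of \cite[Lemma~6.6]{dms2014mating} this weak convergence is what yields the a.s.\ subsequential convergence of $\wh{\psi}_{j}^{\epsilon_k,\overline{\epsilon},r}$ to a constant. The weak convergence is obtained from Theorem~\ref{thm:law_of_sle_excursion} (the conditional law of the excursion closing the pocket is an $\SLE_{\kappa}(\rho+2;\kappa-4-\rho)$), the time-reversal symmetry of $\SLE_\kappa(\rho_1;\rho_2)$, the scale invariance of the free boundary GFF and its independence from the curve, together with a total variation comparison between the uniformization of the full pocket and of the domain cut out by the terminal piece of the curve. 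So your closing claim that "the only genuine change in the light cone regime is the value of the drift" is wrong: the self-intersecting, light-cone nature of the curve changes the excursion structure, and Lemma~\ref{lem:sle_separates_points} and Theorem~\ref{thm:law_of_sle_excursion} were proved precisely to supply the inputs used at this step. To complete your approach you would still have to establish, uniformly in $\epsilon$, tightness of the trace near the closing point, which amounts to the same local analysis the paper carries out.
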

\begin{proof}
The proof follows essentially from the same argument used in the proof of \cite[Lemma~6.6]{dms2014mating}.  Nevertheless,  we are going to emphasize the parts of the proof where Lemma~\ref{lem:sle_separates_points} and Theorem~\ref{thm:law_of_sle_excursion} are used.

Fix $u > 0$ large. For $x > 0$ rational,  we let $\CV_{x}$ be the connected component of $\h \setminus \wt{\eta}^{\wt{T}_{u}}$ with $x$ on its boundary. Note that $\wt{\eta}^{\wt{T}_{u}}$ has the law of an $\SLE_{\kappa}(\rho)$ process and so by Lemma~\ref{lem:sle_separates_points} we obtain that a.s.\ such connected component exists for all $x \in \Q_+$.  We fix $\epsilon >0$ and we assume that we are working on the event that the quantum length of $\partial{\CV}_x \setminus \partial{\h}$ with respect to $\wt{h}^{\wt{T}_{u}}$ is at least $\epsilon > 0$.  Note that the probability of this event tends to 1 as $\epsilon \to 0$ for $x \in \Q_+$ fixed.  We let $\xi$(resp.\ $\zeta$) be the first (resp.\ last) time that $\wt{\eta}^{\wt{T}_{u}}$ hits a point on $\partial{\CV_x}$ and let $\varphi^{\epsilon} : \CV_x \rightarrow \strip$ be a conformal transformation with $\varphi^{\epsilon}(\wt{\eta}^{\wt{T}_{u}}(\xi)) = +\infty$ and $\varphi^{\epsilon}(\wt{\eta}^{\wt{T}_{u}}(\zeta)) = -\infty$. Note that $\varphi^{\epsilon}$ is determined up to a horizontal translation, so we can determine $\varphi^{\epsilon}$ by requiring that the $\gamma$-$\text{LQG}$ boundary measure of $(-\infty,0]$ associated with $\wt{h}^{\wt{T}_{u}} \circ (\varphi^{\epsilon})^{-1} + Q\log|((\varphi^{\epsilon})^{-1})'|$ is equal to $\epsilon$.

Let $\psi^{\epsilon}$ be the function which is harmonic in $\strip_+$ with Neumann boundary conditions on $\partial{\strip_+} \setminus [0,i\pi]$ and Dirichlet boundary conditions on $[0,i\pi]$ with boundary values given by the values of $\wt{h}^{\wt{T}_{u}} \circ (\varphi^{\epsilon})^{-1} + Q\log|((\varphi^{\epsilon})^{-1})'|$ on $[0,i\pi]$.  Then,  arguing as in the proof of \cite[Lemma~6.6]{dms2014mating},  we obtain that if we show that as $\epsilon \to 0$,  the law of $\psi^{\epsilon}$ converges weakly with respect to the topology of local uniform convergence in $\strip_+$ modulo a global additive constant to a function which is harmonic in $\strip_+$,  then we have that $\wh{\psi}_j^{\epsilon,\overline{\epsilon},r}$ will converge to a constant. Therefore,  it suffices to show the convergence of $\psi^{\epsilon}$ as $\epsilon \to 0$.

We are going to deduce this  from Theorem~\ref{thm:law_of_sle_excursion} and the independence of $\wt{\eta}^{\wt{T}_{u}}$ and $\wt{h}^{\wt{T}_{u}}$ (viewed modulo additive constant). For each $\delta > 0$ we let $\eta^{\delta}(t) = \delta^{-1} (\wt{\eta}^{\wt{T}_{u}}(\zeta - t) - \wt{\eta}^{\wt{T}_{u}}(\zeta))$. Then Theorem~\ref{thm:law_of_sle_excursion} combined with the time-reversal symmetries of $\SLE_{\kappa}(\rho_1 ; \rho_2)$ processes (\cite[Theorem 1.1]{ms2016imag2}) imply that the law of $\eta^{\delta}$ converges as $\delta \to 0$ to the law of a continuous curve in $\overline{\h}$. We note that $(\wt{h}^{\wt{T}_{u}},\wt{\eta}^{\wt{T}_{u}})$ and $(\wt{h}^0,\wt{\eta}^0)$ have the same law, where we view the distributions modulo additive constants. Let $h^{\delta}$ be the field which arises  by precomposing $\wt{h}^{\wt{T}_{u}}$ with $z \mapsto \delta (z + \wt{\eta}^{\wt{T}_{u}}(\zeta))$. Then $h^{\delta}$ (modulo additive constants) converges as $\delta \to 0$ to a free boundary $\text{GFF}$ on $\h$ by the scale and translation invariance of free boundary $\text{GFF}$s and the independence of $\wt{h}^{\wt{T}_{u}}$ (modulo additive constant) and $\wt{\eta}^{\wt{T}_{u}}$. Therefore $(\eta^{\delta},h^{\delta})$ converges in law as $\delta \to 0$ to the law of a pair consisting of a continuous curve in $\overline{\h}$ and an independent free boundary $\text{GFF}$ on $\h$.

For each $t \in (0,\zeta - \xi)$,  we let $\wt{\varphi}_t^{\epsilon}$ be the conformal transformation mapping $\h \setminus \wt{\eta}^{\wt{T}_u}([\zeta-t,\zeta])$ onto $\strip$ with $\wt{\eta}^{\wt{T}_u}(\zeta)$ mapped to $-\infty$ and $\wt{\eta}^{\wt{T}_u}(\zeta-t)$ mapped to $+\infty$,  and the horizontal translation fixed so that the $\gamma-\text{LQG}$ boundary length of $(-\infty,0]$ with respect to $\wt{h}^{\wt{T}_u} \circ (\wt{\varphi}_t^{\epsilon})^{-1} + Q \log |((\wt{\varphi}_t^{\epsilon})^{-1})'|$ is equal to $\epsilon$.  Then,  arguing as in the proof of \cite[Lemma~6.6]{dms2014mating},  we obtain that as $t,\epsilon \to 0$ with $\epsilon \to 0$ at a sufficiently fast rate relative to the rate at which $t \to 0$,  the law of the function which is harmonic in $\strip_+$ with Neumann boundary conditions on $\partial \strip_+ \setminus [0,\pi i]$ and with Dirichlet boundary conditions on $[0,\pi i]$ given by the values of $\wt{h}^{\wt{T}_u} \circ (\wt{\varphi}_t^{\epsilon})^{-1} + Q \log |((\wt{\varphi}_t^{\epsilon})^{-1})'|$ on $[0,\pi i]$ converges weakly with respect to the topology of local uniform convergence modulo global additive constant.

Finally,  arguing as in the last paragraph of the proof of \cite[Lemma~6.6]{dms2014mating} gives that the total variation distance between the fields $\wt{h}^{\wt{T}_u} \circ (\wt{\varphi}_t^{\epsilon})^{-1} + Q \log |((\wt{\varphi}_t^{\epsilon})^{-1})'|$ and $\wt{h}^{\wt{T}_u} \circ (\varphi^{\epsilon})^{-1} + Q \log |((\varphi^{\epsilon})^{-1})'|$ tends to $0$ as $t \to 0$ and $\epsilon$ tends to $0$ much faster than $t$.  This proves the convergence of $\varphi^{\epsilon}$ and hence the convergence of $\psi^{\epsilon}$.  This completes the proof of the lemma.
\end{proof}

Now we are ready to prove Theorem~\ref{thm:wedge_cutting}.

\begin{proof}[Proof of Theorem~\ref{thm:wedge_cutting}]
We will be brief since the proof is essentially the same as that of \cite[Theorem~6.1]{dms2014mating}.  Fix $u>0$ large and let $Y$ be a Bessel process of dimension $\delta(\kappa,\rho)$ as in~\eqref{eq:bessel_dimension}.  As in the proof of \cite[Theorem~6.1]{dms2014mating},  we can use the previous results to obtain an asymptotic coupling where the excursions of $Y$ away from $0$ up until some time $s_u$ each correspond to a connected component of $\h \setminus \wt{\eta}^{\wt{T}_u}$ which is to the left of $X_u$ and this correspondence is bijective,  where $X_u$ is the last intersection point of $\wt{\eta}^{\wt{T}_u}$ with $\R_+$ before time $\wt{T}_u$.  Note also that $Y$ encodes these bubbles as quantum surfaces from right to left.  Moreover,  by the construction of the coupling,  we have that $\wt{U}_t = V_{T_u-t}^{T_u}-W_{T_u-t}^{T_u}$ for all $t \in [0,T_u]$,  where $T_u$ is the right continuous inverse of the local time at $0$ of $V^{T_u} - W^{T_u}$ at $u$.  Hence $T_u = \wt{T}_u$ a.s.\  and $\wt{h}^{\wt{T}_u}$ is independent of $\wt{\eta}^{\wt{T}_u}$ with the law of the restriction of $\wt{h}^{\wt{T}_u}$ to the bubbles of $\h \setminus \wt{\eta}^{\wt{T}_u}([0,\wt{T}_u])$ ordered from right to left to be determined by the law of $Y$.  This completes the proof of the theorem.
\end{proof}

\subsection{Zipping according to quantum natural time}
\label{subsec:natural_time}

Suppose that we have the setup of Theorem~\ref{thm:wedge_cutting}.  Then Theorem~\ref{thm:wedge_cutting} implies that if $u>0$ is sufficiently large but fixed,  we have that the quantum surfaces which are cut out by $\eta|_{[0,T_u]}$ from $\infty$ and they are to the right of $\eta$ have the same Poissonian structure as those arising from a weight $\rho + 2$ quantum wedge.  Let $Y$ be the Bessel process encoding the law of the above bubbles when viewed as quantum surfaces modulo coordinate change.  Recall that $Y$ induces a quantum measure $m$ on the union of $\eta([0,T_u]) \cap \R_+$ with the set of self-intersection points of $\eta|_{[0,T_u]}$.  In Lemma~\ref{lem:main_lemma},  we will identify the conditional law of the pair $(h,\eta)$ when we sample $w$ from $m$ and condition on it,  and then show that when we zoom in near $w$ as in \cite[Proposition~4.7]{dms2014mating},  we have that the beaded quantum surface which consists of the connected components of $\h \setminus \eta$ lying to the right of $w$ converges to that of a quantum wedge of weight $\rho + 2$.  However,  when we condition on $w$,  we add to the field an extra log singularity at $w$ and the law of the curve is weighted by a certain Radon-Nikodym derivative.  This leads to the weight $\rho + 4$ that we eventually obtain when we zoom in near $w$.  Combining,  we obtain part (i) of Theorem~\ref{thm:quantum_natural_time_cutting}.  Finally,  part (ii) of Theorem~\ref{thm:quantum_natural_time_cutting} will follow by observing that the total variation distance between the laws of the pairs of the fields and curves tends to $0$ as $u \to \infty$ when we zoom in near $w$ and when we zoom in near the point obtained by shifting $w$ according to a fixed amount of quantum measure.

As we mentioned in the previous paragraph,  we will have to deal with fields perturbed by adding certain smooth functions.  Therefore,  it will be useful to know how the quantum measure described above changes when we add such a function.  This is the content of the next lemma.

\begin{lemma}[$\text{\cite[Lemma~6.19]{dms2014mating}}$]
\label{lem:smooth_perturbation}
Assume that we have the same setup as in Theorem~\ref{thm:wedge_cutting}(with the additive constant for $h$ fixed in the same way) and let $L$  be the local time at $0$ associated with the Bessel process $Y$ which encodes the weight $\rho + 2$ wedge corresponding to the bounded components of $\h \setminus \eta([0,T_{u}])$. For each smooth function $\phi$, let $Y^{\phi}$ be the corresponding process with $h$ replaced by $h + \phi$ (which we assume to be parameterized according to its quadratic variation). Let $\overline{\nu}$ (resp.\ $\overline{\nu}^{\phi}$) denote the empirical measure of the excursions made by $Y$ (resp.\ $Y^{\phi}$) from $0$ which correspond to the bubbles cut off $\infty$ by $\eta([T_{r},T_{u}])$. Let $d$ be the Bessel dimension of $Y$. We a.s.\ have (off a common set of measure $0$) for all $r \in [0,u]$ and all such smooth functions $\phi$ that the limit $m^{\phi}([r,u])$ of $\frac{\overline{\nu}^{\phi}(\CE(\epsilon))}{\nu_{d}^{\BES}(\CE(\epsilon))}$ as $\epsilon \to 0$ exists and (with $m = m^0$)
\begin{align*}
m^{\phi}([r,u]) = \int_{r}^{u}\exp{ \left(\frac{\gamma(2 - d)}{2}\phi(\eta(T_{v}))\right)dm(v)}.
\end{align*}
\end{lemma}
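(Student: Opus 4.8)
The plan is to follow the strategy of \cite[Lemma~6.19]{dms2014mating}, with Theorem~\ref{thm:wedge_cutting} playing the role of \cite[Theorem~6.1]{dms2014mating}; I will indicate the points at which the present setup enters. Two ingredients drive the computation. First, by Theorem~\ref{thm:wedge_cutting} the quantum surfaces cut off from $\infty$ by $\eta|_{[0,T_u]}$ and lying to its right are, up to a time tending to $\infty$ in probability, the beads of a weight $\rho+2$ quantum wedge, hence are encoded by the excursions away from $0$ of the Bessel process $Y$ of dimension $d = \delta(\kappa,\rho) \in (0,1)$; with the normalisation of the local time $L$ of $Y$ used above, these excursions form a Poisson point process with intensity $(\text{Lebesgue in local time}) \otimes \nu_d^{\BES}$. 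Second, $\nu_d^{\BES}$ is self-similar: if $\Theta_\lambda$ denotes Brownian rescaling of an excursion by $\lambda > 0$ (so that $\Theta_\lambda$ maps $\BES^d$ excursions to $\BES^d$ excursions), then $(\Theta_\lambda)_*\nu_d^{\BES} = \lambda^{2-d}\nu_d^{\BES}$; in particular $\nu_d^{\BES}(\Theta_{1/\lambda}(A)) = \lambda^{2-d}\nu_d^{\BES}(A)$ for every measurable set $A$ of excursions. Applying the strong law of large numbers for Poisson point processes to the first ingredient already yields the existence of $m = m^0$ as a locally finite a.s.\ weak limit on $[0,u]$ and the identity in the case $\phi \equiv 0$, exactly as in \cite{dms2014mating}.

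The heart of the argument is to control the effect of replacing $h$ by $h + \phi$ on the excursion encoding a single bubble $B$, in the regime where the quantum boundary length of $B$ tends to $0$. Writing $w_B := \eta(T_v)$ for the relevant marked (closing) point, one checks, as in \cite[Section~6.2]{dms2014mating}, that the projection onto $\CH_1(\strip)$ of the field describing $B$ in strip coordinates is shifted by the projection of $\phi$; since $\phi$ is smooth and the bubbles contributing to $\CE(\epsilon)$ as $\epsilon \to 0$ are localised near their marked point, this shift converges uniformly to the constant $\phi(w_B)$. A constant additive shift by $c$ of the $\CH_1(\strip)$-projection corresponds, after undoing the time change that defines the encoding excursion from the $\CH_1(\strip)$-projection, exactly to Brownian rescaling of the excursion by $\lambda = e^{\gamma c/2}$. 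Hence the excursion $e^\phi$ of $Y^\phi$ describing $B$ is, up to an error that is negligible as $B$ shrinks, equal to $\Theta_{\lambda_B}(e)$, where $e$ is the $Y$-excursion of $B$ and $\lambda_B = \exp(\tfrac{\gamma}{2}\phi(w_B))$.

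Combining the two ingredients, I would partition the relevant range of the local-time axis into finitely many intervals $I_i$ on each of which $v \mapsto \phi(\eta(T_v))$ is almost constant, say equal to $c_i$. On $I_i$, the bubbles with $e^\phi \in \CE(\epsilon)$ are (up to negligible errors) exactly those with $e \in \Theta_{1/\lambda_i}(\CE(\epsilon))$, where $\lambda_i = e^{\gamma c_i/2}$, and the Poisson law of large numbers together with the self-similarity of $\nu_d^{\BES}$ gives that their number is asymptotically $m(I_i)\,\nu_d^{\BES}(\Theta_{1/\lambda_i}(\CE(\epsilon))) = m(I_i)\,\lambda_i^{2-d}\,\nu_d^{\BES}(\CE(\epsilon))$. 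Summing over $i$, dividing by $\nu_d^{\BES}(\CE(\epsilon))$, and refining the partition while sending $\epsilon \to 0$ produces $\int_r^u \exp(\tfrac{\gamma(2-d)}{2}\phi(\eta(T_v)))\,dm(v)$, as in \cite[Lemma~6.19]{dms2014mating}. To obtain the conclusion off a single null set, simultaneously for all $r \in [0,u]$ and all smooth $\phi$, one runs this for each $\phi$ in a countable dense family, intersects the resulting null sets, and then extends to an arbitrary smooth $\phi$ using continuity of both sides in $\phi$ (in the local uniform topology), and to an arbitrary $r$ using that $m^\phi$ and the right-hand side are measures in $r$ and hence determined by a countable dense set of values of $r$.

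The main obstacle is the localisation step in the second paragraph: one must rule out a non-negligible contribution from bubbles that are small in quantum boundary length but not in Euclidean diameter, on which $\phi$ need not be nearly constant. As in \cite[Lemma~6.19]{dms2014mating} this is handled by noting that the complementary components of $\eta([0,T_u])$ form a Euclidean null sequence, so only finitely many of them have Euclidean diameter above any fixed threshold, while conditionally on the coarse field the remaining (Euclidean-small) bubbles have encoding excursions governed, up to errors vanishing as $\epsilon \to 0$, by the rescaling of the previous paragraph. The only genuinely new features relative to \cite{dms2014mating} are that here $d \in (0,1)$ rather than $d \in (1,2)$ — which is already built into the construction through Theorem~\ref{thm:reverse_coupling} and Theorem~\ref{thm:wedge_cutting} — and that $\eta$ is self-intersecting, so that $m$ is supported on $\eta([0,T_u]) \cap \partial \h$ together with the self-intersection points of $\eta$; neither affects the excursion-theoretic computation.
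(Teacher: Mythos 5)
Your proposal is correct and follows essentially the same route as the paper, which simply invokes the proof of \cite[Lemma~6.19]{dms2014mating}: you reconstruct that argument (Poissonian structure of the excursions from Theorem~\ref{thm:wedge_cutting}, the scaling relation $(\Theta_\lambda)_*\nu_d^{\BES}=\lambda^{2-d}\nu_d^{\BES}$ together with the fact that adding a locally nearly constant $\phi$ rescales each small excursion by $e^{\gamma\phi(w_B)/2}$, localization of all but finitely many bubbles, and a countable-family argument for simultaneity in $r$ and $\phi$) with the correct exponent $\tfrac{\gamma(2-d)}{2}$.
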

\begin{proof}
It follows from the proof of \cite[Lemma~6.19]{dms2014mating}.
\end{proof}

We note that Lemma~\ref{lem:smooth_perturbation} implies that the law of $Y^{\phi}$ is mutually absolutely continuous with respect to the law of $Y$ and moreover the local time of $Y^{\phi}$ at $0$ is defined for all smooth $\phi$ simultaneously.  Now,  we are ready to state and prove Lemma~\ref{lem:main_lemma}.

\begin{lemma}[$\text{\cite[Lemma~6.21]{dms2014mating}}$]
\label{lem:main_lemma}
Assume that we have the setup of Theorem~\ref{thm:wedge_cutting} (with the additive constant for $h$ fixed in the same way) and recall that 
\begin{align}
\label{eq:law_of_the _field}
    h = \wh{h} - \frac{2 + \rho}{\gamma}\log|\cdot|
\end{align}
where $\wh{h}$ is a free boundary $\text{GFF}$ on $\h$.
For each $0 \leq q < r \leq u$, we let $m_{q,r}$ be the restriction of $m$ as in Lemma~\ref{lem:smooth_perturbation} to subsets of $[q,r]$. Consider the law on $(w,h,\eta)$ triples given by $\CZ^{-1}_{q,r}dm_{q,r}dhd\eta$ where $dh$ denotes the law as in~\eqref{eq:law_of_the _field} and $\CZ_{q,r}^{-1}$ is a normalization constant. (Note that $m_{q,r}$ depends on $h$ and $\eta$.)
\begin{enumerate}[(i)]
\item Given $w$ and $\eta$, the conditional law of $h$ is equal to the law of $$\wh{h} - \frac{2 + \rho}{\gamma}\log|z| + \frac{\gamma(2 - d)}{2}G(z,\eta(T_w)) + \psi \circ f_{T_{u}}$$ where $\wh{h}$ is a free boundary $\text{GFF}$ on $\h$, $d$ is the dimension of the Bessel process $Y$, $\psi$ is a function which is harmonic outside of $\h \cap \partial{\D}$, and the additive constant is fixed in the same manner as for $h$.
\item Given $w$ and $\eta|_{[0,T_{w}]}$, the conditional law of $\eta|_{[T_{w},\infty)}$ is that of an $\SLE_{\kappa}(\rho)$ process in the unbounded component of $\h \setminus \eta([0,T_{w}])$ from $\eta(T_{w})$ to $\infty$ with a single boundary force point of weight $\rho$ located at $(\eta(T_{w}))^{+}$ weighted by the Radon-Nikodym derivative
\begin{align*}
\CZ^{-1}\exp\!\left(\frac{\gamma^{2}(2-d)^2}{8}\left(\int \int G(f_{T_u}^{-1}(x),f_{T_u}^{-1}(y))dxdy - 2 \int G(\eta(T_w),f_{T_u}^{-1}(x))dx\right)\right)
\end{align*}
where $G$ denotes the Neumann Green's function on $\h$ and $\CZ^{-1}$ is a normalizing constant and the integrals are all over $\partial \D \cap \partial \h$.
\item If one zooms in near $\eta(T_{w})$ as in part (ii) of \cite[Proposition 4.7]{dms2014mating}, then the law of the beaded surface which consists of the components of $\h \setminus \eta$ which are to the right of $\eta|_{[T_{w},\infty)}$ converges to that of a quantum wedge of weight $\rho + 2$.
\end{enumerate}
\end{lemma}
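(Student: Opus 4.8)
The plan is to adapt the argument of \cite[Lemma~6.21]{dms2014mating}, carrying out the Palm/Cameron--Martin bookkeeping of parts~(i) and~(ii) in the presence of the extra $-\frac{2+\rho}{\gamma}\log|\cdot|$ insertion and of the additive constant fixed through $f_{T_u}$, the genuinely new input being the behaviour of $\eta$ near its self-intersection points. For part~(i), disintegrate the reweighted measure $\CZ_{q,r}^{-1}\,dm_{q,r}\,dh\,d\eta$ first over $w$: conditionally on $(w,\eta)$ the law of $h$ is the plain law \eqref{eq:law_of_the _field} reweighted by the ``density of $dm_{q,r}$ at $w$'' relative to field perturbations. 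By Lemma~\ref{lem:smooth_perturbation}, replacing $h$ by $h+\phi$ multiplies $dm(v)$ by $\exp\!\big(\tfrac{\gamma(2-d)}{2}\phi(\eta(T_v))\big)$, which identifies this density with the regularized exponential $\exp\!\big(\tfrac{\gamma(2-d)}{2}h(\eta(T_w))\big)$. Since the free boundary GFF has covariance $G_\h^N$ (recall~\eqref{eqn:neumann_greens}), applying the Cameron--Martin theorem with $\phi=\tfrac{\gamma(2-d)}{2}G_\h^N(\cdot,\eta(T_w))$ shifts the GFF part of $h$ by $\tfrac{\gamma(2-d)}{2}G_\h^N(\cdot,\eta(T_w))$, the divergent deterministic factor being compensated by the $\epsilon$-powers in the definition of $m$; re-pinning the additive constant, which is fixed via the average of $h\circ f_{T_u}^{-1}+Q\log|(f_{T_u}^{-1})'|$ on $\h\cap\partial\D$, forces the correction $\psi\circ f_{T_u}$ with $\psi$ harmonic off $\h\cap\partial\D$, and the $-\tfrac{2+\rho}{\gamma}\log|\cdot|$ term is untouched.

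For part~(ii), integrate out the field. By~(i), conditionally on $(w,\eta)$ the field is a free boundary GFF plus a function deterministic given $(w,\eta)$, so the marginal law of $\eta$ given $w$---relative to the law of an $\SLE_\kappa(\rho)$ process from $\eta(T_w)$ to $\infty$ with force point at $(\eta(T_w))^+$, which is legitimate by the conformal Markov property and Lemma~\ref{lem:sle_separates_points} (guaranteeing $T_w<\infty$ and that $\eta$ separates $\eta(T_w)$ from $\infty$)---is reweighted by the Gaussian moment generating function of the linear functional of the GFF that enforces the additive-constant constraint. Completing the square gives $\CZ^{-1}$ times the exponential of a quadratic in $G_\h^N$: the double integral $\int\!\int G_\h^N(f_{T_u}^{-1}(x),f_{T_u}^{-1}(y))\,dx\,dy$ is proportional to the variance of that functional and $-2\int G_\h^N(\eta(T_w),f_{T_u}^{-1}(x))\,dx$ to its covariance with the shift $\tfrac{\gamma(2-d)}{2}G_\h^N(\cdot,\eta(T_w))$ of part~(i), the integrals being over $\partial\D\cap\partial\h$ because that is where the additive constant is pinned, and the prefactor $\tfrac{\gamma^2(2-d)^2}{8}$ is the square of the shift coefficient.

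For part~(iii), recall that by Theorem~\ref{thm:wedge_cutting} the bubbles to the right of $\eta|_{[0,T_u]}$ form (a truncation of) a weight $\rho+2$ quantum wedge encoded by the Bessel process $Y$ of dimension $d=\delta(\kappa,\rho)$ as in~\eqref{eq:bessel_dimension}, and $m$ is its local time measure on the union of $\eta([0,T_u])\cap\R_+$ with the self-intersection set of $\eta$. Sampling $w$ from $m$ and zooming in near $\eta(T_w)$ as in part~(ii) of \cite[Proposition~4.7]{dms2014mating} is exactly the operation of inspecting a weight $\rho+2$ thin wedge near a typical point of its local time measure; by the regenerative structure and scale invariance of $Y$ this recovers a weight $\rho+2$ wedge in the limit, so the beaded surface to the right of $\eta|_{[T_w,\infty)}$ converges to a weight $\rho+2$ wedge. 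One then checks that the remaining data from~(i) and~(ii) washes out: the $-\tfrac{2+\rho}{\gamma}\log|\cdot|$ insertion at $0$ and the term $\psi\circ f_{T_u}$ are harmonic near $\eta(T_w)$ and hence contribute only a perturbation vanishing modulo additive constant after rescaling; the insertion $\tfrac{\gamma(2-d)}{2}G_\h^N(\cdot,\eta(T_w))$ at $\eta(T_w)$ is precisely the one produced by the typical-point reweighting and is already incorporated in the self-similarity statement; and the Radon--Nikodym derivative of~(ii) tends to a constant since the Green's-function differences in it vanish at small scales. Since $\eta(T_w)$ is generically a double point rather than a boundary point, Theorem~\ref{thm:law_of_sle_excursion} is used to identify the conditional law of the excursion of $\eta$ completed at the relevant times as an $\SLE_\kappa(\rho+2;\kappa-4-\rho)$ process, which makes the local picture near $\eta(T_w)$ that of a generic bead of the weight $\rho+2$ wedge.

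The main obstacle is part~(iii). In \cite{dms2014mating} the curve is simple, so $\eta(T_w)\in\R_+$ and the bead structure is immediate, whereas here $\eta$ is self-intersecting and, as noted after Definition~\ref{def:quantum_natural_time}, traces the entire boundary of each bubble before creeping up it; consequently the bijection between the excursions of $Y$ and the bubbles cut off from $\infty$, and the assertion that an $m$-typical point behaves like a generic point of the wedge, both rest on the light cone machinery of \cite{ms2019lightcone} together with Theorem~\ref{thm:law_of_sle_excursion}. One must also control the correction $\psi\circ f_{T_u}$ and the Radon--Nikodym derivative uniformly as the zoom-in scale tends to $0$ and as $u\to\infty$, which is more delicate than in the simple-curve case because $f_{T_u}$ is the centered Loewner flow of a non-simple curve.
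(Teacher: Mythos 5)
Your sketches of parts~(i) and~(ii) are fine as far as they go: they reproduce the Girsanov/Palm computation of \cite[Lemma~6.21]{dms2014mating}, which is exactly what the paper does for those parts (it cites the DMS proof verbatim, adding only the remark that $\psi\circ f_{T_u}$ extends to $\h$ because $\eta$ has zero Lebesgue measure, by local absolute continuity with respect to $\SLE_\kappa$ away from the force point). The problem is part~(iii), which is where the real content of the lemma lies, and there your proposal asserts the conclusion rather than proving it. Saying that zooming in at an $m$-typical point ``recovers a weight $\rho+2$ wedge by the regenerative structure and scale invariance of $Y$'' and that ``the remaining data from~(i) and~(ii) washes out'' is circular: the statement that the conditioned, reweighted configuration looks, after zooming, like an \emph{unconditioned} weight $\rho+2$ wedge is precisely what must be established, and your justification of the wash-out is either unproven or incorrect as stated. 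In particular, $\psi\circ f_{T_u}$ is not obviously harmonic in a neighbourhood of $\eta(T_w)$ --- that point lies on the curve, where $f_{T_u}$ is not conformal and the composition is only an a.e.-defined extension --- and the Radon--Nikodym derivative in~(ii) is a fixed global reweighting of the curve law which does not depend on the zoom scale at all, so it does not ``tend to a constant because the Green's-function differences vanish at small scales''; what is actually needed is that it becomes asymptotically independent of the small-scale observables.

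The paper's mechanism, which is absent from your proposal, is the following. One extends the bubble surface to a bi-infinite weight $\rho+2$ wedge, samples $X$ uniformly on $[0,N]$ independently, and uses the invariance of the Poisson law of the bubble decoration under recentering by $X$ units of local time and time reversal to identify the \emph{unconditional} law of the bubbles seen in the zoomed window with that of a weight $\rho+2$ wedge. The law one actually cares about is this law conditioned on $X\in[0,M]$, whose Radon--Nikodym derivative with respect to the unconditional law, given the zoom-scale $\sigma$-algebra $\CF_{C,N}$, is $\p[X\in[0,M]\giv\CF_{C,N}]/\p[X\in[0,M]]$. The proof then reduces to showing that $\bigcap_{C>0}\CG_{N,C}$ is trivial, so that by backwards martingale convergence this ratio tends to $1$ as $C\to\infty$; the triviality is proved using the explicit conditional laws from parts~(i)--(ii), the Markov property of the GFF (the decomposition $\wh{h}=\wh{h}_C+\wh{\CH}_C$), the local uniform convergence $G_C\to G$ of the mixed-boundary Green's functions, the decay of the covariance of $\wh{\CH}_C$ via the Poisson kernel of $\h\setminus B(0,K\epsilon(C))$, and the conditional independence of $h_X$ and $\eta^X$ given $X$. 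Without this (or an equivalent decoupling argument), the ``corrections wash out'' step in your proposal is a genuine gap; the invocation of Theorem~\ref{thm:law_of_sle_excursion} at that point does not substitute for it --- in the paper that theorem enters earlier, in the proof of the analogue of \cite[Lemma~6.6]{dms2014mating} used for Theorem~\ref{thm:wedge_cutting}, not in the proof of part~(iii) of this lemma.
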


\begin{proof}
The proofs of parts (i) and (ii) follow from the proofs of parts (i) and (ii) respectively of \cite[Lemma~6.21]{dms2014mating}.  We note that the harmonic function $\psi \circ f_{T_u}$ obtained in part (i) can be defined on $\h$ since the Lebesgue measure of $\eta \cap \h$ is equal to $0$ a.s.  The latter follows since the law of $\eta$ on the intervals that it is not colliding with its force point is mutually absolutely continuous with respect to the law of an $\SLE_{\kappa}$ restricted to the corresponding intervals. So the above harmonic function extends to a function defined on $\h$.
 
Now we turn to the proof of part (iii). First of all, we recall that the beaded surface consisting of the bubbles parameterized by the components of $\h \setminus \eta([0,T_r])$ from right to left is given by that of a quantum wedge $\CW$ of weight $\rho + 2$, up to a given amount of local time $M$, which is a random time.  Without loss of generality, we can assume that $\CW$ is a bi-infinite wedge (which is just a concatenation of two independent weight $\rho + 2$-wedges). More precisely,  let $\CW_1,\CW_2$ be two independent weight $\rho + 2$-wedges with corresponding encoding Bessel processes $(Y_{t}^1)_{t \geq 0}$ and $(Y_{t}^2)_{t \geq 0}$ respectively. Then, their concatenation is the beaded surface $\CW$ encoded by the process $Y = (Y_t)_{t \in \R}$ defined by $Y_t = Y_{t}^1$ for $t \geq 0$ and $Y_t = Y_{-t}^2$ for $t \leq 0$.   For $\epsilon \in (0,1)$ we consider the conformal transformation $\varphi_{\epsilon} \colon \h \to \h$ with $\varphi_{\epsilon}(z) = z/\epsilon$ and the $\text{GFF}$ on $\h$, 
\begin{align*}
h^{\epsilon,C} = h \circ f_{T_w}^{-1} \circ \varphi_{\epsilon}^{-1} + Q\log  |(f_{T_w}^{-1})' \circ \varphi_{\epsilon}^{-1}| + Q\log (\epsilon) + \frac{C}{\gamma}
\end{align*}
and set $\epsilon(C) = \sup\{\epsilon \in [0,1] : h_{1}^{\epsilon,C}(0) = 0\}$ and $h^{C} = h^{\epsilon(C),C}$. Here, $h_{1}^{\epsilon,C}(0)$ is the average of the field $h^{\epsilon,C}$ on $\h \cap \partial  \D$. We want to show that the beaded surface with respect to $h^{C}$ which consists of the components of $\h \setminus \varphi_{\epsilon}(\eta^w)$ which are to the right of $\varphi_{\epsilon}(\eta^w)$ converges to that of a quantum wedge of weight $\rho + 2$ as $C \to \infty$, where $\eta^w(t) = f_{T_w}(\eta(t+T_w))$ for $t \geq 0$.

Next, we fix a large number $N$ and we assume that $X \in [0,N]$ is sampled uniformly at random. If we take $\CW$ and recenter it after shifting by $X$ units of local time, i.e., by considering the wedge encoded by the process $Y^T = (Y_{t}^T)_{t \in \R}$ with $Y_{t}^{T} = Y_{T - t}$ for $t \in \R$ and $T = T_X$, then the bubbles of $\CW$ going from left to right have the law of the bubbles in a weight $\rho + 2$-wedge because the law of the bubbles is invariant under the time-reversal and recentering (simply because the Poisson law has this property). Therefore the law of the beaded surface with respect to $h^C$ which consists of the bubbles of $\h \setminus \varphi_{\epsilon}(\eta^w)$ lying on $B(0,\epsilon(C)^{-1})$ can be sampled as follows. Let $\wt{\CW}$ be a bi-infinite wedge of weight as above coupled with $\eta$ such that the bubbles to the right of $\eta$ (starting from $\eta(T_r)$ and going from right to left) agree with the bubbles of the wedge encoded by $(\wt Y_t)_{t \geq 0}$, where $\wt  Y $ is the encoding process for $\wt{\CW}$, and up until $M$ units of local time. We fix $N \in \N$ large and we pick $X \in [0,N]$ uniformly at random. Then we condition on the event that $X \in [0,M]$ and we consider the beaded surface which is part of $\wt{\CW}$ and it is parameterized by the bubbles which are images under $f_{T_X}^{-1} \circ \varphi_{\epsilon}^{-1}$ of the bubbles of $\h \setminus \varphi_{\epsilon}(\eta^X)$ lying on the right of $\varphi_{\epsilon}(\eta^X)$. Then by letting $N \to \infty$, we have that the law of the above surface converges to the law of the surface parameterized by the bubbles of $\h \setminus \varphi_{\epsilon}(\eta^X)$ lying to the right of $\varphi_{\epsilon}(\eta^X)$.

Fix $K>0$ and let $\CF_{C,N}$ be the $\sigma$-algebra generated by the bubbles of the previous paragraph. Then the law of these bubbles when conditioning on $\CF_{C,N}$ has Radon-Nikodym derivative with respect to the law without conditioning equal to 
\begin{align*}
\frac{\p[ X \in [0,M]\,|\,\CF_{C,N}]}{\p[ X \in [0,M] ]}.
\end{align*}
Suppose that we have shown that the $\sigma$-algebra $\CF_{C,N}$ becomes trivial when $C \to \infty$, for every fixed $N \in \N$. Then, by the backwards martingale convergence theorem, we have that the Radon-Nikodym derivative converges to $1$ as $C \to \infty$ a.s. Also, the unconditional law of the bubbles is the same as if we replaced $h^C$ by a wedge of weight $\rho + 2$. By letting $N \to \infty$ we obtain the claim of part (iii).

Hence, in order to complete the proof of part (iii), we need to show that $\cap_{C>0} \CF_{C,N}$ is trivial for all $N \in \N$.  Let $\CF_{C,N}^1$ (resp.\ $\CF_{C,N}^2$) be the $\sigma$-algebra generated by the restriction of $h_X = h \circ f_{T_X}^{-1} + Q \log |(f_{T_X}^{-1})'|$ to $\h \cap B(0,K \epsilon(C))$ (resp.\ the bubbles cut off $\infty$ by $\eta^X$ which lie on $B(0,K \epsilon(C))$).  Set $\CG_{N,C} = \sigma(\CF_{C,N}^1,\CF_{C,N}^2)$ and $\CG_{N} = \cap_{C>0}\CG_{N,C}$. Then it suffices to show that $\CG_N$ is trivial for every fixed $K$.  For the latter, it suffices to prove that the law of the pair $((h_X,\phi),\eta^X)$ given $\CG_{C,N}$ converges as $C \to \infty$ to the unconditional law of $((h_X, \phi),\eta^X)$, for every fixed $\phi \in C_{0}^{\infty}(\h)$ with $\int_{\h}\phi(z)dz = 0$.  To show this,  first we note that the conditional law of the pair consisting of $h_X$ and $\eta^X$ given $X$ is equal to that of an $\SLE_{\kappa}(\rho)$ process $\eta$ in $\h$ with a single force point at $0_+$ weighted by the Radon-Nikodym derivative of part (ii) and $h$ can be expressed as $\wh{h} + \left(\frac{\rho +2 -\gamma^2}{\gamma}\right) \log |\cdot| + \psi \circ f_{T_{N-X}}$,  where $\wh{h}$ is a free boundary $\text{GFF}$ and the additive constant is taken so that the average of the field after applying the coordinate change with $f_{T_{N-X}}$ on $\h \cap \partial \D$ is equal to $0$.  This form of the conditional law follows by combining part (ii) with the proof of \cite[Theorem~6.16]{dms2014mating}.

Set $\wh{\psi} = \left( \frac{\rho + 2 - \gamma^2}{\gamma}\right) \log |\cdot| + \psi \circ f_{T_{N-X}}$ and note that conditional on $X$,  $\wh{\psi}$ is determined by $\eta^X$.  Note also that the Markov property of the $\text{GFF}$ implies that $\wh{h}$ can be decomposed as $\wh{h} = \wh{h}_C +\wh{ \mathcal{H}}_C$,  where $\wh{h}_C$ is a $\text{GFF}$ on $\h \setminus B(0,K \epsilon(C))$ with free boundary conditions on $\h \cap \partial B(0,K\epsilon(C))$ and $\wh{\mathcal{H}}_C$ is a harmonic function on $\h$ with boundary conditions given by those of $\wh{h}$ on $\h \cap \partial B(0,K \epsilon(C))$ and free boundary conditions on $\partial \h \setminus B(0,K\epsilon(C))$. Hence the conditional law of $(h_X,\phi)$ given $X$ and $\eta^X$ is that of a Gaussian random variable with mean $(\wh{\mathcal{H}}_C,\phi)$ and variance $\int_{\h}\int_{\h} \phi(y)G_C(y,z)\phi(z)dydz$,  where $G_C$ is the Green's function on $\h \setminus B(0,K \epsilon(C))$ with Dirichlet (resp.\ Neumann) boundary conditions on $\h \cap \partial B(0,K \epsilon(C))$ (resp.\ $\partial \h \setminus B(0,K \epsilon(C)))$.  Note that
\begin{equation}\label{eq:green_convergence}
\int_{\h}\int_{\h} \phi(y) G_C(y,z) \phi(z) dydz \to \int_{\h}\int_{\h} \phi(y) G(y,z) \phi(z) dydz \,\,\,\text{as}\,\,\,C \to \infty,
\end{equation}
since $G_C \to G$ as $C \to \infty$ locally uniformly.  Also,  given $X$,  $\eta^X$ and $\CG_{C,N}$,  the random variable $(\wh{\mathcal{H}}_C,\phi)$ is a Gaussian with mean zero and covariance given by $\int_{\h}\int_{\h} \phi(y) \cov(\wh{\mathcal{H}}_C(y),\wh{\mathcal{H}}_C(z)) \phi(z) dydz$.  Then,  using the explicit form of the Poisson kernel in $\h \setminus B(0,\epsilon)$,  we have that the latter covariance tends to $0$ as $C \to \infty$.  Therefore,  we obtain that the conditional law of $(h_X,\phi)$ given $(X,\eta^X)$ and $\mathcal{G}_{C,N}$ converges to the conditional law of $(h_X,\phi)$ given $(X,\eta^X)$ as $C \to \infty$.  Moreover,  $X$ is independent of $\mathcal{G}_{C,N}$ and the conditional law of $\eta^X$ given $X$ and $\mathcal{G}_{C,N}$ converges to the conditional law of $\eta^X$ given $X$ as $C \to \infty$.  The claim then follows since $(h_X,\phi)$ and $\eta^X$ are independent given $X$.  This completes the proof of part (iii).
\end{proof}

\begin{proof}[Proof of Theorem~\ref{thm:quantum_natural_time_cutting}.]
Now that we have proved Theorem~\ref{thm:wedge_cutting} and Lemma~\ref{lem:main_lemma},  the proof follows the same argument as that of the proof of \cite[Theorem~6.16]{dms2014mating}.
\end{proof}

\subsection{Quantum boundary length evolution}
\label{subsec:boundary_length_evolution}

Suppose that we have the setup of Theorem~\ref{thm:quantum_natural_time_cutting}.  In this subsection we are going to describe the law of the boundary length evolution of $\eta$ when it has the quantum natural time parameterization as in Theorem~\ref{thm:quantum_natural_time_cutting},  and hence proving Theorem~\ref{thm:boundary_length_evolution}.  As an intermediate step in the proof of Theorem~\ref{thm:boundary_length_evolution},  we are going to describe the Poissonian structure of the quantum lengths of the beads of a surface corresponding to a quantum wedge of weight $\rho + 2$ with $\kappa \in (0,4)$ and $\rho \in (\kappa/2-4,-2) \cap (-2-\kappa/2,-2)$.  Finally,  we are going to prove Corollary~\ref{cor:dim_of_boundary_intersection} as a consequence of Theorem~\ref{thm:boundary_length_evolution}.

We start by analyzing the Poissonian structure of the quantum lengths of the beads of a weight $\rho + 2$ quantum wedge.  Our strategy will be similar to that used to prove \cite[Proposition~4.18]{dms2014mating}.  In particular,  we prove the following.

\begin{theorem}
\label{thm:law_of_quantum_lengths}
Fix $\kappa$ and $\rho$ as above.  Let $\CW$ be a quantum wedge of weight $\rho + 2$ and let $h$ be the corresponding field. Let $Y$ be the Bessel process encoding $\CW$ and $(e_{j})_{j \in \N}$ be its excursions away from zero. Let $(h_{j})_{j \in \N}$ be the fields encoding the surfaces determined by the above excursions. Let also $t_{j}$ be the boundary quantum length associated with $h_{j}$ for all $j \in \N$. Then $(t_{j})$ has the law of a $\text{p.p.p.}$ with intensity measure given by $c( du \times t^{\alpha}dt)$ where $\alpha = -2 + \frac{2(\rho + 2)}{\kappa}$ and $c > 0$ is a constant depending only on $\alpha$ and $\rho$.
\end{theorem}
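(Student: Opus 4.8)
The plan is to combine Itô excursion theory for Bessel processes with the Brownian scaling of the quantum surfaces, following the strategy of \cite[Proposition~4.18]{dms2014mating} (which treats the range $\rho+2 \in (0,\gamma^2/2)$; the argument is insensitive to the fact that here $\delta = \delta(\kappa,\rho) \in (0,1)$). First I would record the Poissonian input. By Definition~\ref{def:thin_quantum_wedge}, the beads of $\CW$ are indexed by the excursions of the $\BES^{\delta}$ process $Y$ away from $0$, and by Itô excursion theory these form a Poisson point process on $\R_+ \times \CE$ (with $\CE$ the space of excursions) of intensity $du \times n(de)$, where $u$ is the local time at $0$ of $Y$ and $n$ is the corresponding Itô excursion measure; on the excursion $e$, the field $h_e$ has $\CH_1(\strip)$-projection given by reparameterizing $2\gamma^{-1}\log e$ to have quadratic variation $2\,dt$ and independent $\CH_2(\strip)$-projection given by the relevant projection of a free boundary GFF. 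Writing $t(e)$ for the quantum boundary length of the bead attached to $e$ --- which depends on $e$ together with the auxiliary free boundary GFF --- the marking/mapping theorem for Poisson point processes shows that the collection $(t_j)$, indexed by the local time $u$ at which the corresponding excursion occurs, is a Poisson point process on $\R_+ \times \R_+$ with intensity $du \times \wt{n}(dt)$, where
\begin{equation*}
\wt{n}(dt) = \int_{\CE} \p[\,t(e) \in dt\,]\, n(de)
\end{equation*}
and $\p[\,t(e) \in \cdot\,]$ denotes the conditional law of $t(e)$ given $e$. It therefore suffices to show that $\wt{n}(dt) = c\, t^{\alpha}\,dt$ with $\alpha = -2 + 2(\rho+2)/\kappa$ and $c = c(\kappa,\rho) \in (0,\infty)$.

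Next I would pin down $\wt{n}$ by scaling. For $\lambda > 0$ let $\Phi_\lambda$ be the Brownian scaling map on excursions, $\Phi_\lambda(e)_s = \lambda\, e_{s/\lambda^2}$. Two scaling identities are needed. First, since $Y$ is $\BES^{\delta}$ it is invariant under Brownian scaling, and with the canonical normalization of local time (so that the excursion process has intensity $du \times n$) one has $(\Phi_\lambda)_* n = \lambda^{2-\delta}\, n$; this follows from Itô excursion theory for Bessel processes (equivalently, from the fact that under $n$ the excursion duration has ``density'' proportional to $s^{\delta/2-2}\,ds$, which scales as claimed, together with the $\lambda^{2-\delta}$ scaling of the local time of $\BES^{\delta}$ at $0$). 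Second, for $n$-a.e.\ $e$ the random variable $t(\Phi_\lambda(e))$ has the law of $\lambda\, t(e)$: indeed $2\gamma^{-1}\log \Phi_\lambda(e) = 2\gamma^{-1}\log\lambda + 2\gamma^{-1}\log e \circ (\cdot/\lambda^2)$, and a time change does not affect the total quadratic variation, so the $\CH_1(\strip)$-projection of the bead of $\Phi_\lambda(e)$ equals that of the bead of $e$ shifted by the additive constant $C = 2\gamma^{-1}\log\lambda$ while the $\CH_2(\strip)$-part is resampled; hence the bead of $\Phi_\lambda(e)$ has the law of the bead of $e$ with its field translated by $C$, and translating a field by $C$ multiplies the quantum boundary length measure by $e^{\gamma C/2} = \lambda$. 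Substituting $e' = \Phi_{\lambda^{-1}}(e)$ in the integral defining $\wt{n}$ and using these two identities gives $\wt{n}(\lambda\, A) = \lambda^{\delta-2}\,\wt{n}(A)$ for every Borel $A \subseteq \R_+$ and every $\lambda > 0$. A $\sigma$-finite measure on $\R_+$ with this homogeneity is necessarily of the form $c\, t^{\delta-3}\,dt$, and $c \in (0,\infty)$ because the weight $\rho+2$ wedge is a well-defined, non-degenerate quantum surface, so per unit local time it has a.s.\ finitely many but at least one bead with quantum boundary length in, say, $[1,2]$, forcing $0 < \wt{n}([1,2]) < \infty$. Since $\delta - 3 = \bigl(1 + 2(\rho+2)/\kappa\bigr) - 3 = -2 + 2(\rho+2)/\kappa = \alpha$, this completes the proof.

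The main obstacle is the bookkeeping around the two scaling exponents: one must fix the normalizations of the local time of $Y$ and of the Itô excursion measure $n$ so that they are mutually consistent (the excursion point process having intensity exactly $du \times n$), and then verify that under Brownian scaling $n$ picks up the factor $\lambda^{2-\delta}$ while the quantum boundary length picks up the factor $\lambda$; a sign error in either exponent propagates directly into a wrong power $\alpha$. A secondary point, which I would treat by reference to the construction, is the non-degeneracy $0 < \wt{n}([1,2]) < \infty$, which relies on the weight $\rho+2$ wedge --- in the extended range $W \in (-\gamma^2/2,0)$ of Definition~\ref{def:thin_quantum_wedge} --- being a bona fide quantum surface with the expected bead decomposition, together with the finiteness and positivity of the quantum boundary length of a typical bead established as in the discussion of Definition~\ref{def:thin_quantum_wedge}.
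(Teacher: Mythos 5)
Your overall route---It\^o excursion theory plus the marking/mapping theorem to reduce everything to the intensity $\wt{n}(dt)=\int_\CE \p[t(e)\in dt]\,n(de)$, then exact scaling to force $\wt{n}(dt)=c\,t^{\delta-3}dt$---is the same idea as the paper's proof, and your two scaling identities are correct: $(\Phi_\lambda)_*n=\lambda^{2-\delta}n$ with the canonical local-time normalization, and $t(\Phi_\lambda(e))\stackrel{d}{=}\lambda\,t(e)$ because scaling the excursion amounts to adding $C=2\gamma^{-1}\log\lambda$ to the bead field, which multiplies boundary length by $e^{\gamma C/2}=\lambda$. The paper implements the same mechanism slightly differently, marking each excursion by its maximum $e^*$ (intensity $c_\delta^*(e^*)^{\delta-3}de^*$), writing the bead boundary length as $e^*U_e$ with $(U_e)$ i.i.d., and concluding via \cite[Lemmas~4.19 and~4.20]{dms2014mating}.

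The genuine gap is the point you dismiss as secondary: finiteness of $c$, i.e.\ $\wt{n}([1,2])<\infty$, equivalently that per unit local time only finitely many beads have boundary length in $[1,2]$. This does not follow from the weight $\rho+2$ wedge being ``a bona fide quantum surface'': Definition~\ref{def:thin_quantum_wedge} defines the wedge for every $W\in(-\gamma^2/2,\gamma^2/2)$ with no use of the hypothesis $\rho>\kappa/2-4$, and the remark after it stresses that in the present regime $W\in(-\gamma^2/2,0]$ the bead boundary lengths are not even locally summable, so finiteness statements of this kind are exactly what is delicate here. In the multiplicative decomposition one has $\wt{n}([1,\infty))=\tfrac{c_\delta^*}{2-\delta}\,\E[U^{2-\delta}]$, where $U$ is the boundary length of a bead whose excursion has maximum $1$; thus local finiteness of $\wt{n}$ away from $0$ is precisely the moment bound $\E[U^{2-\delta}]<\infty$, and since $2-\delta=1-2(\rho+2)/\kappa$ this holds exactly because $\rho>\kappa/2-4$ forces $2-\delta<4/\kappa=4/\gamma^2$ (the paper obtains it from \cite[Lemma~4.20]{dms2014mating}); for $\rho\le\kappa/2-4$ the wedge still exists but this quantity is infinite. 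Without that input your homogeneity argument does not close either: if $\wt{n}([1,2])=\infty$ then by scaling $\wt{n}$ is infinite on every interval of $(0,\infty)$ and is not $\sigma$-finite, so the step ``a $\sigma$-finite measure with this homogeneity is $c\,t^{\delta-3}dt$'' presupposes the finiteness you need to prove. You must supply the moment estimate (e.g.\ via the decomposition $t_j=e_j^*U_j$ and \cite[Lemma~4.20]{dms2014mating}); the positivity of $c$ is the easy half.
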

\begin{proof}
Note that we can sample $Y$ by first sampling a $\text{p.p.p.}$  $\Lambda^*$ according to $du \times \nu_{\delta}^*(dt)$ with $\nu_{\delta}^*(dt) = c_{\delta}^* t^{\delta - 3}dt$ for some constant $c_{\delta}^*>0$  depending only on $\delta$ and then sampling $Y$ by associating with each $(u,e*) \in \Lambda^*$ a $\BES^{\delta}$ excursion away from zero whose maximum value is given by $e^*$.  Here $\delta$ satisfies~\eqref{eq:bessel_dimension}. Note that for fixed $e^*$ the law of the latter can be sampled by joining back to back two independent Bessel processes with dimension $\wt{\delta} = 4 - \delta$ starting from $0$ and up until the first time they hit $e^*$ (see \cite[Remark~3.7]{dms2014mating}).  Let $\mu_{\delta}^{x}$ be the law on such paths for $x = e^* \in (0,+\infty)$.

Suppose that $X = (X_{t})_{0 \leq t \leq T}$ is sampled from $\mu_{\delta}^c$, where $T$ is the length of the excursion. Since Bessel processes satisfy Brownian scaling, we obtain that the process $\wt{X} = (c^{-1}X_{c^{2}t})_{0 \leq t \leq \wt{T}}$ with $\wt{T} = c^{-2}T$ is sampled from $\mu_{\delta}^{1}$. For $\epsilon \in (0,1)$ we set $T_{\epsilon} = \inf \{t \geq 0 : X_{t} \geq \epsilon \} $, $\wt{T}_{\epsilon} = \inf \{t \geq 0 : \wt{X}_{t} \geq \epsilon \}$ and 
\begin{align*}
        \sigma^{\epsilon}(t) = \inf \left\{s \geq 0 : \frac{4}{\gamma^{2}}\int_{0}^{s}\frac{1}{(X^{\epsilon}_{r})^{2}}dr \geq 2t \right \} \\
        \wt{\sigma}^{\epsilon}(t) = \inf \left\{s \geq 0 : \frac{4}{\gamma^{2}}\int_{0}^{s}\frac{1}{(\wt{X}^{\epsilon}_{r})^{2}}dr \geq 2t \right \},
\end{align*}
where $X^{\epsilon}_{t} = X_{T_{\epsilon} + t}$ for $0 \leq t \leq  T^{\epsilon}$ and $\wt{X}^{\epsilon}_{t} = \wt{X}_{\wt{T}_{\epsilon} + t }$ for $0 \leq t \leq \wt{T}^{\epsilon}$,  and $T^{\epsilon} = T - T_{\epsilon}$,  $\wt{T}^{\epsilon} = \wt{T} - \wt{T}_{\epsilon}$.  It follows from \cite[Proposition~3.4]{dms2014mating} that $Z^{\epsilon}$ (resp.\ $\wt{Z}^{\epsilon}$) evolves as a Brownian motion starting from $\frac{2}{\gamma}\log(\epsilon)$ and run twice the speed with drift $\frac{(2-\delta)}{2}\gamma$ up until the first time it hits $\frac{2}{\gamma}\log(c)$ (resp.\ $0$) and then it evolves independently as a Brownian motion starting from $\frac{2}{\gamma}\log(c)$ (resp.\ $0$) and run twice the speed with drift $\frac{ (\delta - 2)}{2}\gamma$, where $Z^{\epsilon} = \frac{2}{\gamma}\log(X_{\sigma^{\epsilon}(t)}^{\epsilon})$ and $\wt{Z}_{t}^{\epsilon} = \frac{2}{\gamma}\log(\wt{X}_{\wt{\sigma}^{\epsilon}(t)}^{\epsilon})$ for $t \geq 0$.  Also, we have that $\wt{T}_{\epsilon} = c^{-2}T_{\epsilon c}$ and hence $\wt{X}_{t}^{\epsilon} = c^{-1} X_{c^{2}t}^{\epsilon c}$ and $\wt{\sigma}^{\epsilon}(t) = c^{-2}\sigma^{\epsilon c}(t)$ for all $t \geq 0$. Moreover if $\sigma^{\epsilon} = \inf \{t \geq 0 : Z_{t}^{\epsilon} \geq \frac{2}{\gamma} \log(c) \}$ (resp.\ $\wt{\sigma}^{\epsilon} = \inf\{t \geq 0 : \wt{Z}_{t}^{\epsilon} \geq 0 \}$), then $Z_{\sigma^{\epsilon}+ t}^{\epsilon}$ for $t \geq - \sigma^{\epsilon}$ (resp.\ $\wt{Z}_{\wt{\sigma}^{\epsilon} + t}$ for $t \geq - \wt{\sigma}^{\epsilon}$) converges weakly as $\epsilon \to 0$ with respect to the local uniform topology to a process $Z$ (resp.\ $\wt{Z}$) indexed by $\R$. Note also that $\wt{Z}_t = Z_t - \frac{2}{\gamma}\log(c)$ for all $t \in \R$. 

The above observations imply that the total quantum length of a surface associated with a given $(u,e^*)$ is given by $e^*U_{e}$ where the $U_{e}$ are i.i.d. random variables indexed by the excursions of $Y$ away from $0$. Also the law of $U_{e}$ is given by the total quantum length of the random surface $\wt{h}$ sampled as follows:
\begin{enumerate}[(i)]
\item Let $X$ be a sample from $\mu_{\delta}^{1}$. The projection of $\wt{h}$ onto $\CH_{1}(\strip)$ is given by parameterizing $\frac{2}{\gamma}\log(X)$ to have quadratic variation $2dt$.
\item The projection of $\wt{h}$ onto $\CH_{2}(\strip)$ is given by taking an independent sample of the law of the corresponding projection onto $\CH_{2}(\strip)$ of a free boundary $\text{GFF}$ on $\strip$.
\end{enumerate}

Note that $ \alpha = \delta - 3$, where $\alpha$ is as in the statement of the theorem, and if $U$ is sampled from the law of $U_{e}$ then $\E[U^{-\alpha - 1}] = c < \infty$ since $-\alpha -1 = 1 - \frac{2(\rho + 2)}{\kappa} \in (0,\frac{4}{\kappa})$ and so \cite[Lemma~4.20]{dms2014mating} applies. Therefore $\{(u,e^*U_{e}) : (u,e^*) \in \Lambda^* \}$ is a $\text{p.p.p.}$ with intensity measure given by $c(du \times t^{\alpha}dt)$ by applying \cite[Lemma~4.19]{dms2014mating}. We also observe that the above $\text{p.p.p.}$ corresponds to the sequence of jumps of a positive stable subordinator with parameter $-\alpha -1 \in (1,2)$. This completes the proof.
\end{proof}
Now suppose that we have the same setup as in Theorem~\ref{thm:quantum_natural_time_cutting}. We note that the quantum natural time $(\qnt_{u})_{u \geq 0}$ can be expressed as follows. Let $(x_{j},t_{j})_{j \in \N}$ be the $\text{p.p.p.}$ encoding the local time at $0$ $L$ of $Y$ and let $(e_{j})_{j \in \N}$ be the corresponding excursions of $Y$ away from zero.  For all $t \geq 0$ we define $A(t)$ to be the set of $j \in \N$ such that the bubble corresponding to $e_{j}$ has been drawn completely by $\eta$ by capacity time $t$ and $x(t) = \sum_{j \in A(t)}t_{j}$. Then we set 
\begin{align*}
\qnt_{u} = \inf \{t \geq 0 : L_{x(t)} > u \}
\end{align*}
for all $u \geq 0$. 

Now we are ready to prove Theorem~\ref{thm:boundary_length_evolution}.  Recall the definitions of $A_u,  B_u,  X_u$ and $Z_u$ in Section~\ref{sec:introduction}.

\begin{proof}[Proof of Theorem~\ref{thm:boundary_length_evolution}.]
First,  we prove that $(X_u,Z_u)$ is an $\alpha$-stable L\'evy process.  Note that if we run the process for $u$ units of quantum natural time, the unexplored region (quantum surface parameterized by the unbounded component) is again a weight $\rho + 4$ wedge and the curve is an $\SLE_{\kappa}(\rho)$ (Theorem~\ref{thm:quantum_natural_time_cutting}). Hence the increments of $(X,Z)$ are independent and so it is a Levy process.

For all $\epsilon > 0$, let $\CE(\epsilon)$ be the set of excursions of $Y$ with time length at least $\epsilon$. For all $t \geq 0$ let $\eta_{t}(\CE(\epsilon))$ be the number of excursions completed by $Y$ by time $t$ which lie in $\CE(\epsilon)$. Let $\nu_{\delta}$ be the It\^o excursion measure of a $\BES^{\delta}$ process.  It follows from \cite[Lemma~6.18]{dms2014mating} that there exists a constant $C_{\delta} > 0$ depending only on $\delta$ such that $\nu_{\delta}(\CE(\epsilon)) = C_{\delta}\epsilon^{\frac{\delta}{2} - 1}$ for all $\epsilon > 0$.  We also have from \cite[Proposition~19.12]{kallenberg1997foundations} that $L_{t} = \lim_{\epsilon \to 0}\frac{\eta_{t}(\CE(\epsilon))}{\nu_{\delta}(\CE(\epsilon))}$ for all $t \geq 0$ a.s.  Fix $C \in \R$ and let $\wt{Y}$ be the $\BES^{\delta}$ process encoding $h + C$. Then it holds that $\wt{Y}_{t} = e^{\frac{C\gamma}{2}}Y_{e^{-\gamma C}t}$ for all $t \geq 0$. Let $\wt{L},\wt{q},\wt{\eta}_{t}$ be the corresponding quantities for $\wt{Y}$. Then $\wt{\eta}_{t}(\CE(\epsilon)) = \eta_{e^{-\gamma C}t}(\CE(\epsilon e^{-\gamma C}))$ and 
\begin{align*}
\frac{\nu_{\delta}(\CE(\epsilon))}{\nu_{\delta}(\CE(\epsilon e^{-\gamma C}))} = \frac{C_{\delta}\epsilon^{\frac{\delta}{2}-1}}{C_{\delta}\epsilon^{\frac{\delta}{2} - 1}e^{\gamma C (1 - \frac{\delta}{2})}} = e^{\gamma C (\frac{\delta}{2} - 1)}
\end{align*}
 and hence 
\begin{align*} 
 \frac{\wt{\eta}_{t}(\CE(\epsilon))}{\nu_{\delta}(\CE(\epsilon))} = \frac{\eta_{e^{-\gamma C}t}(\CE(\epsilon e^{-\gamma C}))}{\nu_{\delta}(\CE(\epsilon e^{-\gamma C}))}e^{\gamma C(1 - \frac{\delta}{2})}
\end{align*} 
converges to $e^{\gamma C(1 - \frac{\delta}{2})}L_t$ as $\epsilon \to 0$ for all $t \geq 0$ a.s.  Thus $\wt{L}_{t} = e^{\gamma C(1 - \frac{\delta}{2})}L_t$ for all $t \geq 0$ a.s. Moreover $\wt{x}(t) = \sum_{j \in A(t)}\wt{t}_{j} = e^{\gamma C}\sum_{j \in A(t)}t_{j}$ and so $\wt{L}_{\Tilde{x}(t)} = e^{\gamma C(1 - \frac{\delta}{2})}L_{e^{-\gamma C}\wt{x}(t)} = e^{\gamma C(1 - \frac{\delta}{2})}L_{x(t)}$ for all $t \geq 0$ a.s. Therefore we have that $\wt{q}_{u} = \qnt_{e^{\gamma C(\frac{\delta}{2} - 1)}u}$ for all $u \geq 0$ a.s. 

Let $(\wt{X},\wt{Z})$ be the process describing the change in the boundary length relative to time zero corresponding to the field $h + C$. Since the quantum length scales by $e^{\frac{\gamma C}{2}}$ when we add $C$ to the field, we obtain that 
\begin{align*}
(\wt{X}_{u},\wt{Z}_{u}) = \left(e^{\frac{\gamma C}{2}}X_{e^{\gamma C(\frac{\delta}{2} - 1)}u},e^{\frac{\gamma C}{2}}Z_{e^{\gamma C(\frac{\delta}{2} - 1)}u}\right)
\end{align*}
for all $u \geq 0$.  Set $\mu = e^{\gamma C(\frac{\delta}{2} - 1)}$. Then $\mu^{-\frac{1}{\alpha}} = e^{\frac{\gamma C}{2}}$ and so $(\wt{X}_{u},\wt{Z}_{u}) = (\mu^{-\frac{1}{\alpha}}X_{\mu u},\mu^{-\frac{1}{\alpha}}Z_{\mu u})$
for all $u \geq 0$. The claim of the theorem then follows since $(h,\eta)$ and $(h + C,\eta)$ have the same law for all $C \in \R$ (\cite[Proposition~4.8]{dms2014mating}) and $(\wt{X},\wt{Z})$ is determined by $(h + C,\eta)$.

We note that $(X,Z)$ makes a jump at time $u$ if and only if the right continuous inverse $T$ of $L$ makes a jump at time $u$. Note also that there is a correspondence between the discontinuities of $T$ and the excursions $(e_{j})_{j \in \N}$ made by $Y$ away from zero. Let $(h_{j})_{j \in \N}$ be the corresponding sequence of quantum surfaces. We also observe that when $(X,Z)$ makes a jump, both of $X$ and $Z$ make a jump with positive and negative sign respectively. The jump sizes of $X$ and $-Z$ are equal to the quantum lengths of $\R$ and $\R \times \{\pi\}$ respectively under the corresponding surface $(\strip,h_{j},-\infty,+\infty)$, where the opening (resp.\ closing) point of the pocket made by $\eta$ which corresponds to the excursion $e_j$ is mapped to $-\infty$ (resp.\ $+\infty$).

We are now ready to describe the law of the boundary length evolution in the case where $\rho = \kappa - 4$ and $\kappa \in (\frac{4}{3},2) $.
Since $\rho = \kappa - 4$, we have that the corresponding surfaces have the law of i.i.d. quantum disks. By the resampling property characterizing the two marked points of a unit boundary length quantum disk (\cite[Proposition~A.8]{dms2014mating}) and the Poissonian structure of the sequence of their quantum boundary lengths (Theorem~\ref{thm:law_of_quantum_lengths}), we obtain that the sequence of jumps of $(X,-Z)$ can be sampled as follows:
\begin{enumerate}[(i)]
\item Firstly, we sample a $\text{p.p.p.}$ $\Lambda = \{(s_{j},t_{j})\}_{j \in \N}$ on $\R_+ \times \R_+$ with intensity measure $c(du \times t^{\alpha}dt)$ where $\alpha$ and $c$ are  as in Theorem~\ref{thm:law_of_quantum_lengths}. In our case, we have that $\alpha = -\frac{4}{\kappa}$.
\item Then we sample independently a sequence of i.i.d. random variables with the uniform law on $[0,1]$, $\{U_{j}\}_{j \in \N}$ and we consider 
\begin{align*}
\Lambda^* = \{(t_{j}u_{j} , (1 - u_{j})t_{j}) : (s_{j},t_{j}) \in \Lambda \}_{j \in \N}
\end{align*}
\end{enumerate}
Note that $\wt{\Lambda} = \{(s_{j},t_{j},u_{j})\}_{j \in \N}$ is a $\text{p.p.p.}$ on $\R_+ \times \R_+ \times [0,1]$ with intensity measure given by $\nu = c(du \times t^{\alpha}dt) \times \mu$, where  $\mu$ is the law of a uniform random variable on $[0,1]$. Consider the function $F \colon \R_+ \times \R_+ \times [0,1] \to  \R_+ \times \R_+$ with $F(s,t,u) = (tu,t(1 - u))$. Then $\Lambda^*$ is a $\text{p.p.p.}$ on $ \R_+ \times  \R_+$ with intensity measure given by $\nu_{*}F$. Set $\wt{\nu}(dx,dy) = c(x + y)^{\alpha - 1}dxdy$ to be defined on $\R_+ \times \R_+$. It is easy then to check that $\nu_{*}F([x_{1},x_{2}] \times [y_{1},y_{2}]) = \wt{\nu}([x_{1},x_{2}] \times [y_{1},y_{2}])$ for all $0 < x_{1} < x_{2}$ and $0 < y_{1} < y_{2}$. Therefore we have that $\nu_{*}F = \wt{\nu}$ and so we have described the law of $(X,-Z)$ since it is determined by the law of the jumps.  This completes the proof of the theorem.
\end{proof}

Now that we have completed the proof of Theorem~\ref{thm:boundary_length_evolution},  we are ready to prove Corollary~\ref{cor:dim_of_boundary_intersection}.

\begin{proof}[Proof of Corollary~\ref{cor:dim_of_boundary_intersection}.]
Suppose that we have the setup of Theorems~\ref{thm:quantum_natural_time_cutting} and~\ref{thm:boundary_length_evolution} where the $\SLE_{\kappa}(\rho)$ process $\eta$ is drawn on top of an independent quantum wedge $\CW = (\h,h,0,\infty)$ of weight $\rho + 4$ and we parameterize $\eta$ by quantum natural time with respect to $\CW$.  Let $(X,Z)$ be the pair encoding the change of the quantum lengths of the left and right outer boundaries as in Theorem~\ref{thm:boundary_length_evolution}.  Then Theorem~\ref{thm:boundary_length_evolution} implies that $(X,-Z)$ has the law of an $\alpha$-stable L{\'e}vy process where $\alpha$ satisfies~\eqref{eqn:alpha_value}.  We parameterize $\R_+$ according to quantum length with respect to $h$ and set $I_t = \inf_{s \in [0,t]} Z_s$,  $S_t = \sup_{s \in [0,t]}(-Z_s)$ for all $t \geq 0$.  Let $L = (L_t)$ be the local time of $Z-I$ at $0$ which is the same as the local time of $S+Z$ at $0$.  Let also $L^{-1}$ be the right-continuous inverse of $L$.  Then,  it follows from \cite[Chapter~VIII]{bertoin1996levy} that $L^{-1}$ is a stable subordinator with index $1 - \frac{1}{\alpha}$.  Note that the ladder height process is defined by $H_t = S_{L_t^{-1}}$ for all $t \geq 0$ (see \cite[Chapter~VI]{bertoin1996levy}).  Moreover \cite[Lemma~1,  Chapter~VIII]{bertoin1996levy} implies that $H$ is a L{\'e}vy stable process with index $\alpha-1 \in (0,1)$ and hence it is an $(\alpha-1)$-stable subordinator.  We note that $Z$ achieves a record minimum at time $t$,  i.e.,  $Z_t = I_t$,  if and only if $x = \eta(t) \in \R_+$ and then $x = -Z_t$.  It follows that $\{\nu_h([0,x]) : x \in \eta \cap \R_+\} = \overline{\{H_t : t \geq 0\}}$.  We also note that $H$ has Laplace exponent $\Phi$ given by $\Phi(\lambda) = \lambda^{\alpha-1}$ for all $\lambda > 0$.  Therefore,  combining with \cite[Theorem~15,  Chapter~III]{bertoin1996levy},  we obtain that 
\begin{equation}\label{eqn:hausdorff_dim_equality}
\text{dim}_{\mathcal{H}}(\{H_t : t \geq 0\}) = \text{dim}_{\mathcal{H}}(\{\nu_h([0,x]) : x \in \eta \cap \R_+\}) = \alpha-1 = -\frac{2(\rho+2)}{\kappa}\,\,\text{a.s.}
\end{equation}
where $\text{dim}_{\mathcal{H}}$ denotes Hausdorff dimension.  We note that it follows from \cite[Theorem~4.1]{rhodes2008kpz} that if $\wt{h}$ is a free boundary $\text{GFF}$ on $\h$ and $K \subseteq [0,\infty)$ is a deterministic set,  then if $\wh{K} = \{\nu_{\wt{h}}([0,x]) : x \in K\}$,  it holds that
\begin{equation}\label{eqn:kpz_formula_equation}
\text{dim}_{\mathcal{H}}(K) = \left(1+\frac{\gamma^2}{4}\right) \text{dim}_{\mathcal{H}}(\wh{K}) -\frac{\gamma^2}{4}\text{dim}_{\mathcal{H}}(\wh{K})^2\,\,\text{a.s.}
\end{equation}
Note that if we consider $\CW$ with the circle average embedding,  then the law of the restriction of $h$ to any subdomain of $\h$ which is bounded away from $0,\infty$ and $\h \cap \partial \D$ is mutually absolutely continuous with respect to the law of the corresponding restriction of a free boundary $\text{GFF}$ on $\h$ normalized to have average zero on $\h \cap \partial \D$.  Since $\eta$ is independent of $h$,  we obtain that~\eqref{eqn:kpz_formula_equation} holds a.s.\  when $K$ is replaced by $\eta \cap \R_+$ and $\wt{h}$ is replaced by $h$.  Then,  \eqref{eqn:dim_of_intersection_with_R_+} follows by combining with~\eqref{eqn:hausdorff_dim_equality}.  This completes the proof of the corollary.
\end{proof}

\bibliographystyle{abbrv}
\bibliography{biblio}

\end{document}